\newtheorem{thm}{Theorem}[chapter]
\newtheorem{cor}[thm]{Corollary}
\newtheorem{lemma}[thm]{Lemma}
\newtheorem{prop}[thm]{Proposition}
\newtheorem{notation}[thm]{Notation}
\newtheorem{example}[thm]{Example}
\theoremstyle{remark}
\newtheorem{remark}[thm]{Remark}
\newtheorem{note}[thm]{Note}
\newtheorem{questions}[thm]{Questions}
\theoremstyle{definition}
\newtheorem{dfn}[thm]{Definition}
\newcommand{\C}{\mathbb{C}}
\newcommand{\M}{\mathcal{M}}\newcommand{\N}{\mathbb{N}}
\newcommand{\p}{\mathbb{P}}\newcommand{\Q}{\mathbb{Q}}
\newcommand{\R}{\mathbb{R}}\newcommand{\s}{\mathbb{S}}
\newcommand{\T}{\mathbb{T}}\newcommand{\Z}{\mathbb{Z}}
\newcommand{\LL}{\mathcal{LL}}
\renewcommand{\H}{\mathcal{H}}
\renewcommand{\O}{\mathcal{O}}
\newcommand{\ol}{\overline}\newcommand{\tr}{\mbox{tr}}
\newcommand{\ind}{\mbox{ind}}
\author{Brian Paul Katz}                           
\title{Tropical Hurwitz Spaces}                
\address{1129 38$^{th}$ St\\ Rock Island, IL 61201}  
\newcommand{\latexe}{{\LaTeX\kern.125em2%
                      \lower.5ex\hbox{$\varepsilon$}}}
\chardef\bslchar=`\\ 
\def\square{\RIfM@\bgroup\else$\bgroup\aftergroup$\fi
  \vcenter{\hrule\hbox{\vrule\@height.6em\kern.6em\vrule}%
                                              \hrule}\egrou0}
\begin{document}

\copyrightpage                   

\signaturepage                   

\titlepage                              

\begin{dedication}              
\index{Dedication@\emph{Dedication}}%
I would like to dedicate this thesis to my dear friends, without whom this would not have been possible.
\end{dedication}

\begin{acknowledgments}
\index{Acknowledgments@\emph{Acknowledgments}}%
I would like to thank David Helm, Michael Starbird, and Eric Katz for many helpful conversations and their generous application of time, patience, and encouragement. I especially thank Eric for the origins of many of the ideas below, David for the guidance to make this document thorough and rigorous, and Mike for determination to see it to completion.
\end{acknowledgments}

\utabstract       
\index{Abstract}
                  
Hurwitz numbers are a weighted count of degree $d$ ramified covers of curves with specified ramification profiles at marked points on the codomain curve. Isomorphism classes of these covers can be included as a dense open set in a moduli space, called a Hurwitz space. The Hurwitz space has a forgetful morphism to the moduli space of marked, stable curves, and the degree of this morphism encodes the Hurwitz numbers.

Mikhalkin has constructed a moduli space of tropical marked, stable curves, and this space is a tropical variety. In this paper, I construct a tropical analogue of the Hurwitz space in the sense that it is a connected, polyhedral complex with a morphism to the tropical moduli space of curves such that the degree of the morphism encodes the Hurwitz numbers.

\tableofcontents       



\chapter{Introduction}
\textit{This document combines Hurwitz numbers from classical enumerative geometry and the moduli space of curves from tropical geometry. This introduction establishes the basic definitions in each domain and frames them for the work to come. Then it states the original results and motivates the tools that will be used to prove them.}

\section{Hurwitz Numbers}
\textit{In this section, we define Hurwitz numbers and show how they can be computed in the class algebra.}

\subsection{Counting Ramified Covers}
\textit{In this subsection, we define Hurwitz numbers in enumerative geometry. For a fixed ramification profile, the Hurwitz number will be a weighted count of ramified covers with that profile.}

\begin{dfn}\cite{GJ}
Fix $d \in \N$. Then a \textbf{ramified cover of degree $d$} is a morphism $f: D \to C$, where $C$ and $D$ are compact curves over $\C$, such that $|f^{-1}(Q)| = d$ for all but a finite number of points $Q$ in $C$. The points in $C$ that do not have $d$ preimages are called \textbf{ramification values} or \textbf{branch points} of $f$. Two ramified covers $f: D \to C$ and $f': D' \to C'$ are isomorphic if there are isomorphisms $d: D \to D'$ and $c: C \to C'$ such that $f' \circ d = c \circ f$.
\end{dfn}

\begin{remark}
We will require that $C$ be connected below, but $D$ need not be. 
\end{remark}

\begin{lemma}
Let $f: D \to C$ be a ramified cover of degree $d$ and $P \in D$. Then there exists $m_P \in \N$ such that $f$ is locally isomorphic to the map $z \mapsto z^{m_P}$. The integer $m_P$ is called the \textbf{ramification index} of $f$ at $P$.
\end{lemma}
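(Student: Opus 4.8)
The plan is to reduce the assertion to a purely local statement about a single holomorphic function of one complex variable and then to absorb its leading behavior into a holomorphic change of coordinate on the source. First I would interpret the morphism $f$ as a nonconstant holomorphic map between Riemann surfaces and pass to charts: choose holomorphic coordinates $\phi$ centered at $P$ and $\psi$ centered at $f(P)$, with $\phi(P)=0$ and $\psi(f(P))=0$, shrinking the neighborhoods so that $f$ carries the domain of $\phi$ into the domain of $\psi$. Setting $g = \psi \circ f \circ \phi^{-1}$ gives a holomorphic function defined near $0 \in \C$ with $g(0)=0$. Since $f$ has positive degree it is nonconstant, so $g \not\equiv 0$.

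Next I would use the local structure of holomorphic functions. Because $g \not\equiv 0$ and $g(0) = 0$, the function $g$ has a zero of some finite order $m \geq 1$ at the origin, and I would \emph{define} $m_P := m$. Factoring out this zero yields $g(z) = z^m h(z)$ with $h$ holomorphic near $0$ and $h(0) \neq 0$. The key step is then to extract a holomorphic $m$-th root of $h$: since $h$ is nonvanishing on a small disk, which is simply connected, a branch of $\log h$ and hence of $h^{1/m}$ exists there, so I may form $k(z) = z \cdot h(z)^{1/m}$, a holomorphic function satisfying $k(z)^m = g(z)$.

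Finally I would promote $k$ to a new coordinate. Computing $k(0)=0$ and $k'(0) = h(0)^{1/m} \neq 0$, the holomorphic inverse function theorem shows that $k$ is a biholomorphism of a possibly smaller neighborhood of $0$ onto its image, so $w = k(z)$ is a legitimate holomorphic coordinate centered at $P$. In the coordinates $w$ on the source and $\psi$ on the target, $f$ is expressed by $w \mapsto k(z)^m = w^m$, which is exactly the desired local form $z \mapsto z^{m_P}$. I would close by remarking that $m_P$ is well defined independently of the chosen charts, since any two local representatives of $f$ differ by pre- and post-composition with biholomorphisms, and biholomorphisms preserve the order of vanishing.

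I expect the main obstacle to be the construction of the holomorphic $m$-th root, which is where the one-dimensional and complex-analytic hypotheses genuinely enter: it relies on $h(0) \neq 0$ to guarantee nonvanishing on a neighborhood and on shrinking to a simply connected domain so that $\log h$, and hence $h^{1/m}$, is single valued. Everything else is a routine application of the Taylor expansion and the inverse function theorem.
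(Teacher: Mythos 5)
Your argument is correct and complete, but it is worth noting that the paper does not actually prove this lemma: it simply cites Proposition IV.2.2 of Hartshorne, deferring to an algebro-geometric treatment in which the ramification index is read off from the pullback of a uniformizer (equivalently, the valuation of $f^\sharp t$ in the discrete valuation ring $\O_{D,P}$). Your route is the standard complex-analytic one (as in Miranda or Forster): pass to charts, write the local representative as $g(z) = z^m h(z)$ with $h(0) \neq 0$, extract a single-valued $m$-th root of $h$ on a simply connected neighborhood, and use the inverse function theorem to promote $k(z) = z\,h(z)^{1/m}$ to a coordinate in which $f$ becomes $w \mapsto w^m$. What your approach buys is a self-contained, elementary proof that also makes the geometric meaning of $m_P$ (local degree, winding number) transparent, and you correctly handle the one genuinely delicate point, namely that $m_P$ is independent of the charts because biholomorphisms preserve order of vanishing. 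What the algebraic citation buys is uniformity: it works over any algebraically closed field and does not need the topology of $\C$. The only hypothesis you are using implicitly is smoothness of $D$ at $P$ (needed to have a holomorphic chart at all), which is consistent with the paper's standing assumptions on the covers being counted.
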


\begin{proof}
This is proved by Proposition IV.2.2 of Hartshorne, \cite{Hartshorne}.
\end{proof}

The given definition of a ramified cover requires that any point that is \textit{not} a branch point has $d$ distinct preimages. The next lemma uses the ramification indices to extend this count to \textit{all} values in the codomain, $C$.

\begin{lemma}\label{ramificationpartition}
Let $f: D \to C$ be a ramified cover of degree $d$ and fix $Q \in C$. Then
$$\sum_{P \in f^{-1}(Q)} m_P = d.$$
\end{lemma}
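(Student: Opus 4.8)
The plan is to prove this by a conservation-of-fiber-cardinality argument: I will compare the weighted count over the special point $Q$ with the genuine count over a nearby unramified point, and show the two agree because no preimages can escape from a fixed neighborhood. First I would record that since $C$ and $D$ are compact, $f$ is a proper morphism with finite fibers, so $f^{-1}(Q) = \{P_1, \dots, P_k\}$ is a finite set with ramification indices $m_1, \dots, m_k$. By the preceding lemma, about each $P_i$ I may choose a coordinate disk $V_i$ on which $f$ is isomorphic to $z \mapsto z^{m_i}$, carrying $V_i$ onto a disk $W_i$ centered at $Q$; I arrange the $V_i$ to be pairwise disjoint.

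The technical heart of the argument is to confine the entire fiber of a nearby point to the union $\bigcup_i V_i$. For this I would set $K = D \setminus \bigcup_i V_i$, which is closed in the compact space $D$ and hence compact, so $f(K)$ is closed in $C$ and does not contain $Q$. Its complement therefore contains an open neighborhood $U \subseteq \bigcap_i W_i$ of $Q$ with the crucial property $f^{-1}(U) \subseteq \bigcup_i V_i$; in other words, every preimage of every point of $U$ already lies in one of the coordinate disks. This properness step is exactly where the compactness of $D$ is indispensable, and I expect it to be the main obstacle to state cleanly: without it, preimages could ``leak'' out of the $V_i$ as the target point moves, and the count would not be conserved.

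Finally I would carry out the count. Since the branch points are finite in number, I may pick $Q' \in U$ with $Q' \neq Q$ and $Q'$ not a branch point, so that $|f^{-1}(Q')| = d$ by the definition of the degree. By the containment above, $f^{-1}(Q')$ is the disjoint union of the sets $f^{-1}(Q') \cap V_i$. In the local model $z \mapsto z^{m_i}$, the preimages of the nonzero value corresponding to $Q'$ are its $m_i$ distinct $m_i$-th roots, so $|f^{-1}(Q') \cap V_i| = m_i$. Summing over $i$ then gives $d = |f^{-1}(Q')| = \sum_{i=1}^{k} m_i = \sum_{P \in f^{-1}(Q)} m_P$, as desired. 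In the degenerate case where $Q$ is itself unramified, the statement reduces to the defining property of the degree with every $m_P = 1$.
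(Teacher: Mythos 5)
Your argument is correct, but it takes a genuinely different route from the paper, which offers no self-contained proof at all and instead cites Proposition II.6.9 of Hartshorne. That reference proves the statement algebraically: the fiber over $Q$ is the pullback divisor $f^*Q$, whose degree is computed as a sum of valuations $v_P(f^\# t)$ of a local uniformizer $t$ at $Q$, and this equals $d = [K(D):K(C)]$ by the theory of finite morphisms of curves. Your proof is instead topological and complex-analytic: you use the local normal form $z \mapsto z^{m_P}$ from the preceding lemma, a properness argument (the image of the compact set $K = D \setminus \bigcup_i V_i$ is closed and misses $Q$, so fibers over a small neighborhood $U$ of $Q$ cannot leak outside the coordinate disks), and then a count of preimages of a nearby unramified point $Q' \in U$. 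You have correctly identified the one genuinely delicate step --- confining $f^{-1}(U)$ to $\bigcup_i V_i$ --- and handled it properly; without it the count $\sum_i m_i = |f^{-1}(Q')| = d$ would not be justified. What each approach buys: the algebraic route is shorter given the machinery of divisors and works over any algebraically closed field, while your argument is elementary, self-contained modulo the local normal form, and makes transparent \emph{why} the weighted count is conserved (preimages of $Q$ split into $m_P$ nearby sheets as the target point moves off $Q$). Both are standard and both are complete.
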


\begin{proof}
This is proved by Proposition II.6.9 of Hartshorne, \cite{Hartshorne}.
\end{proof}

\begin{dfn}
Let $f: D \to C$ be a ramified cover of degree $d$ and $Q \in C$. Then, by lemma \ref{ramificationpartition}, the ramification indices at points $P \in f^{-1}(Q)$ form an integer partition of $d$. This integer partition will be denoted by $\sigma(Q)$.
\end{dfn}

\begin{notation}
Let $\sigma$ be an integer partition of $d$. Let $n_i$ be the number of parts of this integer partition of size $i$. Then we can represent this data as $\sigma = (1^{n_1} 2^{n_2} \cdots d^{n_d})$. There is no information lost by dropping those $i$ with $n_i = 0$ from this notation.
\end{notation}

\begin{example}\label{trivialram}
Let $f : D \to C$ be a ramified cover of degree $d$. By definition, all but a finite number of points in $C$ have $d$ distinct preimages. Let $Q$ be one of the points with $d$ distinct preimages, and let $\{\hat{Q}_1,\ldots,\hat{Q}_d\}$ be those preimages. Lemma \ref{ramificationpartition} implies that the ramification indices are all positive integers summing to $d$, so $m_{\hat{Q}_i} = 1$ for all $i$. Thus $\sigma(Q) = (1^d)$ for all but a finite number of points $Q \in C$.
\end{example}

\begin{dfn}\cite{CJM}
Let $d$ be a natural number and fix $n$ points, $Q_1,\ldots,Q_n$, on $\p^1$ and $\overline{\sigma} = \{\sigma_1, \ldots, \sigma_n\}$ a collection of integer partitions of $d$. Then a \textbf{Hurwitz number}, $h(\overline{\sigma})$, is defined as a weighted count of (isomorphism classes of)
\begin{center}
degree $d$ ramified covers, $f: D \to \p^1$ such that:
\begin{itemize}
\item $D$ is a smooth curve;
\item $f$ is unramified over $\p^1 \setminus \{Q_1,\ldots,Q_n\}$; and
\item $f$ ramifies with profile $\sigma(Q_i) = \sigma_i$.
\end{itemize}
Each cover $f$ is counted with weight $\frac{1}{|Aut(f)|}$.
\end{center}
\end{dfn}

The definition above of Hurwitz numbers is a little different from the usual one. Traditionally, the genus of $D$ is specified and included in the notation. This allows for a partial collection of ramification profiles to be specified, while an unknown number of points with profile $(1^{d-2}2^1)$ are left unspecified. If you know the ramification profiles, then you can read off the degree of the covers. Moreover, from the ramification profiles, you can compute the genus, $g$, using the following famous result. The two definitions are the same except for some notation. We will assume all ramification is listed in the profiles, so we gave that version of the definition.

\begin{dfn}
Let $\sigma = (1^{n_1} 2^{n_2} \cdots d^{n_d})$ be an integer partition of $d$. Define $\ell(\sigma) = \sum_i n_i$, the number of parts in the integer partition $\sigma$. Also define $r(\sigma) = d - \ell(\sigma)$.
\end{dfn}

\begin{note}
By Example \ref{trivialram}, for all but a finite number of points $Q \in C$, $r(\sigma(Q)) = r(1^d) =  d - \ell(1^d) = d-d = 0$.
\end{note}

\begin{thm}[Riemann-Hurwitz Formula]
Let $f: D \to C$ be a ramified cover of degree $d$. Let $g(C)$ and $g(D)$ be the genera of these curves. Then
$$2 - 2g(D) = d(2-2g(C)) - \sum_{Q \in C} r(\sigma(Q)).$$
\end{thm}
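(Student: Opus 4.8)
The plan is to prove this by comparing the topological Euler characteristics of $D$ and $C$ through a triangulation, using the fact that for a smooth compact complex curve (equivalently, a closed orientable surface) $X$ of genus $g(X)$ one has $\chi(X) = 2 - 2g(X)$. Since, after this substitution, the asserted formula is exactly $\chi(D) = d\,\chi(C) - \sum_{Q} r(\sigma(Q))$, it suffices to establish that relation between the two Euler characteristics.

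First I would choose a triangulation of $C$ with the property that every branch point of $f$ occurs as a vertex. Since there are only finitely many branch points, such a triangulation exists; let it have $V$ vertices, $E$ edges, and $F$ faces, so that $V - E + F = \chi(C)$. The next step is to pull this triangulation back along $f$ to produce a triangulation of $D$. Over the complement of the branch values the map $f$ is an honest $d$-sheeted covering, so each open edge and each open face lifts to exactly $d$ disjoint copies in $D$; the lifted triangulation therefore has $E' = dE$ edges and $F' = dF$ faces.

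The only count that differs is the vertex count, and this is where Lemma \ref{ramificationpartition} enters. For a vertex $Q$ of $C$, the number of vertices of $D$ lying over it is exactly $|f^{-1}(Q)| = \ell(\sigma(Q))$, which equals $d - r(\sigma(Q))$ by definition of $r$. Summing over all vertices gives $V' = \sum_{Q} \ell(\sigma(Q)) = dV - \sum_{Q} r(\sigma(Q))$, where the sum may be taken over all points of $C$ since $r(\sigma(Q)) = 0$ whenever $Q$ is not a branch point. Assembling the three counts yields $\chi(D) = V' - E' + F' = d(V - E + F) - \sum_Q r(\sigma(Q)) = d\,\chi(C) - \sum_Q r(\sigma(Q))$, which is the desired identity.

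The main obstacle is justifying that the triangulation lifts cleanly, in particular that each face and edge over the unramified locus splits into exactly $d$ pieces while the vertices over a branch point merge according to the ramification indices. This is controlled by the local model from the preceding lemma: near a point $P$ of ramification index $m_P$ the map looks like $z \mapsto z^{m_P}$, so $m_P$ sheets of a small disk about $Q = f(P)$ come together at the single point $P$, which both confirms that $P$ contributes one lifted vertex and is consistent with $\sum_{P \in f^{-1}(Q)} m_P = d$. Once this local picture is in hand, the edge and face counts away from the branch locus are routine covering-space bookkeeping. I would remark in passing that the same result follows algebraically by writing $K_D = f^* K_C + R$ with ramification divisor $R = \sum_P (m_P - 1)P$ and taking degrees; there the identity $\deg R = \sum_P (m_P - 1) = \sum_Q r(\sigma(Q))$ is again precisely Lemma \ref{ramificationpartition}, so the two proofs agree on where the essential input lies.
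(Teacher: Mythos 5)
Your argument is correct, but it is not the route the paper takes: the paper offers no proof of its own and simply cites Hartshorne (Corollary IV.2.4), which is the algebraic proof via the relation $K_D = f^*K_C + R$ between canonical divisors, with $R = \sum_P (m_P - 1)P$ the ramification divisor. You instead give the classical topological proof over $\C$: triangulate $C$ with the branch values among the vertices, lift the triangulation, and compare Euler characteristics. Your bookkeeping is right, and the key identification $|f^{-1}(Q)| = \ell(\sigma(Q)) = d - r(\sigma(Q))$ is exactly the paper's definition of $r$ combined with its Lemma on ramification partitions, so the correction term $\sum_Q r(\sigma(Q))$ appears for a transparent reason (missing vertices upstairs). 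What each approach buys: yours is elementary, self-contained, and makes the geometric content of the defect term visible, but it is intrinsically tied to the complex-analytic/topological picture; the divisor-theoretic proof the paper cites works over an arbitrary algebraically closed field (with tameness hypotheses) and fits the scheme-theoretic framework the paper otherwise leans on. You already note this equivalence in your closing remark, correctly locating the common essential input in $\sum_P (m_P - 1) = r(\sigma(Q))$ over each $Q$. One small caveat worth flagging: the paper explicitly allows $D$ to be disconnected, in which case $\chi(D) = 2 - 2g(D)$ requires an appropriate convention for the ``genus'' of a disconnected curve; your Euler-characteristic identity $\chi(D) = d\,\chi(C) - \sum_Q r(\sigma(Q))$ holds regardless, so only the final translation into genera needs that convention, and the same issue already afflicts the statement as written.
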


\begin{proof}
This is proved by Corollary 2.4 of Hartshorne, \cite{Hartshorne}.
\end{proof}

\subsection{Counting Covering Spaces}
\textit{Hurwitz numbers count ramified covers. Instead, they can be interpreted as counting related covering space maps, using the Riemann Extension Theorem.}

\begin{lemma}
Let $f : D \to \p^1$ be a ramified cover of degree $d$, and let $Q_1, \ldots, Q_n$ be the ramification values of $f$. Then 
$$f : D \setminus f^{-1}(\{Q_1, \ldots, Q_n\}) \to \p^1 \setminus \{Q_1, \ldots, Q_n\}$$
is a covering space map.
\end{lemma}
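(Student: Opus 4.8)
The plan is to verify directly that the restricted map satisfies the defining property of a covering space map: that every point of the base has an evenly covered open neighborhood. Write $U' = \p^1 \setminus \{Q_1, \ldots, Q_n\}$ and $D' = D \setminus f^{-1}(\{Q_1, \ldots, Q_n\}) = f^{-1}(U')$, and fix a point $Q \in U'$. Since $Q$ is by hypothesis not a branch point, it has exactly $d$ distinct preimages $\hat{Q}_1, \ldots, \hat{Q}_d$. The preceding lemmas pin down the local model at each preimage: by Lemma \ref{ramificationpartition} the ramification indices $m_{\hat{Q}_i}$ are positive integers summing to $d$, and since there are $d$ of them each must equal $1$, exactly as in Example \ref{trivialram}. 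Hence near each $\hat{Q}_i$ the map $f$ is locally isomorphic to $z \mapsto z^1 = z$, i.e. a local homeomorphism.

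First I would separate the sheets. Because the $\hat{Q}_i$ are finitely many distinct points in the Hausdorff space $D$, I can choose pairwise disjoint open neighborhoods $V_i \ni \hat{Q}_i$, and after shrinking them using the local normal form I may assume each restriction $f|_{V_i} : V_i \to W_i := f(V_i)$ is a homeomorphism onto an open subset $W_i \subseteq U'$. Setting $W = \bigcap_{i=1}^d W_i$ gives an open neighborhood of $Q$ over which each $V_i$ supplies one homeomorphic sheet; what remains is to guarantee that these are \emph{all} the sheets.

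The crux is to produce an open $U$ with $Q \in U \subseteq W$ such that $f^{-1}(U) \subseteq \bigcup_i V_i$, so that no further preimages intrude. This is exactly where the compactness of $D$ (equivalently, the properness of the finite morphism $f$) is essential, and I expect it to be the main obstacle, since the local normal form by itself controls $f$ only near the chosen preimages. I would argue by contradiction: if no such $U$ existed, then along a neighborhood basis $U_k \downarrow Q$ inside $W$ there would be points $P_k \in f^{-1}(U_k) \setminus \bigcup_i V_i$; compactness of $D$ yields a convergent subsequence $P_k \to P$, and continuity forces $f(P) = Q$, so $P = \hat{Q}_j$ for some $j$. But then $P_k \in V_j$ for all large $k$, contradicting the choice of the $P_k$.

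With such a $U$ in hand, $f^{-1}(U) = \bigsqcup_{i=1}^d (V_i \cap f^{-1}(U))$ is a disjoint union of open sets each carried homeomorphically onto $U$ by $f$, so $U$ is evenly covered. Finally, surjectivity onto $U'$ is immediate, since every $Q \in U'$ has $d \ge 1$ preimages, all of which lie in $D'$. As $Q \in U'$ was arbitrary, $f|_{D'} : D' \to U'$ is a covering space map.
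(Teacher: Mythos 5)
Your proof is correct, but note that the paper offers no proof of this lemma at all --- it is stated bare and the surrounding text immediately moves on to the Riemann Extension Theorem, implicitly treating the statement as standard. What you have written is a complete and careful version of the standard argument that a proper local homeomorphism onto a connected, locally compact Hausdorff base is a covering map, specialized to this setting: the local normal form $z \mapsto z^{m_P}$ together with Lemma \ref{ramificationpartition} forces $m_{\hat{Q}_i} = 1$ at each of the $d$ preimages of an unbranched point (as in Example \ref{trivialram}), so $f$ is a local homeomorphism on $D'$, and you correctly identify that the only genuinely nontrivial step is producing a neighborhood $U$ of $Q$ with $f^{-1}(U) \subseteq \bigcup_i V_i$, which is exactly where compactness of $D$ enters. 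Your sequential-compactness argument for that step is valid (compact complex curves are metrizable, so sequential and topological compactness coincide and a countable neighborhood basis at $Q$ exists), and the final even-covering decomposition and the surjectivity remark are both in order. In short, you have filled in a gap the paper leaves to the reader rather than diverging from an argument the paper actually gives.
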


The Riemann Extension Theorem says that a covering space of a punctured copy of $\p^1$ can be completed to a ramified cover in a unique manner. See \cite{Miranda} by Miranda for a general reference. In the language of Hartshorne I.6 (\cite{Hartshorne}), the morphism $f$ induces a map of the function fields of the codomain and domain curves. Then this map of function fields can be realized as a morphism of smooth, projective curves. The curves in this new version of the morphism are each the unique smooth curve in the birational equivalence class of the original domain and codomain respectively, and the original domain and codomain curves live as (Zariski) open sets in these projective curves. Since curves are dimension $1$, the complements of the original curves are dimension $0$, meaning finite collections of points, as desired.

The automorphisms of the covering space and the ramified cover are the same, coming from the fundamental group of the punctured $\p^1$. So, counting these covering spaces, weighted by their automorphisms, is identical to computing Hurwitz numbers.

\subsection{Counting Monodromy Representations}\label{monodromy}
\textit{Counts of covering maps can be interpreted as counting monodromy representations.}

\begin{notation}
Let $f: D' \to \p^1 \setminus \{Q_1, \ldots, Q_n\}$ be a degree $d$ covering space. Fix a point $x \in \p^1 \setminus \{Q_1, \ldots, Q_n\}$, and fix a labeling of the points in $f^{-1}(x)$ as $\{\tilde{x}_1, \tilde{x}_2, \ldots, \tilde{x}_d\}$. Let $\ell$ be an element of $\pi_1(\p^1 \setminus \{Q_1, \ldots, Q_n\}, x)$, and let $\tilde{\ell}_k$ be the lift of $\ell$ with $\tilde{\ell}_k(0) = \tilde{x}_k$, the starting point of this lift path.
\end{notation}

\begin{dfn}
The \textbf{monodromy representation} of the covering space $f : D' \to \p^1 \setminus \{Q_1, \ldots, Q_n\}$ with basepoint $x$ is the homomorphism
$$\hat{f} : \pi_1(\p^1 \setminus \{Q_1, \ldots, Q_n\}, x) \to S_d$$
satisfying $\tilde{x}_{\hat{f}(\ell)(k)} = \tilde{\ell}_k(1)$, the endpoint of this lift path.
\end{dfn}

\begin{remark}
The previous definition is not clear geometrically. In short, each lift of a loop from the fundamental group gives an oriented path from $\tilde{x}_i$ to $\tilde{x}_j$ for every $i$. This map $i \to j$ specifies a permutation of the $d$ preimages. Alternately, this is the permutation representation on the cosets of $f_*\pi_1(D',\tilde{x})$ in $\pi_1(\p^1 \setminus \{Q_1, \ldots, Q_n\}, x)$.
\end{remark}

By standard covering space results, the covering space can be recovered from the map $\hat{f}$. Notice that any reordering of $\{\tilde{x}_1, \tilde{x}_2, \ldots, \tilde{x}_d\}$ produces the same covering space, so we have over-counted by a factor of $d!$.

\begin{notation}
Fix $\alpha \in S_d$. Then $\alpha$ can be written uniquely as a collection of disjoint cycles so that all of $\{1,\ldots,d\}$ appears in a cycle (up to reordering). An $i$-cycle is a cycle containing exactly $i$ elements from $\{1,\ldots,d\}$. Let $n_i$ be the number of $i$-cycles in this representation.

Notice that the sum of the cycle lengths of $\alpha$ is always $d$, so those lengths form an integer partition of $d$. The \textbf{cycle-type} of the permutation $\alpha$ is the integer partition $\sigma(\alpha) = (1^{n_1}2^{n_2}\cdots d^{n_d})$.
\end{notation}

\begin{dfn}
Let $f: D' \to \p^1 \setminus \{Q_1, \ldots, Q_n\}$ be a covering space and let $g_i$ be the loop in $\pi_1(\p^1 \setminus \{Q_1, \ldots, Q_n\}, x)$ that separates the puncture at $Q_i$ from all of the other punctures such that the puncture at $Q_i$ is on the left-hand side of the loop.
\begin{center}
\psset{xunit=0.8cm,yunit=0.8cm,algebraic=true,dotstyle=*,dotsize=3pt 0,linewidth=0.8pt,arrowsize=3pt 2,arrowinset=0.25}
\begin{pspicture*}(-1.3,-0.8)(4,2.2)
\rput{-132.88}(2.43,0.69){\psellipse(0,0)(1.7,0.56)}
\rput{132.87}(0.07,0.65){\psellipse(0,0)(1.75,0.54)}
\psline{->}(-0.23,0.17)(-0.06,0.01)
\psline{->}(2.32,1.37)(2.17,1.23)
\psdots(1.3,-0.58)
\rput[bl](1.38,-0.46){$x$}
\psdots(2.88,0.92)
\rput[bl](2.86,1.09){$Q_1$}
\rput[bl](1.8,1.46){$g_1$}
\psdots(-0.32,1.04)
\rput[bl](-0.64,1.16){$Q_2$}
\rput[bl](0.2,1.46){$g_2$}
\end{pspicture*}
\end{center}
\end{dfn}

\begin{notation}
Notice that $g_i$ does not have to act transitively on the lifts of $x$. However, the orbits of this action do partition the set of preimages; the sizes of the sets in this (set) partition form an integer partition of $d$. Write $\sigma(Q_i)$ for the integer partition of the action of $g_i$.
\end{notation}

\begin{remark}
The partition $\sigma(Q_i)$ must be the integer partition generated by the cycle-type of the image of $g_i$ in the monodromy representation: $\sigma(Q_i) = \sigma(\hat{f}(g_i))$. In addition, because each $g_i$ goes around one puncture, their product goes around all of them and hence is trivial in the fundamental group of $\p^1$, the sphere.
\end{remark}


\begin{prop}\label{pigens}
Giving $\hat{f}: \pi_1(\p^1 \setminus \{Q_1,\ldots,Q_n\}) \to S_d$ up to conjugacy is equivalent to giving a choice of generators, $\gamma_i$ for $\{\gamma_i | 1 \leq i \leq r\}$ with the single relation $\gamma_1 * \gamma_2 * \cdots * \gamma_n = 1$ such that $\sigma(\gamma_i) = \sigma(Q_i)$. In other words,
$$h(\overline{\sigma}) = \frac{|Hom^{\overline{\sigma}}(\pi_1(\p^1 \setminus \{Q_1,\ldots,Q_n\}), S_d)|}{d!}.$$
\end{prop}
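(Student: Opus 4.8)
The plan is to build the statement on the standard presentation of the fundamental group of the punctured sphere and then convert the resulting bijection into a count via a group action. First I would record the classical presentation
$$\pi_1(\p^1 \setminus \{Q_1, \ldots, Q_n\}, x) = \langle g_1, \ldots, g_n \mid g_1 g_2 \cdots g_n = 1 \rangle,$$
where $g_i$ is the loop around $Q_i$ fixed in the earlier definition. This can be obtained either by van Kampen's theorem applied to a disk decomposition, or by noting that $\p^1$ minus $n$ points deformation retracts onto a wedge of $n-1$ circles, so the group is free of rank $n-1$; the symmetric set of loops $g_1, \ldots, g_n$ then supplies $n$ generators with exactly the one relation coming from the contractibility of the complementary disk around the remaining puncture.

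With this presentation in hand, the universal property of a group presentation gives the first equivalence essentially for free: a homomorphism $\hat{f}$ to $S_d$ is determined by, and can be prescribed arbitrarily by, a choice of images $\gamma_i = \hat{f}(g_i) \in S_d$ subject to the single relation $\gamma_1 \gamma_2 \cdots \gamma_n = 1$. By the earlier remark that $\sigma(Q_i) = \sigma(\hat{f}(g_i))$, requiring the cover to have profile $\sigma_i$ over $Q_i$ is exactly the condition $\sigma(\gamma_i) = \sigma(Q_i)$. This identifies the set $Hom^{\overline{\sigma}}(\pi_1(\p^1 \setminus \{Q_1,\ldots,Q_n\}), S_d)$ with the set of tuples $(\gamma_1, \ldots, \gamma_n)$ of the prescribed cycle types whose product is the identity.

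The numerical formula then follows from an orbit-counting argument. The preceding two subsections identify isomorphism classes of the ramified covers counted by $h(\overline{\sigma})$ with covering spaces of the punctured sphere, and these in turn with monodromy representations taken up to the conjugation action of $S_d$ induced by relabeling the fiber $f^{-1}(x)$. I would make this conjugation action of $S_d$ on $Hom^{\overline{\sigma}}$ explicit and observe that its orbits are precisely the isomorphism classes $[f]$. The key computation is the stabilizer of a representation $\hat{f}$: an element $\tau \in S_d$ fixes $\hat{f}$ under conjugation iff $\tau$ commutes with every $\hat{f}(g_i)$, that is, iff $\tau$ lies in the centralizer of the image of $\hat{f}$; this centralizer is canonically the automorphism group $Aut(f)$ of the cover, since a deck transformation is exactly a fiber permutation commuting with the monodromy. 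By orbit-stabilizer the orbit of $\hat{f}$ has size $d!/|Aut(f)|$, and summing over orbits gives
$$|Hom^{\overline{\sigma}}(\pi_1(\p^1 \setminus \{Q_1,\ldots,Q_n\}), S_d)| = \sum_{[f]} \frac{d!}{|Aut(f)|} = d! \cdot h(\overline{\sigma}),$$
whence the stated identity after dividing by $d!$.

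The main obstacle is this orbit-stabilizer step, not the presentation. I expect to have to argue carefully that (i) the assignment sending a cover to its monodromy representation descends to a genuine bijection between isomorphism classes of covers and $S_d$-conjugacy orbits of $Hom^{\overline{\sigma}}$, in particular that it is well defined and surjective even though $D$ is permitted to be disconnected, so that intransitive representations must be allowed; and (ii) that the centralizer of the image is naturally isomorphic to $Aut(f)$ rather than merely related to it. Both points are where the weighting by $1/|Aut(f)|$ enters, and pinning down the stabilizer identification exactly is what turns the naive count of labeled representations into the weighted Hurwitz number.
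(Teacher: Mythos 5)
Your proposal is correct and follows essentially the same route the paper intends: the paper gives no formal proof of this proposition, but its immediately following remark (that conjugacy classes need not have $d!$ elements and that the stabilizer is in bijection with the automorphisms of the cover) is precisely the orbit--stabilizer argument you carry out in detail, built on the standard presentation $\langle g_1,\ldots,g_n \mid g_1\cdots g_n = 1\rangle$ and the identification of covers with monodromy representations from the preceding subsections. Your explicit attention to disconnected domains (intransitive representations) and to the centralizer being $Aut(f)$ fills in exactly the points the paper leaves implicit.
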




\begin{remark}
There may not always be $d!$ elements in each conjugacy class, but the stabilizer is in bijection with the automorphisms of the covers, giving this simple description. The notation above comes to me from a talk by R. Cavalieri, \cite{CavTalk}.
\end{remark}

\subsection{Computing in the Class Algebra}
\textit{Counts of monodromy representations can be computed from coefficients in expressions in the class algebra. The computation uses the trace on the class algebra.}

\begin{notation}
Fix $d \in \N$ and consider the group ring $\R[S_d]$. Then every element $r \in \R[S_d]$ can be written as an $\R$-linear sum of the elements of $S_d$,
$$r = \sum_{g \in S_d} c_g \cdot g$$
where each $c_g \in \R$.
\end{notation}

\begin{remark}
The field $\R$ could be replaced with $\Q$ or even $\Z$ for our purposes below. We originally chose values in $\R$ because of the possibility of interpreting the coefficients as edge lengths.
\end{remark}

\begin{dfn}
The \textbf{class algebra} is the center of $\R[S_d]$, written $Z(\R[S_d])$.
\end{dfn}

Note that two permutations in $S_d$ have the same cycle-type if and only if they are conjugates. So the set of conjugacy classes can be identified with the set of integer partitions of $d$.

\begin{dfn}\label{basisdfn}
For a fixed integer partition $\sigma$ of $d$, let 
$$K_\sigma = \sum_{\sigma(g) = \sigma} 1 \cdot g.$$
Through a slight abuse of notation, we can think of each element of $S_d$ as an element of $\R[S_d]$. In this parlance, $K_\sigma$ is the sum of the permutations with cycle-type $\sigma$.
\end{dfn}

\begin{lemma}\label{basis}
The class algebra, $Z(\R[S_d])$, has as a (vector space) basis $$\{K_\sigma \mid \sigma \mbox{ an integer partition of } d\}.$$
\end{lemma}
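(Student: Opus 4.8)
The plan is to prove the two defining properties of a basis separately: that the class sums $K_\sigma$ span the center, and that they are linearly independent. Both follow from a single observation, namely that an element of $\R[S_d]$ lies in the center exactly when its coefficient function is constant on conjugacy classes.

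First I would establish the spanning property. I would take an arbitrary element $r = \sum_{g \in S_d} c_g \cdot g$ and ask when it is central, i.e.\ when $hrh^{-1} = r$ for every $h \in S_d$. Expanding and reindexing the sum gives
$$hrh^{-1} = \sum_{g \in S_d} c_g \cdot hgh^{-1} = \sum_{g' \in S_d} c_{h^{-1}g'h} \cdot g',$$
so comparing the coefficient of each group element $g'$ shows that $r$ is central if and only if $c_g = c_{hgh^{-1}}$ for all $g, h \in S_d$. This says precisely that $g \mapsto c_g$ is constant on each conjugacy class. Since the observation recorded just before the lemma identifies conjugacy classes with integer partitions $\sigma$ of $d$ via cycle-type, any central $r$ can be written as $r = \sum_\sigma c_\sigma K_\sigma$, where $c_\sigma$ denotes the common value of the coefficients on the class $\sigma$; conversely each $K_\sigma$ is central by the same criterion. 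Hence the $K_\sigma$ span $Z(\R[S_d])$.

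Next I would check linear independence, which is the easier half. By Definition \ref{basisdfn}, each permutation of $S_d$ appears in exactly one $K_\sigma$, namely the one indexed by its own cycle-type, so distinct class sums have disjoint supports in the group-element basis of $\R[S_d]$. Therefore a relation $\sum_\sigma c_\sigma K_\sigma = 0$ forces every $c_\sigma$ to vanish, because the elements of $S_d$ are themselves linearly independent in $\R[S_d]$.

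I do not expect a serious obstacle here; the only point requiring care is the reindexing in the conjugation computation, where the substitution $g' = hgh^{-1}$ must be tracked so that the coefficient attached to $g'$ is read off as $c_{h^{-1}g'h}$. Once the centrality criterion is phrased as ``coefficients constant on conjugacy classes,'' both the spanning and the independence statements are immediate consequences of the bijection between conjugacy classes and cycle-types.
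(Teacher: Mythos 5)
Your proof is correct and follows essentially the same route as the paper's: both arguments reduce to the observation that an element of $\R[S_d]$ is central exactly when its coefficient function is constant on conjugacy classes, identified with cycle-types. The only differences are presentational — you derive both directions from a single conjugation computation and explicitly verify linear independence via disjoint supports, a point the paper leaves implicit.
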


\begin{proof}
First, we will show that $K_\sigma$ is in the center of the group algebra, $Z(\R[S_d])$. Consider an element $g$, which is a single permutation inside $\R[S_d]$. Then
$$g K_\sigma = g(\sum_{\sigma(\alpha) = \sigma} \alpha) = \sum_{\sigma(\alpha) = \sigma} g \alpha  = \sum_{\sigma(\alpha) = \sigma} g \alpha g^{-1}g \overset{*}{=} \sum_{\sigma(\alpha') = \sigma} \alpha' g = K_\sigma g.$$
The starred equality holds because two elements in $S_d$ are conjugate if and only if they have the same cycle-type (which is to say integer partition of $d$) and conjugation is a bijection. Any element $r \in \R[S_d]$ is an $\R$-linear combination of permutations, so this computation actually checks that $K_\sigma$ is central.

Conversely, let $z = \sum_{g\in S_d} c_g \cdot g$ be some central element in $\R[S_d]$ and suppose there is a permutation $\alpha$ so that $c_\alpha \neq 0$. Then, for any $\beta = g \alpha g^{-1}$, the coefficient of $\beta$ in $g z g^{-1}$ is $c_\alpha$. However, because $z$ is central, this coefficient is also $c_\beta$. Hence the coefficients are constant on conjugacy classes and $z$ is in the span of the $K_\alpha$. So the set $\{K_\sigma \mid \sigma \mbox{ an integer partition of } d\}$ forms a basis for the center of the group algebra as a vector space.
\end{proof}

\begin{notation}
We will write $|K_\sigma|$ for the size of the conjugacy class with integer partition $\sigma$. If $\sigma = (1^{n_1}2^{n_2} \cdots d^{n_d})$, then standard combinatorial techniques show that
$$|K_\sigma| = \frac{d!}{1^{n_1}n_1!2^{n_2}n_2! \cdots d^{n_d}n_d!}.$$
\end{notation}

\begin{dfn}
Define the function $\tr: Z(\R[S_d]) \to \R$ by
$$\tr(\sum_{g \in S_d} c_g \cdot g) = c_e.$$
\end{dfn}

\begin{example}\label{conjsize}
Let $\sigma \neq \sigma'$ be integer partitions of $d$. Then $\tr(K_\sigma K_{\sigma'}) = 0$. This is because the only way to write the identity as the product of two permutations is as the product of inverses, and in $S_d$, inverses have the same cycle-type. Similarly, $\tr(K_\sigma K_\sigma) = |K_\sigma|$ because each element has a unique inverse in its conjugacy class.
\end{example}

\begin{lemma}
The function $\tr$ is linear.
\end{lemma}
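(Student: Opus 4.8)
The plan is to verify the two defining properties of a linear map, namely additivity and homogeneity, directly from the formula defining $\tr$. The essential input is Lemma \ref{basis} together with the fact that $S_d$ itself is an $\R$-basis of the group algebra $\R[S_d]$: every element has a \emph{unique} expression $\sum_{g \in S_d} c_g \cdot g$, so the coefficient $c_e$ of the identity is a well-defined real number, and $\tr$ is a genuine function on $Z(\R[S_d])$ (viewed as a subspace of $\R[S_d]$).

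First I would fix two central elements $x = \sum_{g \in S_d} a_g \cdot g$ and $y = \sum_{g \in S_d} b_g \cdot g$ and a scalar $\lambda \in \R$. The point is that both vector-space operations in $\R[S_d]$ are performed coordinate-wise with respect to the basis $S_d$:
$$x + y = \sum_{g \in S_d} (a_g + b_g) \cdot g, \qquad \lambda x = \sum_{g \in S_d} (\lambda a_g) \cdot g.$$
Reading off the coefficient of the identity $e$ from each expression then gives
$$\tr(x + y) = a_e + b_e = \tr(x) + \tr(y), \qquad \tr(\lambda x) = \lambda a_e = \lambda\, \tr(x),$$
which are exactly additivity and homogeneity.

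There is no real obstacle here: conceptually, $\tr$ is just the composite of the inclusion $Z(\R[S_d]) \hookrightarrow \R[S_d]$ with the coordinate functional that extracts the $e$-component, and both maps are linear. The only thing worth stating carefully is the uniqueness of the basis expansion, since without it the assignment $\sum_g c_g \cdot g \mapsto c_e$ would not be well-defined; everything else is immediate.
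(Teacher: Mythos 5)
Your proof is correct and is essentially the same as the paper's, which simply observes that $\tr$ is the projection onto the coefficient of the basis element $e$; you have just spelled out the coordinate-wise verification of additivity and homogeneity that this observation encapsulates.
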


\begin{proof}
The trace function is the projection onto one of the basis elements.
\end{proof}

\begin{prop}
The number of conjugacy classes of monodromy representations, $\hat{f}$, is equal to $\frac{1}{d!} \tr(K_{\sigma_1} \ldots K_{\sigma_n})$.
\end{prop}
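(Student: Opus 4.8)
The plan is to unwind the product $K_{\sigma_1} \cdots K_{\sigma_n}$ directly from Definition \ref{basisdfn} and then read off the coefficient of the identity. Writing each factor as $K_{\sigma_i} = \sum_{\sigma(g_i) = \sigma_i} g_i$ and distributing, I would obtain
$$K_{\sigma_1} \cdots K_{\sigma_n} = \sum_{\sigma(g_1)=\sigma_1} \cdots \sum_{\sigma(g_n)=\sigma_n} g_1 g_2 \cdots g_n,$$
a sum of group-ring monomials indexed by tuples $(g_1,\ldots,g_n)$ with $\sigma(g_i)=\sigma_i$. Since $\tr$ is by definition the projection onto the coefficient of $e$, applying it counts exactly those tuples whose product $g_1 g_2 \cdots g_n$ is the identity. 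Thus $\tr(K_{\sigma_1}\cdots K_{\sigma_n})$ equals the number of tuples with the prescribed cycle-types satisfying the single relation $g_1 \ast \cdots \ast g_n = 1$.

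Next I would invoke Proposition \ref{pigens}, which identifies precisely such a tuple (generators of the prescribed cycle-types subject to that one relation) with an element of $Hom^{\overline{\sigma}}(\pi_1(\p^1 \setminus \{Q_1,\ldots,Q_n\}), S_d)$. This gives the clean identity
$$\tr(K_{\sigma_1} \cdots K_{\sigma_n}) = |Hom^{\overline{\sigma}}(\pi_1(\p^1 \setminus \{Q_1,\ldots,Q_n\}), S_d)|,$$
so that dividing by $d!$ reproduces exactly the right-hand side of the formula in Proposition \ref{pigens}, namely $h(\overline{\sigma})$. The bulk of the argument is therefore this bookkeeping: expand, project, and recognize the surviving terms as the homomorphism count.

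The one genuinely delicate point—and the step I expect to be the main obstacle—is the passage from the raw homomorphism count to a count of \emph{conjugacy classes} of monodromy representations. Here $S_d$ acts on $Hom^{\overline{\sigma}}$ by simultaneous conjugation of all the $g_i$, and two homomorphisms yield isomorphic covers exactly when they share an orbit. By orbit-stabilizer, each orbit contributes $d!/|Aut(f)|$ to $|Hom^{\overline{\sigma}}|$, using the identification of the stabilizer with $Aut(f)$ recorded in the remark after Proposition \ref{pigens}. Summing over orbits and dividing by $d!$ then collapses the count to $\sum 1/|Aut(f)|$, the automorphism-weighted number of classes. I would want to state carefully that the $1/d!$ normalization together with this stabilizer–automorphism dictionary is precisely what converts the naive tuple count into the weighted count of covers; once that is made explicit, the proposition follows immediately by combining the trace computation above with Proposition \ref{pigens}.
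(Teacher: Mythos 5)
Your proposal is correct and follows essentially the same route as the paper: expand the product of the $K_{\sigma_i}$, read off the coefficient of the identity via $\tr$, and identify the resulting tuples with monodromy representations through Proposition \ref{pigens}. You are in fact more careful than the paper's two-sentence proof about the orbit-stabilizer bookkeeping that turns the raw homomorphism count into the automorphism-weighted count of conjugacy classes, a point the paper only gestures at in the remark following Proposition \ref{pigens}.
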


\begin{proof}
The trace function reads off the coefficient of the identity. So\\ $\tr(K_{\sigma_1} \ldots K_{\sigma_n})$ counts the ways to write the identity as a product of $n$ permutations with cycle-types $\overline{\sigma} = \{\sigma_1, \ldots, \sigma_n\}$. This is exactly the same count as the number of ways to choose generators for the the monodromy representation.
\end{proof}

\begin{note}
The loops $g_i$ around the punctures in the covering space give the trivial relation in any order, so it's not surprising that we end up in the class algebra, the \textit{center} of a group ring.
\end{note}

\section{Tropical Geometry}
Algebraic geometry is the study of the geometry of a set through properties of its set of regular functions. Classically, algebraic geometers studied sets that can be described locally as the simultaneous zero set of a collection of polynomials; in this case, the set of regular functions is described as a quotient of a polynomial ring over some field.

The field over which the polynomials above are defined is critical. For example, we might study the solutions to the equation $x^2 + y^2 = 1$. You will immediately note that the set of solutions to the equation depends on the allowable values for $x$ and $y$. Number theorists are interested in this set if $x$ and $y$ are restricted to being in an algebraic extension of $\Q$; a precalculus student might be interested in this set when $x$ and $y$ are real numbers. Most algebraic geometers consider the sets defined by algebraic or regular functions that are defined over an algebraically closed field like $\C$. Algebraic geometry has a very different character over different fields.

\subsection{The Tropical Numbers, $\T$}
Tropical algebraic geometry is algebraic geometry over the tropical numbers. To do tropical geometry, we must first define the tropical numbers and then specify our collections of regular functions.

\begin{dfn}
Let $\T = \R \cup \{-\infty\}$ be the \textbf{tropical numbers}.
\end{dfn}

Although the tropical numbers contain $\R$, we will give them a very different algebraic structure. We will extend the operations of $\max$ and $+$ from $\R$ to all of $\T$ to define a pair of distributing binary operations, making the tropical numbers a semi-field. 

\begin{dfn}\label{troperations}
Let $a,b \in \R$. Then we define $a \oplus b = \max\{a,b\}$ and $a \odot b = a + b$ where this maximum and addition are computed using the traditional Archimedean ordering and additive structure on $\R$. Extend the operations $\oplus$ and $\odot$ to $\T$ by letting
\begin{itemize}
\item $-\infty \oplus a = a = a \oplus -\infty$; \hspace{.1in} $-\infty \oplus -\infty = -\infty$, and
\item $-\infty \odot a = -\infty = a \odot -\infty$; \hspace{.1in} $-\infty \odot -\infty = -\infty$.
\end{itemize}
\end{dfn}

We have extended these operations to all of $\T$ in the most naive manner. We are essentially thinking of $-\infty$ as the most negative `real' number; this makes it clear that $-\infty$ only affects a maximum if all terms are $-\infty$ and it dominates any traditional sum. In the following two expressions, the right-hand expression is an abuse of notation; we will allow this abuse because the common interpretations of these expressions are consistent with the more precise definitions above.
$$-\infty \oplus 1 = 1 = \max\{-\infty,1\}$$
$$-\infty \odot 1 = -\infty = -\infty + 1$$
In short, $\oplus$ can always been interpreted on $\T$ as $\max$ and $\odot$ can always be interpreted as $+$.

For $\T$ to play the role of the field of definition for a branch of algebraic geometry, it should be a field. It turns out that the tropical numbers are not a field because of the lack of additive inverses.

\begin{dfn}
A monoid with all of the additional properties of a field (except the existence of additive inverses) is a called a \textbf{semifield}.
\end{dfn}

As long as we avoid subtraction, we will be able to proceed.

\begin{thm}
With $\oplus$ and $\odot$ defined as in \ref{troperations}, $(\T, \oplus, \odot)$ is a semifield.
\end{thm}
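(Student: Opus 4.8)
The plan is to verify the semifield axioms one family at a time, separating the properties of $\oplus$ alone, the properties of $\odot$ alone, and the single distributive law that links them. For each axiom I would first establish it for arguments in $\R$, where it reduces to a familiar fact about $\max$ and ordinary addition, and then dispatch the finitely many cases involving $-\infty$ directly from Definition \ref{troperations}. The whole argument is thus a case analysis whose real content sits in one place, identified below.

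First I would handle $(\T, \oplus)$. Commutativity and associativity of $\oplus = \max$ on $\R$ are immediate from the total order on $\R$, and the remaining cases (one or both arguments equal to $-\infty$) follow from the stipulations $-\infty \oplus a = a$ and $-\infty \oplus -\infty = -\infty$. The same rules show $-\infty$ is a two-sided identity for $\oplus$, so $(\T, \oplus)$ is a commutative monoid whose identity plays the role of the semifield's additive zero.

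Next I would treat $\odot = +$. On $\R$ this is simply the additive group of the reals: associative, commutative, with identity $0$ and inverse $-a$ for each $a$. Hence every element of $\T$ other than $-\infty$ possesses a $\odot$-inverse, which is exactly the field requirement that nonzero elements be multiplicatively invertible, with $-\infty$ serving as the excluded zero. The structural point to record here is that $-\infty$ is absorbing under $\odot$, namely $-\infty \odot a = -\infty$, which mirrors the field identity $0 \cdot a = 0$ and is built into Definition \ref{troperations}.

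Finally, distributivity $a \odot (b \oplus c) = (a \odot b) \oplus (a \odot c)$ amounts to the identity $a + \max\{b,c\} = \max\{a+b, a+c\}$, valid because translation by $a$ is an order-preserving bijection of $\R$; the $-\infty$ cases again reduce to the absorbing and identity rules. I expect this distributive step to be the conceptual heart rather than a genuine obstacle: its only substance is the monotonicity of $+$, which is what lets addition pass through a maximum. The one thing demanding care throughout is the bookkeeping of the $-\infty$ boundary cases, ensuring that $-\infty$ functions simultaneously as the additive identity and the multiplicative absorbing element without contradiction. I would close by observing that $\oplus$ admits no inverses at all — since $\max\{a, x\} = -\infty$ forces $a = x = -\infty$ — which is precisely the failure that makes $\T$ a semifield and not a field.
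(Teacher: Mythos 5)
Your proposal is correct and follows essentially the same route as the paper, which simply observes that every axiom is a known fact about $\max$ and $+$ on $\R$, that the $-\infty$ cases are straightforward, and that $0_\T = -\infty$ and $1_\T = 0$. You have merely written out the case analysis that the paper leaves implicit, so no further comparison is needed.
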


\begin{proof}
Almost every property is known for values in $\R$, and the checks using $-\infty$ are straight-forward. Note that $0_\T = -\infty$ and $1_\T = 0$.
\end{proof}

\begin{note}
The tropical semifield is idempotent: for all $a \in \T$, $a \oplus a = a$. 
\end{note}

A common first question in tropical geometry seeks an explanation for the choice of the word ``tropical''. According to Jean-Eric Pin \cite{Pin}, the term was coined by Dominique Perrin, a French computer scientist, in honor of his Brazilian friend and colleague, Imre Simon. In the words of Maclagan and Sturmfels \cite{MS}, this choice ``simply stands for the French view of Brazil... without any deeper meaning".

According to Cohen, Gaubert, and Quadrat in \cite{CGQ}, the history of max-plus algebras can be traced to at least the early 1960s. They list scheduling theory, graph theory, dynamic programming, optimal control theory, asymptotic analysis, and discrete event theory as fields that have given rise to the use of this idempotent algebraic structure.

\begin{note}
Some of the researchers listed above actually worked with min-plus algebras on $\R \cup \{\infty\}$. The algebraic structure of these two semifields can be shown to be identical through the transformation $x \mapsto -x$.
\end{note}

Tropical polynomials behave differently from classical polynomials; two distinct tropical polynomials can produce the same tropical function.

\begin{example}
Consider the polynomial $p(x,y) = ``x^2 + xy + y^2" = x \odot x \oplus x \odot y \oplus y \odot y = \max\{2x,x+y,2y\}$. Note that $x+y > 2x$ exactly when $y > x$, in which case the third term achieves the maximum: $\max\{2x,x+y,2y\} = 2y$. The possibility $x+y > 2y$ is similar. As a result, the middle term can never be the maximum (without agreeing with another term), so it can be removed from the polynomial without changing $p(x,y)$ as a function; $``x^2 + xy + y^2" = ``x^2 + y^2"$ as functions.
\end{example}

Tropical polynomials do not produce distinct functions by evaluation; however, they can be grouped into equivalence classes producing the same function by evaluation. A version of the fundamental theorem of algebra holds for equivalence classes of tropical polynomials.

\begin{thm}\cite{MS}
Every tropical polynomial in one variable is equivalent to a tropical polynomial (as a function) that can be written as the product of linear tropical polynomials. In other words, the tropical semifield is algebraically closed.
\end{thm}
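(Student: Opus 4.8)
The plan is to translate this algebraic assertion into a statement about convex, piecewise-linear functions and then solve the resulting geometric reconstruction problem. Writing a one-variable tropical polynomial in the notation of Definition \ref{troperations} and unwinding the operations, we get $p(x) = \max_{0 \le i \le n}\{a_i + i x\}$, where we discard any $i$ with $a_i = -\infty$ since those terms never contribute to the maximum. As a function of the real variable $x$ this is a maximum of finitely many affine functions whose slopes are the nonnegative integers $i$; hence $p$ is convex and piecewise linear with integer slopes. On the factorization side, a normalized linear factor is $x \oplus r = \max\{x, r\}$, and since the tropical product is ordinary addition of functions, a product $c \odot (x \oplus r_1) \odot \cdots \odot (x \oplus r_n)$ becomes $c + \sum_j \max\{x, r_j\}$. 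Each summand is convex piecewise linear with one corner at $r_j$, where the slope increases from $0$ to $1$, so the whole product is the convex function whose slope is $0$ for $x \ll 0$ and increases by one at each $r_j$. The theorem thus reduces to showing that any $p$ of the first form is matched, as a function, by one of the second form.

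First I would show that $p$ depends only on the upper convex hull of the point set $\{(i, a_i)\}$: the term $a_i \odot x^{\odot i}$ affects $\max_i\{a_i + i x\}$ for some $x$ exactly when $(i, a_i)$ is a vertex of that hull, the same mechanism by which the middle term was discarded in the earlier $``x^2 + xy + y^2"$ example. Listing the hull vertices as $(i_0, a_{i_0}), \ldots, (i_k, a_{i_k})$ with $i_0 < \cdots < i_k = n$, I would read off the corners of $p$: between the regions dominated by consecutive vertices the graph bends at $x = \frac{a_{i_j} - a_{i_{j+1}}}{i_{j+1} - i_j}$, where the slope jumps from $i_j$ up to $i_{j+1}$.

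Next I would build a factorization reproducing exactly this slope data. For each hull edge I place $i_{j+1} - i_j$ equal linear factors $x \oplus r_j$ at the breakpoint $r_j$ above, so that a slope jump larger than one is realized by a repeated root; the initial slope $i_0$ (which is positive precisely when the constant term is absent) is supplied by $i_0$ monomial factors $x = x \oplus 0_\T$, i.e. roots at $-\infty$. This yields $i_0 + \sum_j (i_{j+1} - i_j) = i_k = n$ linear factors. Finally I would fix the leading constant $c$ by matching the two functions at a single point. Both functions are then convex and piecewise linear, agree in slope and in every breakpoint location, and agree at one point; two such functions coincide everywhere, so the product equals $p$ as a function, and absorbing $c$ into one factor via $c \odot (x \oplus r) = c \odot x \oplus (c \odot r)$ keeps every factor linear.

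The main obstacle I expect is bookkeeping at the degenerate features rather than the central idea: correctly accounting for repeated roots when a hull edge spans more than one unit of slope, and handling the boundary cases where the constant term or an intermediate coefficient is $-\infty$, so that the lowest slope is positive and some roots sit at $-\infty$. I would also have to justify carefully the two convexity facts underpinning the reduction, namely that only hull vertices contribute to the function and that matching slopes together with one shared value forces two piecewise-linear functions to be equal; both are elementary but deserve explicit statement to make the argument airtight.
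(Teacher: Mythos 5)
Your proposal is correct. The paper itself gives no proof of this statement --- it is quoted with a citation to Maclagan--Sturmfels --- and your argument (reduce to the upper convex hull of $\{(i,a_i)\}$, read off the breakpoints $\frac{a_{i_j}-a_{i_{j+1}}}{i_{j+1}-i_j}$ as roots with multiplicity equal to the slope jump $i_{j+1}-i_j$, supply roots at $-\infty$ for a missing constant term, and match a single value) is essentially the standard proof found in that reference. The only point worth making explicit among your ``bookkeeping'' items is that the breakpoints occur in increasing order of $x$, which follows from the convexity of the upper hull (its edge slopes decrease as $i$ increases), so the assembled product really is a convex function with the same corner sequence as $p$.
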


We interpret this result to mean that it is reasonable to attempt to do geometry over $\T$ that is analogous to classical geometry.

\subsection{Tropical Varieties}\label{TropVars}
\textit{In this subsection, we give the definition of a tropical variety and specify it to dimension $1$, curves.}

Tropical geometry is sometimes thought of as either a logarithmic or valuation image of classical geometry (as seen in the transformation of multiplication into addition). It is also sometimes seen as a ``dequantization" of the algebraic structure on $\R$ \cite{MikM,Litvinov}. As a result, the tropical analogue of classical objects have $\R$-dimension equal to their $\C$-dimension, and they are a special kind of polyhedral objects.

\begin{dfn}
The \textbf{zero locus} of a tropical polynomial is its non-linearity locus as a function.
\end{dfn}

\begin{example}\label{tripodex}
Consider the tropical polynomial in two variables, $p(x,y) = x \oplus y \oplus 0 = \max\{x,y,0\}$. This maximum is non-linear exactly when two of these terms agree and achieve that maximum. The zero locus of $p(x,y)$ is then the following three rays meeting at the origin.
\begin{center}
\psset{xunit=0.8cm,yunit=0.8cm,algebraic=true,dotstyle=o,dotsize=3pt 0,linewidth=0.8pt,arrowsize=3pt 2,arrowinset=0.25}
\begin{pspicture*}(-1.68,-1.85)(2.68,1.84)
\psplot{0}{2.68}{(-0--1*x)/1}
\psline(0,0)(0,-1.85)
\psplot{-1.68}{0}{(-0-0*x)/-1}
\psline{->}(0,0)(1,1)
\psline{->}(0,0)(0,-1)
\psline{->}(0,0)(-1,0)
\rput[tl](0.8,0.6){$(1,1)$}
\rput[tl](-1.6,0.7){$(-1,0)$}
\rput[tl](0.2,-0.5){$(0,-1)$}
\end{pspicture*}
\end{center}
\end{example}

\begin{remark}
If there is any validity to the analogy between classical geometry, then this example can already specify several of the connections. First, the zero locus of a linear polynomial in two variables should be a genus zero curve. So this ``tripod" should be a tropical curve of genus zero. Second, notice that a $1$-dimensional tropical variety would then have $\R$-dimension $1$ as well; in general dimension $k$ tropical objects should have pieces that are dimension $k$ over $\R$. And third, notice that the three primitive integer vectors listed on the rays sum to the zero vector. So perhaps tropical varieties are going to be piece-wise linear objects with a similar condition at the intersections of the linear pieces.
\end{remark}

\begin{remark}
Mikhalkin has shown that, in general, a tropical hypersurface in $\T^n$ (the zero set of a single tropical polynomial in $n$ variables) is a polyhedral complex with a balancing condition at the $(n-2)$-dimensional faces. See Property 3.2 in \cite{MikT}.
\end{remark}

We now give Mikhalkin's more careful definition of this balancing condition in order to define tropical varieties in general. The following definition is extremely technical, but it boils down to a `zero-tension' condition that guarantees the well-definedness of degree for tropical objects.

\begin{dfn}\cite{MikM}
Let $P$ be a $k$-dimensional polyhedral complex embedded in $\T^N$; to each $k$-dimensional cell, associate a rational number called the weight. Let $F \subset P \cap \R^n$ be a $(k-1)$-dimensional cell of $P$ and $F_1,\ldots, F_l$ be the $k$-dimensional cells of $P$ adjacent to $F$ whose weights are $w_1,\ldots,w_l$. Let $L \subset \R^N$ be an $(N-k)$-dimensional affine-linear space defined by integer equations such that it intersects $F$. For a generic (real) vector $v \in \R^N$ the intersection $F_j \cap (L + v)$ is either empty or a single point. Let $\Lambda_{F_j} \subset \Z^N$ be the integer vectors parallel to $F_j$ and $\Lambda_L \subset \Z^N$ be the integer vectors parallel to $L$. Set $\lambda_j$ to be the product of $w_j$ and the index of the subgroup $\Lambda_{F_j}+\Lambda_L \subset \Z^N$. We say that $P \subset \T^N$ is \textbf{balanced} if for any choice of $F$, $L$ and a small generic $v$ the sum
$$\iota_L = \sum_{j | F_j\cap(L+v)\neq\emptyset} \lambda_j$$
is independent of $v$. We say that $P$ is \textbf{simply balanced} if in addition for every $j$ we can find $L$ and $v$ so that $F_j \cap (L + v) \neq \emptyset$, $\iota_L = 1$ and for every small $v$ there exists an affine hyperplane $H_v \subset L$ such that the intersection $P \cap (L + v)$ sits entirely on one side of $H_v + v$ in $L + v$ while the intersection $P \cap (H_v + v)$ is a point.
\end{dfn}

\begin{dfn}
Let $Y$ be a subset of $\T^N$. Then we can define a sheaf of functions on $Y$, $\O_Y$, by taking the restrictions of the (tropical) Laurent polynomials in $N$ variables to $Y$ (and its open sets).
\end{dfn}

\begin{dfn}\cite{MikM}
A topological space $X$ enhanced with a sheaf of tropical functions $\O_X$ is called a \textbf{(smooth) tropical variety} of dimension $k$ if, for every $x \in X$, there exists an open set $U \ni x$ and an open set $V$ in a simply balanced polyhedral complex $Y \subset \T^N$ such that the restrictions $\O_X|_U$ and $\O_Y|_V$ are isomorphic.
\end{dfn}

\begin{remark}\cite{MikM}\label{tropcurvedfn}
The definition of a \textbf{tropical curve}, a smooth tropical variety of dimension $1$, is particularly simple. As in \ref{tripodex}, simply balanced $1$-dimensional objects will be unions of line segments meeting at points. The sheaf of functions induces a structure that is equivalent to a complete metric away from the $1$-valent vertices and adjacent edges. So the edges without degree $1$ vertices must have finite length and are called \textit{bounded} edge; the edges containing $1$-valent vertices must have infinite length in this metric and are called \textit{unbounded} edges. In short, a tropical curve is a certain kind of metric graph.
\end{remark}

\section{Tropical Hurwitz Geometry}
\textit{In this section, we review some of what is known about the relationship between classical and tropical Hurwitz numbers as a well as motivate the kinds of geometric goals we seek for the rest of this document.}

\subsection{Classical Hurwitz Geometry}
\textit{There are many ways to compute the Hurwitz numbers classically, and it's not clear which would make the best analogy for tropical geometry. In this subsection, we frame them as the degree of a morphism between two moduli spaces.}

Hurwitz numbers lie at the intersection of many different areas of mathematics. Historically, Hurwitz related them to the representation theory of $S_d$ and thereby combinatorics. Ekedahl-Lando-Shapiro-Vainshtein connected Hurwitz numbers to the intersection theory on the moduli space of curves, Faber's Conjecture, and integrable systems (\cite{GJV}, \cite{ELSV}). More recently, new light has been shed on Hurwitz numbers by mathematical physics. In particular, they have applications in string theory, the study of Calabi-Yau manifolds, and Gromov-Witten theory. In addition, the set of (classical) ramified covers can be given a geometric structure, which we explain now for motivation both of the methods used below and the results.

Consider a collection of interesting `objects' and a notion of isomorphism of those objects. Let $M$ be the set of isomorphism classes of these objects. Sometimes it is possible to give $M$ a geometric structure. For example, the set of lines through the origin in $\R^2$ is called $\R\p^1$ and given a geometry that makes it diffeomorphic to $\s^1$. In general, projective space is given a geometric structure in a similar manner. Similarly, the tangent space to a point $x$ in a manifold is the set of parameterized curves through $x$ up to reparamaterization. This set is usually given the structure of a vector space, which has both algebraic and geometric structure. The set, $M$, of isomorphism classes, along with a geometric structure is called a \textbf{moduli space}, which I find to be one of the most interesting and exciting concepts in algebraic geometry. Sometimes the geometric structure comes naturally from the representations of the objects (or isomorphism classes). For example, the set of degree $1$ polynomials with real coefficients can be seen as a vector space. These polynomials are usually written $\{ax+b|a,b\in\R\}$; the values of $a$ and $b$ can vary, and we think of them as the coordinates on the moduli space. These varying coordinates are sometimes called parameters or \textit{moduli}.

If the geometric structure on $M$ is natural, it may be possible to use its geometry to ask global questions about the set $M$. For example, the question ``How many curves are there passing through this set of points in the plane?" is answered by Kontsevich's Formula, which can be shown using intersection theory on a moduli space. Each index condition (like passing through a fixed point) is realized as a divisor, and the computation for the formula can be shown using intersection theory. Because this proof uses inersections and degenerations, it requires that the geometric structure on the moduli space be compact, a theme that will resurface below.

In classical algebraic geometry, the most famous moduli spaces are $\overline{M}_g$, the moduli space of stable curves of genus $g$, and the related space $\overline{M}_{g,n}$, the moduli space of stable genus $g$ curves with $n$ distinct marked points. The notion of a ramified cover can also be expanded slightly to allow the construction of a moduli space of admissible covers, $\overline{H}(\overline{\sigma})$, called a Hurwitz space. The geometry of these spaces can be used to compute the Hurwitz numbers; there is a natural map $\phi: \overline{H}(\overline{\sigma}) \to \overline{M}_{g,n}$ such that the $\deg(\phi)$ encodes the Hurwitz numbers. In tropical geometry, Mikhalkin has constructed a moduli space of genus $0$ marked curves, $\M_{0,n}$, and the goal of this document is to construct a candidate for the tropical analogue of a Hurwitz space, $\H(\overline{\sigma})$, along with a natural map to $\M_{0,n}$ whose degree also encodes the Hurwitz numbers in the same way.

There is a partially understood map from classical geometry to tropical geometry called \textbf{tropicalization}, which is discussed very briefly at the beginning of subsection \ref{TropVars}. Tropicalization clearly loses some information, but it appears to retain certain critical enumerative information. Ideally, we would like to fill in the following commutative diagram in which the horizontal maps represent tropicalization and the vertical maps have degrees that encode the Hurwitz numbers by constructing $\H(\overline{\sigma})$.
$$\begin{CD}
\overline{H}(\overline{\sigma}) @>>> \H(\overline{\sigma})\\
@VVV @VVV\\
\overline{M}_{0,n} @>>> \M_{0,n}
\end{CD}$$
The relationship between $\overline{M}_{0,n}$ and $\M_{0,n}$ will be highly instructive for motivating our construction, as will understanding the structure of $\overline{H}(\overline{\sigma})$. Both of these tools require and understanding of $\overline{M}_{0,n}$, so we start there.

\begin{dfn}
An \textbf{$n$-marked rational curve} is a copy of $\p^1$ with a list of $n$ distinct points on $\p^1$, $\overline{Q} = \{Q_1,\ldots,Q_n\}$. Two $n$-marked curves $(\p^1,\overline{Q})$ and $(\p^1,\overline{Q}')$ are isomorphic if there is an isomorphism of curves $f: \p^1 \to \p^1$ such that $f(Q_i) = Q_i'$. For $n \geq 3$, let $M_{0,n}$ be the set of isomorphism classes of $n$-marked rational curves.
\end{dfn}

An $n$-marked rational curve is difficult to draw on a chalk board. As a result, many algebraic geometers draw a line segment for the $\p^1$ and a small tick mark for each marked point. Here is an image of a $4$-marked rational curve in this notation.

\begin{center}
\psset{xunit=1.0cm,yunit=1.0cm,algebraic=true,dotstyle=o,dotsize=3pt 0,linewidth=0.8pt,arrowsize=3pt 2,arrowinset=0.25}
\begin{pspicture*}(-.1,-.1)(1.1,1.1)
\psline(0,0)(1,1)
\psline(.15,.25)(.25,.15)
\psline(.35,.45)(.45,.35)
\psline(.55,.65)(.65,.55)
\psline(.75,.85)(.85,.75)
\end{pspicture*}
\end{center}

The set $M_{0,n}$ can be given the structure of a non-compact variety. The locations of the $Q_i$ are coordinates on this space, and there are clearly holes at the points where multiple marked points would coincide. We could compactify by allowing the points to collide; then the compactification would be $(\p^1)^n/Aut(\p^1)$. Some of these curves have an infinite number of automorphisms, which we wish to avoid here. In addition, we are going to add other information to each of the marked points, so we will not want them to be allowed to coincide. One way to do this would be to remember the direction from which the points approached each other. The projectivized tanget space to a point on $\p^1$ is again $\p^1$, so this could be accomplished by keeping the original $\p^1$, adding a second $\p^1$ intersecting the first at the point where the marked points were going to collide, and moving the colliding marked points onto the new $\p^1$. For example, if two of the marked points in $M_{0,4}$ collide, this would produce the following \textit{degeneration}.

\begin{center}
\psset{xunit=1.0cm,yunit=1.0cm,algebraic=true,dotstyle=o,dotsize=3pt 0,linewidth=0.8pt,arrowsize=3pt 2,arrowinset=0.25}
\begin{pspicture*}(-.1,-.1)(4.1,1.1)
\psline(0,0)(1,1)
\psline(.15,.25)(.25,.15)
\psline(.35,.45)(.45,.35)
\psline(.55,.65)(.65,.55)
\psline(.75,.85)(.85,.75)
\psline{->}(1,0.5)(2,0.5)
\psline(2,0)(3,1)
\psline(2.15,.25)(2.25,.15)
\psline(2.35,.45)(2.45,.35)
\psline(2.8,1)(3.8,0)
\psline(3.55,.15)(3.65,.25)
\psline(3.35,.35)(3.45,.45)
\end{pspicture*}
\end{center}

This object is a \textbf{tree} of $\p^1$s, meaning that each irreducible component is a copy of $\p^1$. For the curves in $\overline{M}_{0,n}$ this tree has no cycles or self-intersections. In other words, the genus has not increased. There are now two kinds of \textbf{special} points on the irreducible components, marked points and intersections with other components.

We use these ideas to expand the collection of isomorphism classes that we are considering in order to compactify $M_{0,n}$.

\begin{dfn}
An \textbf{$n$-marked stable rational curve} is a tree of $\p^1$s, $X$ with a list of $n$ distinct, smooth points on $X$, $\overline{Q} = \{Q_1,\ldots,Q_n\}$ such that each irreducible component has at least three special points. Two $n$-marked stable curves $(X,\overline{Q})$ and $(X',\overline{Q}')$ are isomorphic if there is an isomorphism of curves $f: X \to X'$ such that $f(Q_i) = Q_i'$. For $n \geq 3$, let $\overline{M}_{0,n}$ be the set of isomorphism classes of $n$-marked stable rational curves.
\end{dfn}

The set $\overline{M}_{0,n}$ can be given the geometric structure of a compact variety.

\begin{remark}
The condition $n \geq 3$ clearly implies that an $n$-marked rational curve is an $n$-marked stable rational curve.
\end{remark}

\begin{remark}
The term \textbf{stable} is used because these curves do not have any non-trivial automorphisms. Recall that Mobius transformations (the automorphisms of $\p^1$) can take any three points on $\p^1$ to any other three points on $\p^1$. Once three points are fixed, the other points are determined.
\end{remark}

\begin{remark}
By the discussion above, $M_{0,4}$ can be given the following geometric structure. Use Mobius transformations to send the first three marked points to $\{0,1,\infty\}$. This sends the fourth point to the cross ratio of the original $4$ points. This fourth point can be located at any other point on $\p^1$, so $M_{0,4}$ can be given the geometric structure of a thrice punctured sphere. This object is sometimes called a `bowling ball'. Notice that it is homeomorphic to the `pair of pants' from $2$-dimensional topological quantum field theory.

There are three extra $4$-marked stable curves added to compactify this space, and they correspond to the three ways to add labels to the marked points in the image of a degenerate tree of $\p^1$s. As a result, $\overline{M}_{0,4}$ is isomorphic to $\p^1$.
\end{remark}

\begin{dfn}
The \textbf{boundary} of this compactification is the set of isomorphism classes of the $n$-marked stable rational curves that are not $n$-marked rational curves.
\end{dfn}

For $n > 4$, the boundary will be more than a finite collection of points. Moreover, $\overline{M}_{0,n}$ can be stratified by the number of irreducible components in the curve $X$.  The $n$-marked rational curves form a dense open set of smooth curves. Moreover, the closure of each stratum contains all of the more degenerate strata.

Now we return to ramified covers. The set of isomorphism classes of ramified covers can be given the structure of a non-compact moduli space. This set can be realized as a dense open set in a compact moduli space, the moduli space of admissible covers.

\begin{dfn}
Given a ramification profile $\overline{\sigma} = \{\sigma_1,\ldots,\sigma_n\}$ of $n$ integer partitions of $d$, an admissible cover is a morphism of curves $f: D \to X$ such that
\begin{itemize}
\item $X$ is an $n$-marked stable rational curve,
\item $f^{-1}(X_{non-sing}) = D_{non-sing}$,
\item for each irreducible component $X_i$ of $X$, $f: f^{-1}(X_i) \to X_i$ is a ramified cover,
\item $\sigma(Q_i) = \sigma_i$,
\item if $P$ lies in the intersection of two irreducible components of $D$, then the ramification index, $m_P$, is independent of the component used to compute it, and
\item at all other points $f$ is unramified.
\end{itemize}
Two admissible covers are isomorphic if there is an isomorphism of $D$ and $X$ that commutes with $f$. Given a ramification profile $\overline{\sigma}$ let $\overline{H}(\overline{\sigma})$ be the set of isomorphism classes of admissible covers, called a \textbf{Hurwitz space}.
\end{dfn}

\begin{remark}
In section 3.G of \cite{HM}, Harris and Morrison argue that $\overline{H}(\overline{\sigma})$ is a compactification of the space of ramified covers.
\end{remark}

\begin{remark}
As above, the degree $d$ is implicit in $\overline{\sigma}$, and the genus of the domain curve can be computed from $\overline{\sigma}$ using the Riemann-Hurwitz formula. If we instead dropped the requirement that all of the branch points are included in $\overline{Q}$, then this information would be nontrivial.
\end{remark}

\begin{remark}
If $X$ is smooth, this definition restricts to our definition of a ramified cover above. If $X$ is not smooth, then this definition still gives us a collection of marked points giving the ramification profile $\overline{\sigma}$. In addition, the penultimate requirement tells us that there is also a consistent notion of the ramification profile over the non-singular points of $X$. In other words, each special point in the codomain of an admissible cover has a ramification profile, not just the marked points.
\end{remark}

While the codomain curves in admissible covers have no non-trivial automorphisms, the covers themselves may. However, the groups of automorphisms for each cover will be finite. In this situation, the appropriate structure to consider is a stack. The set $\overline{H}(\overline{\sigma})$ can be given the structure of a Deligne-Mumford stack; see for example \cite{Cavalieri}. The isomorphism classes of ramified covers form a dense open set, as with $\overline{M}_{0,n}$.

There is a natural ``forgetful" morphism, $\phi_{\overline{\sigma}}: \overline{H}(\overline{\sigma}) \to \overline{M}_{0,n}$ that sends a cover $f: D \to X$ to the $n$-marked curve $X$. Notice that the boundary of the Hurwitz space maps into the boundary of the moduli space of marked points. The inverse image under $\phi_{\overline{\sigma}}$ of a marked curve $C$ is exactly the set of branched covers of $C$ with ramification profile $\overline{\sigma}$, up to certain reorderings of the marked points. As a result, the Hurwitz numbers can be recovered geometrically from the degree of this forgetful morphism. Moreover, the fact that this map even has a degree tells us that the Hurwitz number does not depend on the isomorphism class of $X$, which is not clear from the definition.

\begin{thm}\cite{ELSV,GV}
Suppose $g, m$ are integers ($g \geq 0$, $m \geq 1$) such that $2g-2+m > 0$ (ie the functor $\overline{M}_{g,n}$ is represented by a Deligne-Mumford stack). Then
$$H^g_\alpha = \frac{r!}{\# Aut(\alpha)} \prod_{i=1}^m \frac{\alpha_i^{\alpha_i}}{\alpha_i!} \int_{\overline{M}_{g,n}} \frac{1- \lambda_1 + \cdots \pm \lambda_g}{\prod (1-\alpha_i\psi_i)}$$
where $\lambda_i = c_i(\mathbb{E})$ ($\mathbb{E}$ is the Hodge bundle).
\end{thm}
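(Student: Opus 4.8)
The plan is to realize $H^g_\alpha$ as an intersection number on a moduli space of stable maps and then extract the formula by an equivariant localization argument, following the strategy of Graber and Vakil \cite{GV}. First I would set up the space $\overline{M}_g(\p^1,\alpha)$ of relative stable maps of genus $g$ to $\p^1$ with prescribed ramification profile $\alpha = (\alpha_1,\ldots,\alpha_m)$ over the point $\infty$ and only simple branching elsewhere; by the Riemann--Hurwitz formula the number of remaining simple branch points is exactly $r$. This space carries a virtual fundamental class of the expected dimension, and the Hurwitz number $H^g_\alpha$ equals, up to the automorphism factor $\#Aut(\alpha)$ and the $r!$ orderings of the unlabeled simple branch points, the degree of the branch morphism against this virtual class.

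Next I would introduce the $\C^*$-action on $\p^1$ fixing $0$ and $\infty$, lift it to $\overline{M}_g(\p^1,\alpha)$, and apply the Graber--Pandharipande virtual localization formula to rewrite the integral as a sum of contributions of the $\C^*$-fixed loci. A fixed stable map has every component either contracted over $0$ or over $\infty$, or a rational component totally ramified over both $0$ and $\infty$. The distinguished locus $F_0$ is the one where a single genus-$g$ curve is contracted over $0$, carrying $m$ marked points at the nodes where rational tails of the form $z \mapsto z^{\alpha_i}$ attach; this $F_0$ is isomorphic to $\overline{M}_{g,n}$ (with $n = m$) up to a finite quotient.

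The core computation is then the equivariant Euler class of the virtual normal bundle along $F_0$, which I expect to factor into two pieces. The obstruction contribution from $H^1(\O)$ is governed by the Hodge bundle $\mathbb{E}$ and produces the Chern polynomial $1 - \lambda_1 + \cdots \pm \lambda_g$, while the $m$ node-smoothing parameters, each attached to a tail ramified to order $\alpha_i$, produce the denominator $\prod(1 - \alpha_i\psi_i)$ after expanding the inverse Euler class in the equivariant parameter. The rigidity of each totally-ramified tail $z \mapsto z^{\alpha_i}$ supplies the local weight $\alpha_i^{\alpha_i}/\alpha_i!$, matching the combinatorial prefactor, with $r!$ and $\#Aut(\alpha)$ accounting for the branch-point orderings and the symmetry of equal parts of $\alpha$.

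The main obstacle, and the genuine technical heart of the argument, will be showing that after summing over \emph{all} fixed loci the dependence on the equivariant parameter cancels, so that only the $F_0$ contribution survives and reproduces exactly the stated Hodge integral. This requires the full deformation--obstruction analysis of relative stable maps together with careful bookkeeping of the many rational-tail configurations; verifying that the spurious loci contribute zero to the relevant coefficient (rather than merely lower-order noise) is precisely where the virtual localization machinery is indispensable and where I would expect the bulk of the work to lie.
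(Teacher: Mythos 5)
The paper does not prove this theorem at all: it is quoted verbatim from the literature, with \cite{ELSV,GV} as the source, and the only commentary the paper offers is that the proof in \cite{ELSV} hinges on the Lyashko--Looijenga map $\LL$ sending a cover to its unordered set of branch points, i.e.\ on computing the degree of $\LL: H(\overline{\sigma}) \to \C^{\mu-1}$ in two different ways. Your sketch instead reproduces the other cited proof, the Graber--Vakil argument: realize $H^g_\alpha$ as a virtual intersection number on a space of relative stable maps to $\p^1$, localize with respect to the $\C^*$-action fixing $0$ and $\infty$, and read off the Hodge-bundle numerator, the $\psi$-class denominator, and the $\alpha_i^{\alpha_i}/\alpha_i!$ tail weights from the inverse Euler class of the virtual normal bundle at the distinguished fixed locus $F_0 \cong \overline{M}_{g,m}$. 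At the level of detail you give, this outline is accurate, and you correctly flag where the real work lies --- though in Graber--Vakil the extraneous fixed loci are killed not by a cancellation in the equivariant parameter but by a judicious choice of equivariant lift of the class being integrated, which forces their contributions to vanish term by term; that choice is the lemma that makes the bookkeeping tractable. The trade-off between the two routes is roughly this: the $\LL$-map proof of \cite{ELSV} stays closer to the Hurwitz-space geometry that this paper is tropicalizing (degree of a finite map from a compactified space of covers), while the localization proof you propose requires the heavier machinery of virtual fundamental classes but delivers the Hodge integral on $\overline{M}_{g,n}$ directly and generalizes more readily. Either is an acceptable citation-level justification here, since the theorem is used only as background motivation and none of the paper's original results depend on its proof.
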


The key tool in the proof of the above theorem in \cite{ELSV} relies on the famous Lyashko-Looijenga mapping, $\LL$, that associates to a holomorphic function the (unordered) set of its ramification values, much like the branch morphism. The factor $Aut(\overline{\sigma})$ below simply accounts for issues arising from the situation in which multiple points have the same ramification profile. The following theorem frames the classical results in the form most analogous to the tropical results below.

\begin{thm}\cite{ELSV}
There is a morphism $\LL: H(\overline{\sigma}) \to \C^{\mu-1}$ so that $h(\overline{\sigma}) = \frac{\deg(\LL)}{|Aut(\ol{\sigma})|}$.
\end{thm}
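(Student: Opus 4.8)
The plan is to realize $\LL$ as the branch morphism and to compute its degree by identifying a generic fiber with the monodromy data already enumerated in Proposition \ref{pigens}. First I would build the map: to a cover $f : D \to \p^1$ in $H(\ol{\sigma})$ I associate the configuration of its branch points in $\C$, recorded together with their ramification profiles. Grouping the points by profile type and encoding each group by its elementary symmetric functions lands the cover in a product of symmetric powers of $\C$, which after normalizing away the translation action (fixing, say, the centroid of the branch locus) is isomorphic to $\C^{\mu-1}$. That the branch points vary algebraically with $f$, so that $\LL$ is a genuine morphism, follows because the branch divisor is cut out by the discriminant of $f$, which depends algebraically on $f$. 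A dimension count shows the source is a finite cover of this configuration space, so both source and target have dimension $\mu - 1$ and $\LL$ is a dominant morphism of equidimensional spaces with a well-defined degree.

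Next I would compute the degree from a generic fiber. Over a configuration of distinct points in general position the discriminant does not vanish, so I expect $\LL$ to be finite \'etale there and the degree to be computed by any such fiber. Over one of these points the restriction of $f$ to the complement of the branch locus is an honest covering space, so by the analysis of Subsection \ref{monodromy} the fiber is in bijection with tuples $(\gamma_1,\ldots,\gamma_n)$ in $S_d$ satisfying $\gamma_1 \cdots \gamma_n = 1$ and $\sigma(\gamma_i) = \sigma_i$, taken up to simultaneous conjugacy and weighted by $\tfrac{1}{|Aut(f)|}$. By Proposition \ref{pigens}, this weighted count, for branch points carrying fixed and distinguished profiles, is exactly $h(\ol{\sigma})$.

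Finally I would account for the automorphism factor. Since $\LL$ remembers the profile of each branch point but not an ordering of the points that share a profile, two covers that differ only by permuting branch points of identical profile are distinct points of the (labeled) space $H(\ol{\sigma})$ lying over the same configuration; the number of such relabelings is precisely $|Aut(\ol{\sigma})|$. Collecting these contributions gives
\[
\deg(\LL) = |Aut(\ol{\sigma})| \cdot h(\ol{\sigma}),
\]
which is the claimed identity. I expect the main obstacle to be the last two steps carried out together: verifying that $\LL$ is finite \'etale over the complement of the discriminant, so that the degree may be read from a single generic fiber, and then threading the stacky weights $\tfrac{1}{|Aut(f)|}$ through the monodromy identification so that the geometric degree differs from the weighted Hurwitz count by exactly $|Aut(\ol{\sigma})|$, with no spurious factor of $\mu!$ or $d!$ surviving.
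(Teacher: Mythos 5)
The paper does not actually prove this theorem: it is quoted from \cite{ELSV}, with only a one-sentence gloss identifying $\LL$ as the Lyashko--Looijenga map that records the unordered set of ramification values and attributing the $|Aut(\overline{\sigma})|$ factor to branch points sharing a profile. Your sketch fills in precisely that argument (the branch morphism as $\LL$, the generic fiber computed via the monodromy count of Proposition \ref{pigens}, and the $|Aut(\overline{\sigma})|$ relabelings of equal-profile branch points), so it is correct in outline and follows the same route the paper points to.
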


\subsection{Tropicalization}

Recall from example \ref{tripodex} that our tropical analogue of $\p^1$ was a graph with a single vertex and three emanating rays. This inspires \textit{both} a dual picture to our trees of $\p^1$s above and a possible way to think of tropicalization.

\begin{dfn}
Consider an $n$-marked stable rational curve $X$. Create a graph $G_X$ in the following manner. For each irreducible component and marked point of $X$, create a vertex for $G_X$. If a marked point is on an irreducible component, add an edge to $G_X$ between their vertices. If two irreducible components of $X$ intersect, add an edge between their vertices. We will call $G_X$ the \textbf{dual graph of $X$}.
\end{dfn}

\begin{remark}
Notice that the stability condition implies that the vertices corresponding to irreducible components have degree at least $3$. This condition will still be called stability in the following chapters. Also notice that the definition of a tree of $\p^1$s tells us that the image is connected and has no circuits, namely it is a tree. 
\end{remark}

We now compare the image of the dualization map to Mikhalkin's moduli space of marked tropical rational curves, $\M_{0,n}$. Recall that a tropical curve is a metric graph with a complete metric on the complement of the 1-valent vertices (\ref{tropcurvedfn}).

\begin{dfn}
The genus of a tropical curve $X$ is $g= \dim(H^1(X))$.
\end{dfn}

\begin{remark}
So, the connected, genus zero tropical curves will have no circuits, meaning that they are trees.
\end{remark}

\begin{remark}\cite{MikM}
Adding an unbounded edge to a tropical curve is the tropical analogue of deleting a point from the dual classical curve. This is an example of a more general technique, called \textbf{tropical modification}. See \cite{MikM} for more detail in higher dimensions. As a result, we can think of adding unbounded edges on a tropical curve as analogous to marking points on a classical curve. As a result, adding unbounded edges to a tropical curve gives us a way to mark points on that curve.
\end{remark}

\begin{dfn}
Let $\M_{g,n}$ be the set of tropical curves with genus $g$ and $n$ distinct marked points (meaning $n$ distinct unbounded edges).
\end{dfn}

\begin{remark}
The only automorphism of a tree that fixes all of its leaves is the identity. To avoid automorphisms of a curve with itself, we and Mikhalkin restrict ourselves to the case $g=0$. In addition, the moduli spaces of higher genus curves are much less well understood.
\end{remark}

Mikhalkin then gives a description of $\M_{0,n}$ as a polyhedral complex.

\begin{dfn}\label{combtype}
The \textbf{combinatorial type} of a tropical curve with $n$ marked points is its equivalence class up to homeomorphisms respecting the markings.
\end{dfn}

The combinatorial types partition the set $\M_{0,n}$ into disjoint subsets. The edge-length functions for the finite edges give the subset of $\M_{0,n}$ with a given combinatorial type the structure of an integral polyhedral cone $\R^M_{>0}$ because each of the lengths must be positive, where $M$ is the number of finite edges. If $G$ is a genus $0$ curve with only degree $1$ and $3$ vertices, then $G$ has $n-3$ bounded edges (and fewer otherwise).

Furthermore, any face of the closure of this cone, $\R^M_{\geq0}$, gives a cone corresponding to another combinatorial type, the type in which some of the edges of the curve have been contracted. This gives an adjacency structure on $\M_{0,n}$, making it a polyhedral complex.

\begin{thm}
The set $\M_{0,n}$ for $n \geq 3$ admits the structure of an $(n-3)$-dimensional tropical variety such that the edge-length functions are regular within each combinatorial type. Furthermore, $\M_{0,n}$ can be tropically embedded in $\R^N$ for some $N$, meaning that $\M_{0,n}$ can be presented as a simply balanced complex.
\end{thm}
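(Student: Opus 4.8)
The plan is to exhibit $\M_{0,n}$ as a balanced polyhedral fan by embedding it into a Euclidean space through pairwise tree distances, and then to verify the balancing condition combinatorially at each ridge. The cone structure itself is essentially forced by the discussion preceding the theorem: each combinatorial type (Definition \ref{combtype}) of a genus $0$, $n$-marked curve contributes a relatively open cone parametrized by the lengths of its bounded edges, the trivalent types give the top-dimensional cones, and a tree with only $1$- and $3$-valent vertices has exactly $n-3$ bounded edges, so the top cones are $(n-3)$-dimensional. Contracting a bounded edge (sending its length to $0$) realizes a face, so the cones fit together into an $(n-3)$-dimensional polyhedral complex; since every type degenerates to the single totally contracted star with one $n$-valent vertex, this complex is in fact a fan with apex at that point. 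First I would fix this combinatorial bookkeeping and record that, within a fixed type, the bounded-edge-length functions are literally the coordinate functions and hence regular.

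Next I would construct the tropical embedding. To each curve $C$ assign the vector $\Phi(C) \in \R^{\binom{n}{2}}$ whose $(i,j)$ entry $d_{ij}$ is the total length of the bounded edges lying on the path between the markings $i$ and $j$; the unbounded portions are discarded so that each $d_{ij}$ is finite. Each bounded edge $e$ determines a split of the marking set into the two groups of leaves lying on its two sides, and $\ell_e$ contributes to $d_{ij}$ exactly when that split separates $i$ from $j$, so $d_{ij} = \sum_{e \text{ separating } i,j} \ell_e$. This shows $\Phi$ is integral-linear on each cone, so its coordinate functions restrict to tropical Laurent polynomials and the induced sheaf matches the edge-length structure, giving the asserted regularity. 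Injectivity is the classical tree-reconstruction (split-decomposition) statement: the collection of splits occurring in a tree is pairwise compatible, and a compatible split system together with its positive weights is recovered uniquely from the metric $d$, so distinct curves have distinct images.

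I would then characterize the image and check balancing. A vector $d$ lies in the image precisely when it satisfies the four-point condition: for every four markings $i,j,k,l$ the maximum of the three sums $d_{ij}+d_{kl}$, $d_{ik}+d_{jl}$, $d_{il}+d_{jk}$ is attained at least twice. This is exactly the tropicalization of the Plücker relation $p_{ij}p_{kl}-p_{ik}p_{jl}+p_{il}p_{jk}$, so the image is the space of phylogenetic trees, namely $\mathrm{trop}(\mathrm{Gr}(2,n))$ modulo its $n$-dimensional lineality space $\{(a_i+a_j)_{ij}\}$. This presents $\M_{0,n}$ as the zero locus of tropical polynomials and hence as a tropical variety in the sense required by the paper.

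The main obstacle is verifying that this complex is genuinely balanced, and in fact \emph{simply} balanced with all weights $1$, in Mikhalkin's technical formulation rather than by merely citing that tropicalizations are balanced. The key local computation is that each ridge corresponds to a type with a single $4$-valent vertex (all others trivalent), and such a vertex resolves into a trivalent configuration in exactly three ways, so exactly three maximal cones meet along the ridge. I would show that the three primitive edge-direction vectors produced by these resolutions sum to zero modulo the lineality of the ridge—the higher-dimensional analogue of the tripod of Example \ref{tripodex}—so that assigning weight $1$ to every maximal cone makes $\iota_L=1$ independent of $v$ and satisfies the separating-hyperplane clause of simple balancedness. Carrying out this index and lattice computation inside Mikhalkin's definition is the technically delicate step, and the part I expect to require the most care.
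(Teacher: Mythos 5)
Your outline is essentially sound, but you should know that the paper does not prove this theorem at all: it is quoted from Mikhalkin's \cite{MikM}, and the only supporting material in the text is the informal discussion just before the statement, which sets up the cone decomposition by combinatorial type and the count of $n-3$ bounded edges for trivalent trees. The closest thing to an embedding in the paper appears much later, in the construction of the morphism $\phi$, where the coordinates are the $N = 3\binom{n}{4}$ double ratios $d_{(w,x),(y,z)}$ (signed lengths of intersections of two leaf-to-leaf paths, Definition \ref{doubleratio}); this is Mikhalkin's own coordinate system, and the paper observes afterward that the image $\tilde{M}$ \emph{is} $\M_{0,n}$. Your route is genuinely different: you embed via the truncated distance matrix $(d_{ij}) \in \R^{\binom{n}{2}}$ and identify the image with the space of phylogenetic trees, i.e.\ $\mathrm{trop}(\mathrm{Gr}(2,n))$ modulo its lineality space, in the style of Speyer--Sturmfels. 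The two embeddings are linearly related (each double ratio is an integral-linear combination of the $d_{ij}$ that kills the lineality directions), so either works; yours buys an explicit description of the image by tropical Plücker relations and lets balancing be deduced from general facts about tropicalizations, whereas the double-ratio coordinates avoid any quotient and are the ones the paper actually needs downstream. Two points in your sketch deserve the care you already flag: first, injectivity of $\Phi$ requires recovering the weight of each internal split from the four-point computation (possible since every internal split has at least two leaves on each side); second, the image of your $\Phi$ is a cross-section of the quotient by the lineality space rather than the quotient itself, so you must check that this slice inherits the lattice structure correctly before running Mikhalkin's simple-balancing test with the index $\iota_L = 1$ at each ridge where the three resolutions of a $4$-valent vertex meet.
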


Although tropicalization is not understood fully, there are some relationships known for the map $\overline{M}_{0,n} \to \M_{0,n}$.

\begin{example}
Classically, $\overline{M}_{0,4}$ is isomorphic to $\p^1$, which we saw above. Mikhalkin's construction gives $\M_{0,4}$ the structure of three rays emanating from a point. This is exactly our candidate for the tropical analogue of $\p^1$! Moreover, each of the rays in $\M_{0,4}$ is associated to one of the three points in the boundary of $\overline{M}_{0,4}$. The position on the ray corresponds to the length of the bounded edge in the dual graph to these degenerate trees of $\p^1$s.
\end{example}

For now, this discussion allows us to think of tropicalization as dualization. Notice that dualization is a ``degeneration reversing" map between the classical and tropical moduli spaces of marked curves.

\begin{center}
\psset{xunit=1.0cm,yunit=1.0cm,algebraic=true,dotstyle=o,dotsize=3pt 0,linewidth=0.8pt,arrowsize=3pt 2,arrowinset=0.25}
\begin{pspicture*}(-0.1,-2.1)(4.1,1.1)
\psline(0,0)(1,1)
\psline(.15,.25)(.25,.15)
\psline(.35,.45)(.45,.35)
\psline(.55,.65)(.65,.55)
\psline(.75,.85)(.85,.75)
\psline{->}(1.1,0.5)(1.9,0.5)
\psline(2,0)(3,1)
\psline(2.15,.25)(2.25,.15)
\psline(2.35,.45)(2.45,.35)
\psline(2.8,1)(3.8,0)
\psline(3.55,.15)(3.65,.25)
\psline(3.35,.35)(3.45,.45)
\psline{<-}(1.1,-1.5)(1.9,-1.5)
\psline(0,-1.5)(1,-1.5)
\psline(0.5,-2)(0.5,-1)
\psline(2.6,-1.5)(3.2,-1.5)
\psline(2.6,-1.5)(2,-1)
\psline(2.6,-1.5)(2,-2)
\psline(3.2,-1.5)(3.8,-1)
\psline(3.2,-1.5)(3.8,-2)
\end{pspicture*}
\end{center}

\begin{prop}
The map $X \to G_X$, called dualization above, gives a bijection between the boundary strata of $\overline{M}_{0,n}$ and the combinatorial types of $n$-marked, genus $0$ tropical curves. Moreover, if $Y$ is a stratum of the boundary of $\overline{M}_{0,n}$ and $Y'$ is a stratum in the closure of $Y$, then the combinatorial type of $G_Y$ is a degeneration of the combinatorial type of $G_{Y'}$.
\end{prop}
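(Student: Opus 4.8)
The plan is to prove both assertions by reducing everything to the standard combinatorial classification of the strata of $\overline{M}_{0,n}$ by the topological type of the marked curve, and then observing that dualization translates this classification, together with its closure order, verbatim into the language of tropical combinatorial types. First I would recall that two stable $n$-marked rational curves lie in the same stratum exactly when there is a marking-preserving homeomorphism between them, and that such a homeomorphism is recorded losslessly by the incidence data of components, nodes, and marked points---precisely the data assembled in $G_X$. Thus dualization is well defined on strata: an isomorphism $f\colon X \to X'$ carries components to components, nodes to nodes, and marks to marks compatibly with incidence, hence induces a label-preserving isomorphism $G_X \cong G_{X'}$, so the combinatorial type of $G_X$ (in the sense of \ref{combtype}) depends only on the stratum. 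I would also note that the open stratum of smooth curves dualizes to the single-vertex tree with $n$ legs (the cone point of $\M_{0,n}$), so it is cleanest to prove the bijection for all strata and read off the boundary statement as the restriction to those types carrying at least one bounded edge.

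For injectivity I would reconstruct $X$ from $G_X$ up to its stratum: assign a copy of $\p^1$ to each internal (component) vertex, glue two copies at a node for each bounded edge, and mark a point for each unbounded leaf edge. Since the positions of the special points on each component are exactly the interior moduli of the stratum, they do not affect the stratum, so $G_X$ is a complete invariant and equal dual graphs force equal strata. For surjectivity I would start from an arbitrary genus-$0$ combinatorial type---a tree with $n$ labeled legs, all of whose internal vertices have valence $\geq 3$---and build a curve by the same recipe, choosing on each $\p^1$ the required special points to be distinct (possible since each component carries finitely many special points and $\p^1(\C)$ is infinite). The result is connected with no circuits because the graph is a tree, each component has at least three special points by the valence condition, and there are $n$ distinct marked points; hence it is a stable $n$-marked rational curve whose dual graph is the prescribed type. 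The genus-$0$ (tree) and stability conditions match on the two sides by construction, so dualization is a bijection.

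For the degeneration statement I would use the specialization description of the stratification: a stratum $Y'$ lies in $\overline{Y}$ iff the $Y'$-curves degenerate the $Y$-curves, which on dual graphs means that $G_Y$ is obtained from $G_{Y'}$ by contracting the bounded edges corresponding to the nodes that are smoothed when one deforms a $Y'$-curve to a $Y$-curve. Unbounded edges, i.e. marks, are never contracted, so the $n$ legs are preserved throughout. On the tropical side, the polyhedral structure on $\M_{0,n}$ described above identifies edge-contraction with passage to a face, i.e. with tropical degeneration of combinatorial types. Combining these, $Y' \subseteq \overline{Y}$ yields that $G_Y$ is a contraction of $G_{Y'}$, which is exactly the assertion that the type of $G_Y$ is a degeneration of the type of $G_{Y'}$; note that this \emph{reverses} the closure order, matching the ``degeneration reversing'' remark.

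The hard part will be the classical input rather than the tropical bookkeeping: namely the two structural facts that the strata of $\overline{M}_{0,n}$ are classified by dual graph and that closure of strata corresponds exactly to edge-contraction of dual graphs (equivalently, that smoothing a single node contracts the corresponding bounded edge). I would either cite the standard theory of stable curves for these or, in the genus-$0$ case, prove them directly by analyzing one-parameter smoothings of a single node and checking that each such smoothing merges two adjacent components into one while contracting their shared edge. The remaining care is purely notational: keeping the direction of ``degeneration'' straight, since ``more degenerate'' classically (more nodes) corresponds to ``more bounded edges'' and hence to a \emph{larger} tropical cone, so that the tropical degeneration runs opposite to the classical one.
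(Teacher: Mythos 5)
Your proposal is correct and follows essentially the same route as the paper: both arguments rest on the observation that the dual graph records exactly the incidence data of components, nodes, and marks, and that classical degeneration (collision of special points onto a new component) dualizes to the reverse of tropical edge contraction. Your write-up is in fact more complete than the paper's sketch, which only treats the extreme strata explicitly and describes the degeneration correspondence informally, whereas you separate out well-definedness, injectivity, surjectivity, and the closure-order compatibility, and you correctly flag (and keep straight) the order reversal between classical and tropical degeneration.
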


\begin{proof}
The most general curves classically are smooth and hence dualize to a graph with a single vertex of degree $n$ with $n$ unbounded edges. This tropical curve is a degeneration of all combinatorial types. The most degenerate classical curves have exactly three special points, be they marked points or intersections. The dual of such a degenerate classical curve will be a tree in which every vertex is degree $1$ or $3$, a condition that we will later call \textit{trivalence}.

Classical degeneration involves the collision of at least two special points from a component with at least $4$; these colliding points end up on a new irreducible component. The dualization of this process picks a vertex of degree at least $4$, splits its edges between two vertices and adds an edges between these vertices. This description is exactly the reverse of a degeneration of a stable tropical curve.
\end{proof}

\begin{remark}
All of the positive dimensional cones in Mikhalkin's construction already correspond to the boundary of the classical moduli space. As a result, the polyhedral complex constructed below will be open, not compact.
\end{remark}

Finally, we are ready to define tropical ramified covers. Taking the analogy from classical geometry, we will want to talk about tropical covers of stable graphs with ramification profiles at the degree $1$ vertices (the marked points). In addition, classical admissible covers allowed for a ramification profile at the intersection point. This point has become the edge between the high-degree vertices, so we will also want to assign ramification profiles to each of them.

\begin{dfn}
Consider $\overline{\sigma} = \{\sigma_1,\ldots,\sigma_n\}$, where each $\sigma_i$ is an integer partition of $d$. Pick a tropical curve $G$ with $n$ marked points; then a \textbf{tropical ramified cover} with ramification profile $\overline{\sigma}$ will be a copy of $G$ with the following integer partitions associated to edges of $G$. Associate $\sigma_i$ to the $i^{\mbox{th}}$ marked point; also assign an integer partition of $d$ to each of the finite edges of $G$.
\end{dfn}

\begin{remark}
This definition of a tropical ramified cover lists the codomain curve and the ramification profile, which seems substantively different from the classical definition of a ramified cover because we do \textit{not} actually include the tropical covering map. This leaves open the question of whether the information included above is sufficient for specifying a more analogous definition of a ``tropical admissible cover" and whether that more geometric version of a ``tropical admissible cover" can be realized as the tropicalization of the classical cover with the same data. The authors of \cite{CJM} give such a definition in the case of only two specified marked points, and their computations do use exactly the information that I specify here.

In the maximally degenerate situation of a trivalent graph for the tropical curve, each vertex has exactly three edges, meaning three ramification points. There is a unique such cover classically, and the required gluing is also specified for us, so this claim seems reasonable.

However, it \textit{is} clear how to get one of our tropical ramified covers from an admissible cover. Simply take the marked stable curve that is its codomain and dualize it to a graph. To each marked point, assign the integer partition given by the ramification profile over that point. To each finite edge, associate the ramification profile coming from the ramification over the associated non-singluar point.
\end{remark}

\textit{In summary:} There is already a tropical moduli space of genus $0$ curves with $n$ marked points, $\M_{0,n}$, which was created by Mikhalkin (\cite{MikM}). The goal of this document is, for each ramification profile $\overline{\sigma}$, to give the set of tropical ramified covers the structure of a moduli space, $\H(\overline{\sigma})$, such that
\begin{itemize}
\item $\H(\overline{\sigma})$ is a connected, polyhedral complex, and
\item $\H(\overline{\sigma})$ has a natural morphism of polyhedral complexes $\phi: \H(\overline{\sigma}) \to \M_{0,n}$ such that $\deg(\phi)$ encodes the Hurwitz number $h(\overline{\sigma})$.
\end{itemize}

\chapter{Preliminaries}
\textit{We establish the definitions and lemmas needed for the original mathematics in the following chapter.}

\section{Graphs}
\textit{Having reframed the discussion of tropical curves in the previous section in terms of metric graphs, we must solidify the foundations of our perspective in the language of graph theory.}

\begin{notation}
A graph is an ordered pair $G= (V,E)$, with $V$ a finite set (called the \textbf{vertices}) and $E$ a set of (unordered) pairs of elements from $V$ (called the \textbf{edges}). Note that we are disallowing ``multiple edges" in graphs. The vertices appearing in an edge are called its \textbf{endpoints}. A \textbf{tree} is a connected graph without circuits. The \textbf{degree} of a vertex $v \in V$ is the number of times $v$ appears as an endpoint of elements in $E$. Vertices of degree $1$ are called \textbf{leaves}.
\end{notation}

\begin{dfn}
Edges containing a leaf will be called \textbf{unbounded edges}; the other edges will be called \textbf{bounded edges}. Vertices that are not leaves will be called \textbf{internal vertices}.
\end{dfn}

\begin{thm}\label{leavesexist}
Any tree with at least one edge has at least two leaves.
\end{thm}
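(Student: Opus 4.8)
The plan is to prove this via a \emph{longest path} argument, which has the virtue of avoiding any reliance on the (as yet unestablished) count of edges in a tree. First I would observe that since $G$ is connected, finite, and has at least one edge, among all paths in $G$ there is one, say $v_0, v_1, \ldots, v_k$, using a maximal number of edges; this maximum $k$ satisfies $k \geq 1$, and by the definition of a path its vertices $v_0, \ldots, v_k$ are distinct. In particular $v_0 \neq v_k$, so it suffices to show that both $v_0$ and $v_k$ are leaves.

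The core step is to show that $v_0$ has degree $1$. Suppose instead $\deg(v_0) \geq 2$. Then $v_0$ is an endpoint of some edge other than $\{v_0, v_1\}$, giving a neighbor $w \neq v_1$. I would split into two cases. If $w$ does not lie on the path, then $w, v_0, v_1, \ldots, v_k$ is a path with $k+1$ edges, contradicting the maximality of $k$. If $w$ does lie on the path, say $w = v_j$ with $j \geq 2$, then the edges $\{v_0,v_1\}, \{v_1,v_2\}, \ldots, \{v_{j-1},v_j\}$ together with $\{v_j,v_0\}$ form a circuit, contradicting the hypothesis that a tree has no circuits. Either way we reach a contradiction, so $\deg(v_0) = 1$. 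The identical argument applied to the other end shows $\deg(v_k) = 1$, and since $v_0 \neq v_k$ these are two distinct leaves.

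I expect the main (and essentially only) obstacle to be the bookkeeping in the case where the extra neighbor $w$ already lies on the path: one must check that the resulting closed walk is genuinely a circuit in the sense forbidden for trees (a nontrivial cycle through distinct vertices), which is precisely where the circuit-free hypothesis gets consumed. A secondary point worth stating explicitly is the existence of a maximal-length path, which follows from finiteness of $G$ since every path has length at most $|V|-1$. I would also remark in passing that one could instead count degrees against the number of edges, but that route presupposes the lemma that a tree on $n$ vertices has $n-1$ edges, which is itself typically proved \emph{using} the present statement; the longest-path argument is therefore the cleaner and self-contained choice here.
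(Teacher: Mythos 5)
Your proof is correct, but it takes a genuinely different route from the paper's. The paper argues by strong induction on the number of edges: it observes that deleting any edge disconnects the tree into two subtrees, starts from the unique one-edge tree (which has two leaves), and lets the induction supply leaves of the subtrees that survive as leaves of the original tree. You instead take a path of maximal length and show both of its endpoints are leaves, ruling out a higher-degree endpoint by the two cases (a neighbor off the path extends the path, contradicting maximality; a neighbor on the path closes a circuit, contradicting acyclicity). Your version is more self-contained and fully written out --- the paper's proof is only a two-sentence sketch, and its inductive step (that at most one leaf of each subtree is destroyed when the deleted edge is restored) is left implicit --- whereas the paper's edge-deletion viewpoint has the side benefit of establishing the fact, used repeatedly later (e.g.\ in Notation \ref{partitionpi} and Lemma \ref{distinctpartitions}), that removing an edge splits a tree into exactly two components. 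Your closing remark is also apt: the degree-counting alternative would circularly presuppose the edge count of a tree, and you correctly avoid it. One small point you flag yourself and handle adequately: since the paper's definition of a graph disallows multiple edges, a second edge at $v_0$ really does produce a neighbor $w\neq v_1$, and the closed walk $v_0,v_1,\ldots,v_j,v_0$ through distinct vertices is a genuine circuit.
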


\begin{proof}
Notice that removing any edge from $T$ disconnects it into two trees. This standard result can be shown by removing any edge and using strong induction on the number of edges in the tree, starting with the unique tree with $1$ edge (and two leaves).
\end{proof}

\begin{notation}\label{partitionpi}
For a tree $T$, let $L(T)$ be the set of leaves of $T$. For any edge $e$ of $T$, let $\pi(e) = \{S(e),S'(e)\}$ be the (set) partition of $L(T)$ obtained by deleting $e$ from $T$ and partitioning the leaves by the connected component in which they lie.

\begin{remark}
Notice also that \ref{leavesexist} implies that neither $S(e)$ nor $S'(e)$ can be empty. Either $e$ is an unbounded edge, in which case it clearly separates one leaf from the others, or $e$ is bounded, and the components have other edges and hence at least $2$ leaves each.
\end{remark}

Let $T = (V,E)$ and $T' = (V',E')$ be trees, and let $f: V \to V'$ be a bijection. Then the function $f$ extends to subsets of $L(T)$, writing $f(\pi(e)) = \{f(S(e)),f(S'(e))\}$.
\end{notation}

\begin{dfn}
A graph is said to be \textbf{stable} if it has no vertices of degree $2$.
\end{dfn}

\begin{remark}
This definition of stable is the tropicalization (dualization) of the classical definition of stability. In addition, degree $2$ vertices allow for a infinite number of metric structures, akin to the infinite number of automorphisms of $\p^1$ with only $2$ marked points.
\end{remark}

\begin{lemma}\label{distinctpartitions}
Let $T = (V,E)$ be a stable tree with $e, e' \in E$. If $e \neq e'$, then $\pi(e) \neq \pi(e')$.
\end{lemma}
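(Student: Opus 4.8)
The plan is to argue by contradiction: assuming $e \neq e'$ and $\pi(e) = \pi(e')$, I will exhibit a vertex of degree $2$, contradicting stability. The first step is a purely combinatorial decomposition of $T$ along the two edges. Deleting an edge from a tree raises the number of connected components by one, so removing both $e$ and $e'$ yields exactly three components. Contracting each component to a point turns $T$ into a connected graph on three vertices with the two edges $e, e'$ and no cycle, i.e.\ a path; in particular the two edges cannot join the same pair of components, since that would either leave a component disconnected or create a circuit. Hence the three components can be labelled $C_1, C_2, C_3$ so that, after possibly swapping the names of $e$ and $e'$, the edge $e'$ joins $C_1$ to the middle component $C_2$ and the edge $e$ joins $C_2$ to $C_3$.

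Second, I would read the two partitions off this picture. Deleting $e$ separates the leaves lying in $C_3$ from those lying in $C_1 \cup C_2$, so $\pi(e) = \{\,L(T) \cap C_3,\ L(T) \cap (C_1 \cup C_2)\,\}$, and similarly $\pi(e') = \{\,L(T) \cap C_1,\ L(T) \cap (C_2 \cup C_3)\,\}$. Matching these two-block partitions gives two cases. In the first, $L(T) \cap C_3 = L(T) \cap C_1$; these are leaf sets of disjoint components, so they can be equal only if both are empty, which makes a block of $\pi(e)$ empty and contradicts the Remark following Notation \ref{partitionpi} that neither $S(e)$ nor $S'(e)$ is empty. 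The remaining case forces $L(T) \cap C_2 = \emptyset$: the middle component carries no leaf of $T$.

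Third, and this is where stability does the work, I would show a leaf-free middle component cannot occur in a stable tree. The component $C_2$ is itself a tree, attached to the rest of $T$ only through the endpoint $a$ of $e$ and the endpoint $b$ of $e'$ lying in $C_2$. Every vertex of $C_2$ other than $a$ or $b$ has the same degree in $C_2$ as in $T$, so a degree-$1$ vertex of $C_2$ distinct from $a, b$ would be a leaf of $T$ inside $C_2$, which is impossible; hence every leaf of $C_2$ lies in $\{a,b\}$. By Theorem \ref{leavesexist}, if $C_2$ has an edge it has at least two leaves, forcing $a \neq b$ and making $C_2$ exactly the path from $a$ to $b$; then either the single connecting edge gives $a$ degree $2$ in $T$, or an interior vertex of the path has degree $2$ in $T$. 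If instead $C_2$ is a single vertex, that vertex is an endpoint of both $e$ and $e'$ and so has degree $2$. In every sub-case $T$ acquires a degree-$2$ vertex, contradicting stability and finishing the proof.

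The routine parts are the component bookkeeping of the first step and the partition matching of the second. I expect the crux to be the third step: converting ``the middle component has no leaf'' into ``there is a degree-$2$ vertex.'' The care required there is to note that the only vertices of $C_2$ whose $T$-degree can exceed their $C_2$-degree are the two attachment points $a$ and $b$, which pins down the leaves of the subtree $C_2$ and makes the path-versus-single-vertex dichotomy expose the forbidden vertex.
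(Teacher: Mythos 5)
Your proof is correct, but it takes a genuinely different route from the paper's. The paper argues directly: it takes the unique path containing $e$ and $e'$ and shows that the partition changes strictly and monotonically from one edge of that path to the next, using stability at each internal vertex of the path to produce a branch edge whose far side contributes at least one leaf that migrates from $S$ to $S'$. You instead argue by contradiction via a three-component decomposition: deleting both edges leaves components $C_1, C_2, C_3$ arranged in a path, the unordered-pair matching of $\pi(e)$ with $\pi(e')$ forces either an empty block (excluded by the Remark after Notation \ref{partitionpi}) or a leaf-free middle component, and a subtree attached to the rest of $T$ at only two points and containing no leaf of $T$ must, by Theorem \ref{leavesexist}, be a single vertex or a bare path --- either way exhibiting a degree-$2$ vertex. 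The underlying use of stability is the same in spirit (the region strictly between $e$ and $e'$ must carry a leaf of $T$, and that leaf separates the two partitions), but your version makes explicit two things the paper treats informally: the case analysis on how the two blocks of the unordered partitions can match up, and the reason the roles of $S$ and $S'$ cannot simply swap. The paper's version, for its part, establishes the slightly stronger statement that \emph{all} partitions along a path are pairwise distinct as it goes. Both are valid; yours is the more carefully quantified of the two.
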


\begin{proof}
If $|E| = 1$, then the lemma is vacuously true. Otherwise, pick two distinct edges $e$ and $e'$ in $E$. Find a path in $T$ containing $e$ and $e'$ as follows. If you remove $e$ from $T$, one of its endpoints is not in the component of what remains that contains $e'$; call this endpoint $v$. Similarly, let $w$ be the endpoint of $e'$ not in the component with $e$. Then the (unique) path from $v$ to $w$, $P_{(v,w)}$, contains both $e$ and $e'$. Now I will show that the partitions associated to edges in a path are all distinct, showing that $\pi(e) \neq \pi(e')$.

Consider $\pi(e) = \{S(e),S'(e)\}$, labeled so that $S(e)$ comes from the component containing $e'$. Let $v_1$ be the other endpoint of $e = \{v,v_1\}$. Because $T$ is stable, $\deg(v_1) \geq 3$, so there is at least one edge coming from $v$ that is not included in $P_{(v,w)}$, $\hat{e}_1$. Let $S(\hat{e}_1)$ be the part coming from the component not containing $P_{(v,w)}$. Then $S(\hat{e}_1)$ is a subset of $S(e)$. Moreover, $$\pi(e_1) = \{S(e) \setminus \left(\cup S(\hat{e}_1)\right),S'(e) \cup \left(\cup S(\hat{e}_1)\right)\}.$$
The figure below shows this in a simple case.
\begin{center}
\psset{xunit=1.0cm,yunit=1.0cm,algebraic=true,dotstyle=*,dotsize=5pt 0,linewidth=0.8pt,arrowsize=3pt 2,arrowinset=0.25}
\begin{pspicture*}(-5.5,-.1)(5,2)
\psline(-4.5,0)(-1.5,0)
\psline(-1.5,0)(1.5,0)
\psline(1.5,0)(4.5,0)
\psline(-1.5,0)(-1.5,1.3)
\psline(1.5,0)(1.5,1.3)
\psdots(-4.5,0)
\rput[bl](-4.4,0.2){$v$}
\psdots(-1.5,0)
\rput[bl](-1.4,0.2){$v_1$}
\psdots(1.5,0)
\psdots(4.5,0)
\rput[bl](4.6,0.2){$w$}
\rput[bl](-3,0.2){$e$}
\rput[bl](0,0.2){$e_1$}
\rput[bl](3,0.2){$e'$}
\psdots(-1.5,1.3)
\rput[bl](-1.9,0.6){$\hat{e}_1$}
\psdots(1.5,1.3)
\rput[tl](-5.5,.3){$S'(e)$}
\rput[tl](-1.5,1.85){$S(\hat{e}_1)$}
\end{pspicture*}
\end{center}
In short, at each step along the path $P_{(v,w)}$, some leaves move from $S$ to $S'$, and they move only in that direction. The only remaining concern is that we will somehow end up with a new partition with the roles of $S$ and $S'$ switched. However, the edges of $S'(e)$ are fixed, so there is no chance that $S$ and $S'$ switch roles.
\end{proof}

\begin{dfn}\label{graphisodfn}
Let $G_1 = (V_1,E_1)$ and $G_2 = (V_2,E_2)$ be graphs. A \textbf{morphism} $F: G_1 \to G_2$ is a function $F_V: V_1 \to V_2$ such that
$$F_E(\{v,w\}) := \{F_V(v),F_V(w)\}$$ is a function $F_E: E_1 \to E_2$. A morphism $F$ is an \textbf{isomorphism} if $F_V$ and $F_E$ are bijections.
\end{dfn}

\begin{dfn}\label{topotype}
The \textbf{topological type} of a stable graph is its isomorphism class in the sense of \ref{graphisodfn}.
\end{dfn}

\begin{prop}\label{graphisomorph}
Let $T_1 = (V_1,E_1)$ and $T_2 = (V_2,E_2)$ be stable trees, and let $f: L(T_1) \to L(T_2)$ be a bijection. Suppose that
\begin{equation}
\{\pi(e_2) | e_2 \in T_2\} = \{f(\pi(e_1)) | e_1 \in T_1\}.
\end{equation}
Then there exists an isomorphism of graphs, $F: T_1 \to T_2$, such that the restriction of $F_V$ to $L(T_1)$ equals $f$. In other words, $f$ can be extended to an isomorphism of graphs.
\end{prop}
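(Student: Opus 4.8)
The plan is to first build the edge bijection and then use it to reconstruct the vertex incidence. By Lemma~\ref{distinctpartitions}, on a stable tree the assignment $e \mapsto \pi(e)$ is injective, so it identifies $E_1$ with the split set $\{\pi(e_1) : e_1 \in E_1\}$ and $E_2$ with $\{\pi(e_2) : e_2 \in E_2\}$. Since $f$ is a bijection of leaves it induces a bijection on set-partitions of the leaves, and the hypothesis says it carries the first split set onto the second. Composing, I obtain a bijection $F_E : E_1 \to E_2$ determined by $\pi(F_E(e)) = f(\pi(e))$ for every edge $e$. The real content is to promote this to a graph isomorphism, that is, to produce a compatible bijection $F_V$ on vertices restricting to $f$ on the leaves.

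The key observation I would isolate is that each vertex $v$ of a stable tree is recorded by its \emph{branch partition}: the partition of $L(T)$ obtained by grouping the leaves according to which edge at $v$ leads to them. For an internal vertex this partition has $\deg(v)$ parts, each part being one side $S(e)$ of the split of an edge $e$ incident to $v$; a leaf has the trivial one-part partition. Two facts make branch partitions useful. First, distinct vertices of a tree have distinct branch partitions, so a vertex is recovered from its partition. Second, the branch partition at the neighbor $v$ of a given leaf $\ell$ can be read off from the split set alone: its parts are $\{\ell\}$ together with the inclusion-maximal members of the collection $\{S : \{S,S'\} = \pi(e) \text{ for some } e,\ \ell \notin S,\ S \neq L(T)\setminus\{\ell\}\}$, where discarding $L(T)\setminus\{\ell\}$ removes the pendant edge at $\ell$. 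Because $f$ is an inclusion-preserving bijection of leaves carrying one split set to the other, and $f(L(T_1)\setminus\{\ell\}) = L(T_2)\setminus\{f(\ell)\}$, it sends the branch partition at the neighbor of $\ell$ to that at the neighbor of $f(\ell)$; in particular neighbors of corresponding leaves have equal degree.

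With these tools I would argue by induction on the number of leaves $n$. The base cases are immediate: a two-leaf stable tree is a single edge, and a three-leaf stable tree is forced to be the trivalent tripod, on which any $f$ extends by sending center to center. For the inductive step, fix a leaf $\ell$ of $T_1$ and its image $\ell' = f(\ell)$, and prune each: delete the leaf and its edge, suppressing the resulting degree-two vertex when the neighbor had degree three. The leaf-neighbor computation guarantees that suppression occurs for $T_1$ exactly when it occurs for $T_2$, so the pruned trees $T_1^-$ and $T_2^-$ are again stable with $n-1$ leaves. I would then check that deleting $\ell$ from every split (discarding the now-trivial pendant split, and observing that the two edges flanking a suppressed trivalent vertex collapse to a single coincident split) transports the split set of $T_1^-$ onto that of $T_2^-$ via the restriction of $f$. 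The induction hypothesis yields an isomorphism $F^- : T_1^- \to T_2^-$ extending $f|_{L(T_1)\setminus\{\ell\}}$, and since branch partitions are isomorphism invariants that determine vertices, $F^-$ automatically carries the attachment site of $\ell$ (either the surviving neighbor $v$, or the edge into which $v$ was suppressed) to that of $\ell'$. Re-attaching $\ell$ and $\ell'$ and setting $F_V(v) = v'$ extends $F^-$ to the desired $F$.

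The main obstacle is the bookkeeping in the inductive step rather than any single deep idea: I must verify that the split-to-split correspondence survives pruning, handle the degree-three suppression so that two distinct edges of $T$ collapse to one edge of $T^-$ with matching splits on both sides, and confirm that the attachment site is genuinely forced by the split data so that gluing $\ell$ back is consistent with $F^-$. The conceptual crux underlying all of this, that the entire incidence structure and not merely the edge set is dictated by the collection of splits, is exactly what the branch-partition characterization of leaf neighbors is designed to supply.
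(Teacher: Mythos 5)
Your proposal is correct in outline, but it takes a genuinely different route from the paper. The paper also argues by induction, but on the number of \emph{edges} rather than the number of leaves: it selects a bounded edge $e_1$ of $T_1$, locates the edge $e_2$ of $T_2$ with $\pi(e_2) = f(\pi(e_1))$ (unique by Lemma~\ref{distinctpartitions}), severs each into two pendant edges with freshly created leaves $\hat{v}_i$, $\hat{w}_i$, checks that the split-set hypothesis descends to the two resulting pairs of smaller stable trees, applies the inductive hypothesis to each pair, and glues the two isomorphisms back along the cut. You instead prune a single leaf and re-attach it, which forces you to introduce the branch-partition invariant to pin down the attachment site; the paper avoids this entirely because the declared matching $\hat{v}_i \mapsto \hat{w}_i$ carries the attachment data for free. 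What your route buys is a conceptual explanation of \emph{why} the splits determine the full incidence structure (the branch-partition characterization of a leaf's neighbor) and a single recursive call instead of two; what it costs is the suppression case analysis and two sub-lemmas you defer: that distinct internal vertices of a stable tree have distinct branch partitions, and, more substantively, that when you delete $\ell$ from every split the \emph{only} pair of distinct edges of $T_1$ whose splits become equal is the pair flanking a degree-three neighbor of $\ell$ (this is true --- two far sides $A_1, A_2$ with $A_1 \sqcup A_2 = L(T_1)\setminus\{\ell\}$ force both edges to be incident to a trivalent neighbor of $\ell$, using stability --- but without it the pruned split sets need not biject with the pruned edge sets and the inductive hypothesis cannot be invoked). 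With those verifications supplied, your argument goes through.
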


\begin{proof}
First notice that \ref{distinctpartitions} tells us that the partitions coming from edges within a single graph are distinct. Under the hypotheses of equation $(2.1)$, there is a pairing of the partitions from the two graphs. This implies that $|E_1| = |E_2|$.

We will prove this lemma by induction on $n = |E_1|$, the number of edges of $T_1$. Suppose $n = 1$; then $T_1$ and $T_2$ are both the unique tree with one edge. This tree has exactly two vertices, both of which are leaves. So $F_V := f $ is already a bijection on the full set of vertices. By inspection, the bijection $F_V$ does induce the map $F_E$ that sends the one edge of $T_1$ to the one edge of $T_2$ and hence $F$ is an isomorphism of the trees.

Let $n \in \N$ with $n \geq 2$ and suppose, for any $j \in \N$ with $j < n$ the statement of our theorem is true for trees with $j$ edges. Let $T_1$ and $T_2$ be two trees satisfying equation $(2.1)$ such that $T_1$ has $n$ edges. Recall that the unbounded edges partition the leaves of a tree into a singleton and a remaining leaves. As a result, if $T_1$ does not have any bounded edges, then neither does $T_2$. In this case, they are both the star of $n$ edges, which are clearly isomorphic in a way that extends the isomorphism on the leaves.

Otherwise, choose a bounded edge $e_1 = \{v_1,v_2\}$ of $T_1$. Add two new vertices, $\hat{v}_1$ and $\hat{v}_2$ to $V_1$; remove $e_1$ from $E_1$ and replace it with two new edges, $\{v_1,\hat{v}_1\}$ and $\{v_2,\hat{v}_2\}$. This is essentially breaking the edge $e_1$ into two unbounded edges. The resulting graph is the union of two trees. Label these trees $T_{1,1}$ and $T_{1,2}$, with the leaves $S(e_1) \cup \{\hat{v}_1\}$ in $T_{1,1}$ and the leaves $S'(e_1) \cup \{\hat{v}_2\}$ in $T_{1,2}$.

\begin{center}
\psset{xunit=1.0cm,yunit=1.0cm,algebraic=true,dotstyle=*,dotsize=3pt 0,linewidth=0.8pt,arrowsize=3pt 2,arrowinset=0.25}
\begin{pspicture*}(-1.1,-0.1)(9.4,2.5)
\psline(0,0)(-0.49,0.87)
\psline(0,0)(1,-0.01)
\psline(0,0)(0.51,0.86)
\psline(0.51,0.86)(0.88,1.79)
\psline(0.51,0.86)(1.5,1)
\psline(1.5,1)(2.42,1.4)
\psline(1.5,1)(1.98,0.12)
\psline(2.42,1.4)(2.38,2.4)
\psline(2.42,1.4)(3.23,1.99)
\psline(0,0)(-1,-0.01)
\psline(6,0)(5,-0.01)
\psline(6,0)(5.51,0.87)
\psline(6,0)(7,-0.01)
\psline(6,0)(6.51,0.86)
\psline(6.51,0.86)(6.88,1.79)
\psline(7.5,1)(8.42,1.4)
\psline(7.5,1)(7.98,0.12)
\psline(8.42,1.4)(8.38,2.4)
\psline(8.42,1.4)(9.23,1.99)
\psline{->}(3.3,1)(4.3,1)
\psline(6.49,0.87)(6.83,0.9)
\psline(7.12,0.95)(7.5,1)
\rput[tl](-0.61,2.3){$T_1$}
\rput[tl](5.6,1.59){$T_{1,1}$}
\rput[tl](8.73,1.09){$T_{1,2}$}
\psdots(0,0)
\psdots(1.5,1)
\psdots(-0.49,0.87)
\psdots(1,-0.01)
\psdots(0.88,1.79)
\psdots(2.42,1.4)
\rput[bl](.95,0.55){$e_1$}
\rput[bl](6.75,0.4){$\hat{v}_1$}
\rput[bl](7,1.1){$\hat{v}_2$}
\psdots(1.98,0.12)
\psdots(2.38,2.4)
\psdots(3.23,1.99)
\psdots(0,0)
\psdots(0.49,0.87)
\psdots(0,0)
\psdots(-1,-0.01)
\psdots(0,0)
\psdots(6.49,0.87)
\psdots(5,-0.01)
\psdots(5.51,0.87)
\psdots(6,0)
\psdots(7,-0.01)
\psdots(6,0)
\psdots(6.88,1.79)
\psdots(7.5,1)
\psdots(8.42,1.4)
\psdots(7.5,1)
\psdots(7.98,0.12)
\psdots(8.42,1.4)
\psdots(8.38,2.4)
\psdots(8.42,1.4)
\psdots(9.23,1.99)
\psdots(6.83,0.9)
\psdots(7.12,0.95)
\end{pspicture*}
\end{center}

Here is another way to think of this construction: the tree $T_{1,j}$ was formed by crushing all of $T_1$ separated from $v_j$ by $v_{(1-j)}$ to the point $v_{(1-j)}$ and calling that crushed point $\hat{v}_{(1-j)}$.

By hypothesis, there is an edge $e_2$ such that $\pi(e_2) = f(\pi(e_1))$. Notice that $e_2$ must also be bounded. As with $T_1$, break $e_2$ into two unbounded edges by adding $\hat{w}_1$ and $\hat{w}_2$ to $V_2$, labelled so that $\hat{w}_1$ lies in the component containing the leaves $f(S(e_1))$. Then delete $e_2$ from $E_2$ and add $\{w_1,\hat{w}_1\}$ and $\{w_2,\hat{w}_2\}$ to $E_2$. Label these trees $T_{2,1}$ and $T_{2,2}$, with the leaves $f(S(e_1)) \cup \{\hat{w}_1\}$ in $T_{2,1}$ and the leaves $f(S'(e_1)) \cup \{\hat{w}_2\}$ in $T_{2,2}$.

Notice that $f$ restricts to bijections $f_i: L(T_{1,i}) \setminus \{\hat{v}_i\} \to L(T_{2,i}) \setminus \{\hat{w_i}\}$; extend these functions to bijections $\hat{f}_i: L(T_{1,i}) \to L(T_{2,i})$ by setting $\hat{f}_i(v_i) = w_i$. If equation $(2.1)$ for $T_1$ and $T_2$ descends to equation $(2.1)$ for each of the pairs $T_{1,j}$ and $T_{2,j}$, then we will be able to use the inductive hypothesis.

Notice that, for any edge $e \neq e_1$ in $V_1$, $\pi(e)$ is related to $\pi(e_1) = \{S(e_1),S'(e_1)\}$. If $e$ is in $T_{1,1}$, then $e$ separates some $S(e_1)$ and moves it into $S'(e_1)$. A similar discussion holds for any of the other three new trees. This implies that the original pairing given by equation $(2.1)$ splits into two pairings for the new trees.

These original partitions are not maintained in the new trees; there are different leaves. However, the relation is straight-forward. Consider an edge $e$ from $T_1$, but think of it in $T_{1,j}$. I claim that we can compute $\pi(e)_{T_{1,j}}$ from $\pi(e)$ as follows. Notice that all of $L(T_{1,(1-j)}) \setminus \{\hat{v}_{(1-j)}\}$ lives in one part of the partition $\pi(e)$. To compute $\pi(e)_{T_{1,j}}$, simply find this collection of leaves and replace it with $\hat{v}_j$. A similar discussion holds for the edges of $T_{2,j}$. This shows that equation $(2.1)$ does descend to the appropriate equations for the pairs of new trees.

In short, removing these edges gives two pairs of sub-trees satisfying all of the hypotheses of the theorem. Moreover, each subtree $T_{1,i}$ has fewer edges than $T_1$, so by strong induction, each of $\hat{f}_i$ extends to an isomorphism $F_i: T_{1,i} \to T_{2,i}$. Taken together, these isomorphisms are a bijection $F_V: V_1 \cup \{\hat{v}_1,\hat{v}_2\} \to V_2 \cup \{\hat{w}_1,\hat{w}_2\}$ that induces a bijection $\hat{F}_E: (E_1 \cup \{\{v_1,\hat{v}_1\}, \{v_2,\hat{v}_2\}\}) \setminus \{e_1\} \to (E_2 \cup \{\{w_1,\hat{w}_1\},\{w_2,\hat{w}_2\}\}) \setminus \{e_2\}$. Notice that $F_V(\hat{v}_i) = \hat{w}_i$, so $F_V(v_i) = w_i$ so that the edge $\{v_i,\hat{v}_i\}$ is mapped correctly. Hence $F_E(e_1) = F_E(\{v_1,v_2\}) = \{F_V(v_1),F_V(e_2)\} = \{w_1,w_2\} = e_2$. Thus $F: T_1 \to T_2$ is an isomorphism.
\end{proof}

\begin{dfn}\label{degeneration}
Consider a graph $G = (V,E)$ with a bounded edge $e = \{v_1,v_2\}$. Build a new new graph $G' = (V',E')$ as follows.
\begin{itemize}
\item The set $V'$ is built from $V$ by removing $v_2$.
\item The set $E'$ is built from $E$ by removing $e$ and then replacing all instances of $v_2$ in edges with $v_1$.
\end{itemize}
Essentially, the graph $G'$ is built from $G$ by shrinking the edge $e$ to a point. As a result, we say that $G'$ is the \textbf{degeneration} of $G$ by $e$.
\end{dfn}

\begin{remark}
We chose the term degeneration because of the parallel to classical geometry, but this graph-theoretic construction is also called \textit{contraction}. Degeneration of a graph by the edge $e$ is defined by removing one of the endpoints of $e$ and \textit{renaming} certain endpoints. Instead, this definition could have been framed in terms of \textit{identifying} the two endpoints of $e$. As a result we can talk about degenerating a graph by multiple edges simultaneously without worrying about the order of operations.
\end{remark}

\begin{dfn}
A graph is said to be \textbf{trivalent} if all of its vertices have degree $1$ or $3$.
\end{dfn}

\begin{remark}
The trivalent graphs are the duals of the maximally degenerate classical curves but are the most general tropical curves.
\end{remark}

\begin{lemma}\label{n-3}
Let $G$ be a trivalent tree with $|L(G)| = n$. If $n \geq 3$, then $G$ has $n-3$ bounded edges.
\end{lemma}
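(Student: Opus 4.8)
The plan is to count the edges of $G$ in two different ways: first globally, using the standard tree identity together with the handshake lemma, and then by separating the edges into the bounded and unbounded classes.

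First I would pin down the number of vertices. Since $G$ is a tree, its number of edges is $|V(G)| - 1$. Write $m$ for the number of internal vertices; by trivalence these all have degree $3$, while the $n$ leaves have degree $1$. The handshake lemma says the sum of the vertex degrees equals twice the number of edges, so
$$n + 3m = 2\bigl(n + m - 1\bigr),$$
which solves to $m = n - 2$. Hence $|V(G)| = n + m = 2n - 2$ and $|E(G)| = 2n - 3$.

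Next I would count the unbounded edges. An unbounded edge is one containing a leaf, and I claim that when $n \geq 3$ every such edge contains exactly one leaf, so that the unbounded edges are in bijection with $L(G)$ and there are precisely $n$ of them. The key point is that an edge with two leaf endpoints would, since both its endpoints have degree $1$, be a connected component by itself; as $G$ is connected this would force $G$ to be the single-edge tree with $n = 2$, which is excluded by the hypothesis $n \geq 3$. Therefore each unbounded edge meets exactly one leaf and one internal vertex, and distinct leaves yield distinct unbounded edges.

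Finally, the bounded edges are exactly those that are not unbounded, so their number is
$$|E(G)| - n = (2n - 3) - n = n - 3,$$
as claimed. I expect the only genuine subtlety to be the bijection between leaves and unbounded edges—specifically excluding a leaf-to-leaf edge—which is exactly where the hypothesis $n \geq 3$ is used; once $m = n - 2$ is in hand, the rest is routine arithmetic.
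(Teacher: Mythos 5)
Your proof is correct, and it takes a different route from the one in the paper. The paper's primary argument is a generative induction: it starts from the tripod (three unbounded edges meeting at a point, $n=3$, zero bounded edges) and observes that every trivalent tree is obtained by repeatedly subdividing an edge and attaching a new unbounded edge at the new vertex, a step that adds exactly one leaf and one bounded edge, preserving the count $n-3$. That argument quietly relies on the claim that this procedure generates \emph{all} trivalent trees, which itself deserves a word of justification (it is the reverse of pruning a leaf whose neighbor has two unbounded edges, essentially Lemma \ref{twoleaftripod}). Your double count via $|E|=|V|-1$ and the handshake lemma avoids any such generation claim and is entirely self-contained: solving $n+3m=2(n+m-1)$ for the number $m$ of internal vertices gives $m=n-2$, hence $|E|=2n-3$, and subtracting the $n$ unbounded edges leaves $n-3$. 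The one point you rightly flag --- that no edge joins two leaves, so unbounded edges biject with leaves --- is exactly where $n\ge 3$ enters, and your argument for it (a leaf-to-leaf edge would be a whole component, forcing $G$ to be the two-leaf single-edge tree) is sound. The paper does gesture at your style of argument in its closing sentence (``alternately\ldots using the Euler Characteristic Theorem'') but does not carry it out; your version is the more elementary and complete of the two, while the paper's induction has the advantage of matching the degeneration-and-insertion picture used elsewhere in the construction.
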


\begin{proof}
The simplest trivalent tree has exactly three (unbounded) edges meeting at a point. The claim is true for this tree. Any trivalent tree can built from this tree by inserting a point in the middle of an existing edge and adding a new unbounded edge and leaf at the new internal vertex. This process adds an bounded edge and leaf at each step, so the relationship remains true for all trivalent trees.

Alternately, noticing that any tree is planar allows us to show this result using the Euler Characteristic Theorem.
\end{proof}

\begin{lemma}\label{twoleaftripod}
In any trivalent tree $G$ with $n \geq 3$ leaves, there is an internal vertex in $G$ where two distinct unbounded edges meet.
\end{lemma}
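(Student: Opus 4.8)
The plan is to produce such a vertex (a \emph{cherry}) by a short counting argument resting on the edge count of Lemma~\ref{n-3} together with the pigeonhole principle. The quantity I want to manufacture is an internal vertex meeting at least two unbounded edges, and the idea is to show that there are strictly more unbounded edges than internal vertices available to carry them, so two unbounded edges must share a vertex.

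First I would record that, since $n \geq 3$, the unique neighbor of any leaf is forced to be an internal vertex: were a leaf adjacent to another leaf, the whole tree would be the single-edge tree with exactly two leaves (cf.\ Theorem~\ref{leavesexist}), contradicting $n \geq 3$. Hence each of the $n$ leaves sits on an unbounded edge whose other endpoint is a degree $3$ vertex, so every unbounded edge meets exactly one internal vertex.

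Next I would count the internal vertices. By Lemma~\ref{n-3} the tree has $n-3$ bounded edges, and since each leaf lies on exactly one unbounded edge there are $n$ unbounded edges, giving $2n-3$ edges in all. Summing degrees over all vertices yields $2(2n-3)=4n-6$; the $n$ leaves contribute $n$ and each internal vertex contributes $3$, so writing $I$ for the number of internal vertices gives $n+3I=4n-6$, hence $I=n-2$. The pigeonhole step then finishes it: the $n$ unbounded edges are distributed among the $I=n-2$ internal vertices, each incident to exactly one, and since $n>n-2\geq 1$ when $n\geq 3$, some internal vertex must receive at least two distinct unbounded edges, which is precisely the claim.

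I do not anticipate a genuine obstacle; the only place demanding care is the degree bookkeeping that delivers $I=n-2$ and the verification that every unbounded edge truly meets an internal vertex (which fails only for the two-leaf tree ruled out by $n\geq 3$). As an alternative that sidesteps the vertex count, I would instead take a longest leaf-to-leaf path $P$ and inspect the internal vertex $v$ adjacent to an endpoint leaf $\ell_1$: its third neighbor must itself be a leaf, for otherwise one could extend $P$ through that neighbor to a farther leaf and obtain a strictly longer path, contradicting maximality; then $v$ meets the two unbounded edges at $\ell_1$ and at that third neighbor.
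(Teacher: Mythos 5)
Your proof is correct, but it takes a genuinely different route from the paper. The paper's argument is structural: it passes to the subgraph $B$ consisting of the bounded edges and internal vertices, observes that $B$ is again a tree, and applies Theorem~\ref{leavesexist} to $B$ (handling the case where $B$ is a single vertex separately); a leaf of $B$ is an internal vertex of $G$ with only one bounded edge, so its remaining two edges are unbounded. Your primary argument instead counts: you use Lemma~\ref{n-3} plus the handshake identity to get $I = n-2$ internal vertices, check that each of the $n$ unbounded edges meets exactly one internal vertex (which is where $n \geq 3$ enters, correctly), and apply pigeonhole. Both are valid; the paper's version is shorter, does not depend on Lemma~\ref{n-3}, and actually produces \emph{two} cherries whenever there is a bounded edge, while yours is more mechanical and makes the quantitative bookkeeping explicit. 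Your alternative longest-leaf-to-leaf-path argument is also correct and is the closest in spirit to the paper's, since the leaves of $B$ are exactly the internal vertices found at the ends of such a maximal path.
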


\begin{proof}
Consider the subgraph, $B$, formed from the bounded edges and internal vertices. The graph $B$ is still connected and hence a tree. If $B$ is a single vertex, then that vertex is the intersection of three unbounded edges in $G$. Otherwise, $B$ is a tree with edges and hence at least two leaves. The remaining two degrees for these internal vertices in $G$ must come from unbounded edges.
\end{proof}

\begin{lemma}\label{stabledegen}
Let $G$ be a graph. If $G$ is trivalent or stable, then any degeneration of $G$ is stable.
\end{lemma}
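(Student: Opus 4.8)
The plan is to reduce the statement to a direct degree count at each vertex of the degenerated graph $G'$. Recall that a graph is stable precisely when it has no vertex of degree $2$, so it suffices to verify that every vertex of $G'$ has degree different from $2$.

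First I would fix the contracted bounded edge $e = \{v_1, v_2\}$ and record how vertex degrees transform under the degeneration of Definition \ref{degeneration}. Since $e$ is \emph{bounded}, both $v_1$ and $v_2$ are internal vertices, so neither is a leaf. The degeneration deletes $e$ and merges $v_2$ into $v_1$, so I would track two kinds of vertices. Every vertex $w \notin \{v_1, v_2\}$ keeps exactly the same incident edges (with $v_2$ possibly renamed to $v_1$ in their endpoint labels), so $\deg_{G'}(w) = \deg_G(w)$. The single merged vertex, which I continue to call $v_1$, inherits all edges formerly at $v_1$ and at $v_2$ except $e$ itself, giving $\deg_{G'}(v_1) = \deg_G(v_1) + \deg_G(v_2) - 2$.

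With this computation in hand, the two hypotheses are handled in parallel. If $G$ is trivalent, then the internal vertices $v_1, v_2$ both have degree $3$, so the merged vertex has degree $3 + 3 - 2 = 4$, while every other vertex still has degree $1$ or $3$; in particular no vertex of $G'$ has degree $2$. If instead $G$ is stable, then the internal vertices $v_1, v_2$ each have degree at least $3$, so the merged vertex has degree at least $4$, and every other vertex retains its degree, which was never $2$. In both cases $G'$ has no degree-$2$ vertex, so $G'$ is stable.

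The only point requiring genuine care is the claim that the degrees of the unaffected vertices $w \notin \{v_1, v_2\}$ are truly preserved. Since multiple edges are disallowed, one must rule out the possibility that two distinct edges $\{w, v_1\}$ and $\{w, v_2\}$ collapse to a single edge under the merge, which would drop $\deg(w)$ by one and could in principle create a degree-$2$ vertex. In the trees that arise here this never happens, since such a configuration would produce the cycle $w\,v_1\,v_2\,w$, contradicting acyclicity. I would therefore flag this acyclicity (equivalently, that $e$ lies on no triangle) as the hypothesis that makes the degree-preservation step clean, and this is the step I expect to be the main obstacle to asserting the result for arbitrary graphs rather than trees.
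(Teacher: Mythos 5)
Your proof is correct and follows essentially the same route as the paper's: both arguments observe that only the merged vertex changes degree, and that its new degree is $\deg(v_1)+\deg(v_2)-2 \geq 3+3-2 = 4$. Your additional check that no two edges collapse into one at an unaffected vertex (ruled out by acyclicity) is a worthwhile refinement the paper leaves implicit, but it does not change the substance of the argument.
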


\begin{proof}
We only allow degenerations by bounded edges, so the degree of a leaf will never change. When degenerating by the bounded edge, $e = \{v,w\}$, the degree of the identified vertex, $v=w$, in the degeneration will be $\deg(v) + \deg(w) - 2 \geq 3 + 3 - 2 = 4$.
\end{proof}

\section{Polyhedral Complexes}
\textit{We are not going to be able to embed our polyhedral complexes in a single affine space. As a result, we will not be able to show that the combinatorial object built is a tropical variety. We now state a (generalized) definition of a polyhedral complex that does not need to be embedded in a single affine space.}

\begin{dfn}
Let $C$ be a topological space and $P$ a closed subset of $C$.  A \textbf{chart} for $P$ is a homeomorphism of $P$ with a closed, possibly unbounded lattice polyhedron $X$ in $\R^j$.
\end{dfn}

\begin{notation}
Let $X \subset \R^j$ be a lattice polyhedron. Let $V_X$ be the smallest affine space containing $X$, that is, $V_X = \{x + span\{x - x'\} \mid x,x' \in X\}$.
\end{notation}

\begin{dfn}
Let $X \subset \R^j$ and $Y \subset \R^k$ be lattice polyhedra. Two charts for $P$, $c_X: X \to P$ and $c_Y: Y \to P$, are \textbf{equivalent} if there is an isomorphism of affine spaces $f: V_X \to V_Y$ such that $f(X) = Y$ and $c_X = c_Y \circ f$. We say $c_X$ and $c_Y$ are \textbf{lattice-equivalent} if $f$ restricts to an (affine) isomorphism of $V_X \cap \Z^j$ with $V_Y \cap \Z^k$. Note that such an $f$ is unique if it exists because $X$ spans $V_X$.
\end{dfn}

\begin{dfn}\label{polycomplex}
A \textbf{polyhedral complex}, $C$, is a topological space, together with a collection of closed cells $\overline{P} = \{P_i|i \in I\}$ and for each $i$ a lattice-equivalence class of charts $\overline{c} = \{c_i: X_i \to P_i|i \in I\}$ for some closed lattice polyhedra $X_i$ such that:
\begin{itemize}
\item $C$ is the union of the $P_i$,
\item for each $i,j \in I$, the intersection $P_i \cap P_j$ is equal to $P_k$ for some $k \in I$,
\item if $Y$ is a face of $X_i$, then there exists $j$ such that $c_i(Y) = P_j$, and
\item if two charts have the same image, $P_i = P = P_j$, then $c_i: X_i \to P$ and $c_j: X_j \to P$, are lattice-equivalent.
\end{itemize}
The elements of $\overline{P}$ will be called the polyhedral cells of $P$.
\end{dfn}

\begin{dfn}
If $c: X \to P$ is a chart and $Y$ is a face of $X$, the we say that the polyhedral cell $c(Y)$ is a \textbf{face} of $P$.
\end{dfn}

\begin{dfn}
A \textbf{morphism} of polyhedral complexes is a continuous function $\phi: C \to C'$ such that
\begin{itemize}
\item any polyhedral cell $P_i$ of $P$ maps entirely into a polyhedral cell $P_j'$ of $P'$ with
\item $c_j'^{-1} \circ \phi \circ c_i: X_i \to X_j'$ affine and integral where defined.
\end{itemize}
\end{dfn}

\begin{dfn}
A polyhedral cell $P$ with chart $c: X \to P$ is said to be \textbf{dimension $k$} if the lattice polyhedron $X$ is dimension $k$. A polyhedral complex is said to have \textbf{dimension $k$} if $k$ is the biggest dimension of any of its polyhedral cells and any polyhedral cell is a face of a polyhedral cell of dimension $k$.
\end{dfn}

\begin{dfn}
Let $\phi: C \to C'$ be a morphism of polyhedral complexes of dimension $k$. Pick a point $p$ in the interior of a $k$-dimensional polyhedral cell, $P_i$. Then there exists a polyhedral cell $P_j'$ of $P'$ such that $c_j'^{-1} \circ \phi \circ c_i: X_i \to X_j'$ is affine and integral where defined. There is a lattice $\Lambda_i$ in $X_i$ and a lattice $\Lambda_j'$ in $X_j'$. Let $\ind(\phi)_p$ be $0$ if the dimension of $P_j'$ is not $k$ and otherwise let $\ind(\phi)_p = [c_j'^{-1} \circ \phi \circ c_i(\Lambda_i):\Lambda_j']$.
\end{dfn}

\begin{dfn}
A polyhedral complex of dimension $k$ will be said to be \textbf{weighted} if each polyhedral cell of dimension $k$ is assigned a rational number. The weight of a polyhedral cell $P_i$ will usually be denoted by $w(P_i)$.
\end{dfn}

\begin{remark}
Although the weights are associated to top-dimensional polyhedral cells, we can think of the weight as a function that is only defined on the interiors of the top-dimensional polyhedral cells.
\end{remark}

\begin{dfn}
Let $C$ and $C'$ be polyhedral complexes and $\phi: C \to C'$ a morphism of polyhedral complexes of the same dimension. Suppose $C$ is weighted; the weight of the polyhedral cell $P_i$ will be written $w(P_i)$. Pick $q \in C'$ in the interior of a top dimensional polyhedral cell. Define
$$\deg(\phi)_q = \sum_{\begin{array}{c}p \in P\\\phi(p) = q\end{array}} w(p) \cdot \ind(\phi)_p.$$
If the sum is independent of the choice of $q$, then we say that the \textbf{degree} of $\phi$ is this constant value, denoted $\deg(\phi)$.
\end{dfn}

\section{Orthogonal Basis Lemma}
\textit{The previous section shows us that computing the degree of a morphism will require us to simplify a sum. The following lemma will allow us to do that simplification later in the proof of the main theorem. This lemma is a special case of a general lemma about bilinear pairings with an orthogonal basis.}

\begin{prop}\label{states}
Let $\omega_1, \omega_2 \in Z(\R[S_d])$. Then
$$\tr(\omega_1\omega_2) = \sum_{\nu} \frac{1}{|K_\nu|}\tr(\omega_1 K_\nu)\tr(K_\nu \omega_2),$$
where the sum is taken over integer partitions $\nu$ of $d$.
\end{prop}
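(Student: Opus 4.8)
The plan is to exploit the facts already established about the trace pairing: that $\{K_\nu\}$ forms a basis of $Z(\R[S_d])$ (Lemma \ref{basis}), and that this basis is \emph{orthogonal} with respect to the bilinear form $(\omega_1,\omega_2) \mapsto \tr(\omega_1\omega_2)$. The orthogonality is exactly the content of Example \ref{conjsize}: $\tr(K_\sigma K_{\sigma'}) = 0$ when $\sigma \neq \sigma'$, and $\tr(K_\sigma K_\sigma) = |K_\sigma|$. So the strategy is to expand $\omega_1$ (or, equivalently, the product $\omega_1\omega_2$) in the basis $\{K_\nu\}$, apply the trace, and recognize the resulting coefficients in terms of $\tr(\omega_1 K_\nu)$ and $\tr(K_\nu \omega_2)$.

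First I would write $\omega_1\omega_2 = \sum_\nu a_\nu K_\nu$ for some scalars $a_\nu \in \R$, which is possible precisely because the product lands in the center $Z(\R[S_d])$ (the center is closed under multiplication). Then $\tr(\omega_1\omega_2) = \sum_\nu a_\nu \tr(K_\nu)$, and since $\tr(K_\nu) = 0$ unless $\nu$ is the partition $(1^d)$ of the identity, only one term survives on the left — so this direct route is a little awkward. A cleaner approach is to extract each coefficient $a_\nu$ by pairing against $K_\nu$: using linearity of $\tr$ (established above) and the orthogonality relations, $\tr\bigl((\omega_1\omega_2) K_\nu\bigr) = \sum_\mu a_\mu \tr(K_\mu K_\nu) = a_\nu |K_\nu|$, hence $a_\nu = \tfrac{1}{|K_\nu|}\tr(\omega_1\omega_2 K_\nu)$. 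Since the center is commutative, $\tr(\omega_1\omega_2 K_\nu) = \tr\bigl(\omega_1 (K_\nu \omega_2)\bigr)$, but this still couples the two factors.

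The decisive step is instead to expand $\omega_1$ alone: write $\omega_1 = \sum_\nu c_\nu K_\nu$ with $c_\nu = \tfrac{1}{|K_\nu|}\tr(\omega_1 K_\nu)$, the last equality obtained by the same pairing-and-orthogonality computation as above (pair $\omega_1$ against $K_\nu$). Substituting this expansion gives
$$\tr(\omega_1\omega_2) = \tr\Bigl(\sum_\nu c_\nu K_\nu \,\omega_2\Bigr) = \sum_\nu c_\nu \,\tr(K_\nu \omega_2) = \sum_\nu \frac{1}{|K_\nu|}\tr(\omega_1 K_\nu)\,\tr(K_\nu\omega_2),$$
using linearity of $\tr$ in the middle step. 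This is exactly the claimed identity. The main point to verify carefully — and the only place any real content enters — is the coefficient-extraction formula $c_\nu = \tfrac{1}{|K_\nu|}\tr(\omega_1 K_\nu)$, which rests entirely on the orthogonality relations of Example \ref{conjsize}; everything else is linearity of the trace and commutativity of the center. I do not anticipate a genuine obstacle here, since this is the standard reconstruction of a vector in terms of an orthogonal basis, specialized to the symmetric-group class algebra; the only bookkeeping subtlety is keeping the two appearances of $\omega_2$ versus $K_\nu$ in the correct order, which the commutativity of $Z(\R[S_d])$ renders harmless.
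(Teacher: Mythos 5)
Your proposal is correct and rests on exactly the same ingredients as the paper's proof --- the orthogonality relations of Example \ref{conjsize} for the basis $\{K_\nu\}$ together with linearity of $\tr$ --- differing only cosmetically in that you expand $\omega_1$ alone and argue from left to right, whereas the paper expands both $\omega_1$ and $\omega_2$ and verifies the identity starting from the right-hand side. This is essentially the same argument.
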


\begin{proof}
First, write $\omega_i = \sum_\alpha \omega_{i,\alpha} K_\alpha$. Then
$\sum_\nu \frac{1}{|K_\nu|}\tr(\omega_1K_\nu)\tr(K_\nu \omega_2)$
$$\begin{array}{lll}
= & \sum_\nu \frac{1}{|K_\nu|}(\sum_\alpha \omega_{1,\alpha} \tr(K_\alpha K_\nu))(\sum_\beta \omega_{2,\beta} \tr(K_\nu K_\beta)) & \mbox{linearity}\\
= & \sum_{\nu} \frac{1}{|K_\nu|}(\omega_{1,\nu} \tr(K_\nu K_\nu))(\omega_{2,\nu} \tr(K_\nu K_\nu)) & \mbox{orthogonality}\\
= & \sum_{\nu} \frac{1}{|K_\nu|}(\omega_{1,\nu} \omega_{2,\nu}) (|K_\nu|)^2 & \mbox{\ref{conjsize}}\\
= & \sum_{\nu} (\omega_{1,\nu} \omega_{2,\nu}) |K_\nu| & \mbox{}\\
= & \sum_{\nu,\epsilon} (\omega_{1,\nu} \omega_{2,\epsilon}) \tr(K_\nu K_\epsilon) & \mbox{orthogonality}\\
= & \tr( \sum_{\nu,\epsilon} (\omega_{1,\nu} \omega_{2,\epsilon}) K_\nu K_\epsilon) & \mbox{linearity}\\
= & \tr(\omega_1 \omega_2) & \mbox{}\\
\end{array}$$
\end{proof}

\chapter{The Construction}
\textit{In this chapter, we construct a closed (but unbounded) polyhedral complex with a morphism to the tropical moduli space of marked curves and show that the degree of this morphism captures information about the Hurwitz numbers.}

\section{Constructing the Polyhedral Complex, $\H(\overline{\sigma})$}
\textit{In this section, we specify a union of closed cones in real vector spaces and give modular meaning to points in these cones related to covers of marked tropical curves. Then we identify the points on the cones for which the modular meanings agree. Finally, we assign weights to the top-dimensional polyhedral cells.}

\begin{dfn}\label{fulllabel}
Fix $n \geq 3$ in $\N$, and $d \in \N$. Let $G$ be a topological type (\ref{topotype}) of trivalent trees with $n$ leaves and $m = n-3$ bounded edges (\ref{n-3}). Label the unbounded edges of $G$ by distinct elements of the set $\overline{\lambda} = \{\lambda_1,\ldots,\lambda_n\}$. For each edge, $\lambda_i$, pick an integer partition $\sigma_i$ of $d$; call this collection of choices $\overline{\sigma}$. Label the bounded edges of $G$ by distinct elements of the set $\overline{E} = \{e_1, \ldots, e_m\}$. For each edge, $e_i$, pick an integer partition $\nu_i$ of $d$; call this collection of choices $\overline{\nu}$. Let $G(\overline{\lambda},\overline{E})$ represent only the choice of the labelings on $G$. Let $G(\overline{\lambda},\overline{\sigma},\overline{E},\overline{\nu})$ represent this collection of choices, both the labelings and the associated integer partitions of $d$.
\end{dfn}

\begin{note}
Under the hypotheses in \ref{fulllabel}, each unbounded edge of $G$ contains a single leaf. As a result, we can also think of $\overline{\lambda}$ as a labeling of the elements of $L(G)$, the leaves of $G$.
\end{note}

The next definition will show how to interpret $G(\overline{\lambda},\overline{E})$ as a function from $\R^m_{\geq0}$ (the closure of the positive orthant in $\R^m$) to the set of metric graphs (with labeled leaves). In other words, this function gives modular meaning to the points in $\R^m_{\geq0}$.

\begin{dfn}
For any labelled topological type of trivalent trees, $G(\overline{\lambda},\overline{E})$, and point $p = (p_1,\ldots,p_m) \in \R^m_{\geq0}$, construct the following metric graph.
\begin{itemize}
\item If there are any $p_i = 0$, then degenerate $G$ by $e_i$ in the sense of \ref{degeneration}.
\item For any $p_i > 0$, assign length $p_i$ to edge $e_i$.
\end{itemize}
Notice that $\overline{\lambda}$ descends to a labeling of the unbounded edges (and leaves) of this new graph; notice also that the elements of $\overline{E}$ that were not degenerated also descend to a labeling of the bounded edges of this new graph. Call this labelled metric graph $G(\overline{\lambda},\overline{E})(p)$.
\end{dfn}

\begin{remark}
Notice that degeneration will never change the genus of a tropical curve.
\end{remark}

\begin{notation}
We will sometimes suppress the labelings of the edges in the notation $G(\overline{\lambda},\overline{\sigma},\overline{E},\overline{\nu})$ because they are universal, writing instead $G(\overline{\sigma},\overline{\nu})$.
\end{notation}

\begin{dfn}\label{acceptable}
Consider a set of choices $G(\overline{\sigma},\overline{\nu})$ on a trivalent tree $G$. Let $v$ be an internal (degree $3$) vertex in $G$. Thus $v$ is the endpoint of three edges, $\{\epsilon_1,\epsilon_2,\epsilon_3\}$. Each of these edges has an associated integer partition of $d$ as either a bounded or unbounded edge; call these the three integer partitions $\{\mu_1,\mu_2,\mu_3\}$. Recall from \ref{basisdfn} that each integer partition $\mu_i$ of $d$ corresponds to a basis element $K_{\mu_i}$ in the class algebra. Define
$$I(v) = \tr(K_{\mu_1}K_{\mu_2}K_{\mu_3}).$$
We will say that the vertex $v$ is \textbf{acceptable} if $I(v) \neq 0$. We say that $G(\overline{\sigma},\overline{\nu})$ is \textbf{acceptable} if every internal vertex $v$ in $G$ is acceptable.
\end{dfn}

\begin{remark}
Recall that each vertex in a tropical curve corresponds to an irreducible component in a classical marked stable curve. The acceptable condition corresponds to whether this data can be realized as a classical cover of $\p^1$ with only three ramification points.
\end{remark}

\begin{dfn}\label{Dsig}
Recall that we have fixed $n,d \in \N$, and let $m = n-3$. Pick a collection of $n$ integer partitions of $d$: $\overline{\sigma} = \{\sigma_1,\ldots,\sigma_n\}$. Define $D(\overline{\sigma})$ as the disjoint union of copies of $\R^m_{\geq0}$ indexed by the possible choices for acceptable $G(\overline{\sigma},\overline{\nu})$:
$$D(\overline{\sigma}) = \coprod_{\begin{array}{c}G(\overline{\sigma},\overline{\nu})\\ \mbox{acceptable}\end{array}} \R^m_{\geq0}.$$
\end{dfn}

\begin{remark}
The data for each cone has much in common; only the topological type and $\overline{\nu}$ are allowed to vary.
\end{remark}

\begin{dfn}
Pick a collection of $n$ integer partitions of $d$: $\overline{\sigma} = \{\sigma_1,\ldots,\sigma_n\}$. Let $\H(\overline{\sigma})$ be the topological space formed from $D(\overline{\sigma})$ by identifying points $p \in D(\overline{\sigma})_{G(\overline{\sigma},\overline{\nu})}$ and $q \in D(\overline{\sigma})_{G'(\overline{\sigma},\overline{\nu}')}$ exactly when there is an isomorphism (\ref{graphisodfn}) of metric graphs $F: G(\overline{\lambda},\overline{E})(p) \to G'(\overline{\lambda},\overline{E})(q)$ such that
\begin{itemize}
\item $F_E(\lambda_i) = \lambda_i$, and
\item if $F_E(e_i) = e_j$, then $\nu_i = \nu_j'$. 
\end{itemize}
Note that the first property says that the images of the leaves are specified, which is sufficient to specify the image of all vertices and edges in a tree. As a result, if such an isomorphism exists, it is unique. In addition, because $\overline{\sigma}$ is constant in $D(\overline{\sigma})$, the first property implies that the integer partition, $\sigma_i$, associated with each unbounded edge also matches when $p$ and $q$ are glued.
\end{dfn}

The space $\H(\overline{\sigma})$ is our candidate for the tropical analogue for the Hurwitz space.

\begin{thm}
The space $\H(\overline{\sigma})$ is a polyhedral complex in the sense of definition \ref{polycomplex}.
\end{thm}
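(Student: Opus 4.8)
The plan is to exhibit explicitly the cells and charts demanded by Definition \ref{polycomplex} and then check the four axioms. The top-dimensional cells will be the images in $\H(\overline{\sigma})$ of the cones $\R^m_{\geq0}$ indexed by the acceptable $G(\overline{\sigma},\overline{\nu})$, and the remaining cells will be the images of the faces of these cones. A face of $\R^m_{\geq0}$ is a coordinate subspace $\{p_i = 0 : i \in S\}$, and by the construction of $G(\overline{\lambda},\overline{E})(p)$ such a face parametrizes metric graphs in which the edges $e_i$ ($i \in S$) have been degenerated in the sense of \ref{degeneration}. For each face $Y$ of each cone $X_i = \R^m_{\geq0}$, the chart is the composite $Y \hookrightarrow \R^m_{\geq0} \to D(\overline{\sigma}) \to \H(\overline{\sigma})$, and $Y$ carries its standard lattice $\Z^m \cap V_Y$. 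The index set $I$ collects one such chart for each face of each cone.

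First I would verify that each chart is a homeomorphism onto its image, so that it is a legitimate chart. The only way two points $p,q$ of a single cone can be identified is through an isomorphism $F$ of the associated labeled metric graphs with $F_E(\lambda_i) = \lambda_i$. By Proposition \ref{graphisomorph}, together with the fact that a tree isomorphism fixing every leaf is unique and hence the identity, $F$ must be the identity on the underlying topological type; consequently it preserves every edge length, forcing $p = q$. Thus the quotient map is injective on each cone (and on each face), so the charts are genuine homeomorphisms onto their images and the images are honest lattice polyhedra.

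Axioms $1$ and $3$ of \ref{polycomplex} are then built into the construction: the cones cover $D(\overline{\sigma})$, so their images cover $\H(\overline{\sigma})$, and every face of every $X_i$ has been declared a cell. The substance lies in Axioms $2$ and $4$. For Axiom $4$, if two charts $c_i, c_j$ have the same image $P$, the transition map $c_j^{-1}\circ c_i$ is induced by an isomorphism of the corresponding labeled, partition-decorated metric graphs; such an isomorphism merely relabels the bounded edges subject to $\nu_i = \nu_j'$, so in coordinates it is a permutation of the edge-length coordinates. This is an integral affine isomorphism carrying $\Z^m$ onto $\Z^m$, which is exactly lattice-equivalence. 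For Axiom $2$, I would argue that $P_i \cap P_j$ is the image of the unique maximal common degeneration of the two combinatorial types: a point lies in both cells precisely when its metric graph arises as a degeneration of each, and by \ref{stabledegen} these degenerations are stable trees whose leaf-labeled, partition-decorated types must be isomorphic. The rigidity supplied by \ref{graphisomorph}, reinforced by the distinctness of the edge partitions in \ref{distinctpartitions}, ensures this matching is unique, so the common locus is a single face cell $P_k$.

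The main obstacle I expect is Axiom $2$: I must rule out the possibility that two cones are glued along \emph{several} incomparable faces, which would make $P_i \cap P_j$ a union of cells rather than a single one. The argument hinges on showing there is a unique maximal combinatorial type to which both $G$ and $G'$ degenerate compatibly with the leaf labels and the partitions $\overline{\nu},\overline{\nu}'$; once a unique maximal common face is established, all lower common faces are its subfaces and the intersection collapses to that one cell. I would establish this uniqueness by noting that a common degeneration is determined by the (isomorphism class of the) contracted type, and that the leaf-fixing isomorphism realizing the identification is forced to be the identity on leaves and hence unique, so two distinct maximal common faces cannot coexist.
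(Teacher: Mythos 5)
Your overall strategy is the same as the paper's: take the cells to be the images of the faces of the cones $\R^m_{\geq 0}$, prove injectivity of the quotient map on each cone so the charts are homeomorphisms onto their images, observe that Axioms 1 and 3 of \ref{polycomplex} are built into the construction, handle lattice-equivalence by noting the transition maps are coordinate permutations, and reduce Axiom 2 to producing a unique maximal common degeneration. You also correctly isolate the one genuinely delicate point, namely that $P_i \cap P_j$ might a priori be a union of incomparable faces rather than a single cell.

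However, the justification you offer for that point does not close the gap. You write that "the leaf-fixing isomorphism realizing the identification is forced to be the identity on leaves and hence unique, so two distinct maximal common faces cannot coexist." Uniqueness of each individual gluing isomorphism says nothing about the poset of common faces: it is perfectly consistent with that statement that two faces of the same cone, contracting incomparable edge sets $S_1$ and $S_2$, are each glued to the other cone by their own (unique) isomorphisms. What you actually need is that the condition "$p$ is identified with a point of the other cone" is \emph{edge-local}: by \ref{graphisomorph} and \ref{distinctpartitions}, and because contracting an edge of a tree does not change the leaf-partitions $\pi(e')$ of the surviving edges $e'$, the decorated metric graph $G(\overline{\lambda},\overline{E})(p)$ is realizable in the cone of $G'(\overline{\sigma},\overline{\nu}')$ if and only if every bounded edge $e$ of $G$ with $p_e > 0$ has a partner $e'$ in $G'$ with $\pi(e') = \pi(e)$ \emph{and} with matching integer partition $\nu$. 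Hence the identified locus inside the cone is exactly the vanishing locus of the coordinates indexed by a fixed set of "bad" edges --- a single face --- which is precisely how the paper proceeds, constructing $D_1$ (edges whose leaf-partition has no match) and $D_2$ (edges whose partition matches but whose decoration $\nu$ does not) and contracting $D_1 \cup D_2$. Supplying this edge-by-edge criterion turns your sketch into the paper's proof; without it, the existence of a unique maximal common face is asserted rather than established.
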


\begin{proof}
Each cone $D(\overline{\sigma})_{G(\overline{\sigma},\overline{\nu})}$ is a copy of $\R^m_{\geq0}$, which is the closure of the positive orthant in $\R^m$ with the standard Euclidean topology. The space $D(\overline{\sigma})$ is a disjoint union of these topological spaces, which gives it a natural topology. The space $\H(\overline{\sigma})$ is formed by identifying points in $D(\overline{\sigma})$, which gives it a natural topology.

First we show that each copy of $\R^m_{\geq0}$ is homeomorphic to its image in $\H(\overline{\sigma})$. Let $p,q \in D(\overline{\sigma})_{G(\overline{\sigma},\overline{\nu})}$. If $p$ and $q$ are identified in $\H(\overline{\sigma})$, then there is an isomorphism of metric graphs $F: G(\overline{\lambda},\overline{E})(p) \to G(\overline{\lambda},\overline{E})(q)$ that respects the labelings. But because they come from the same copy of $D(\overline{\sigma})$, the edges that are degenerated must be degenerated for both $p$ and $q$. Thus the coordinates with value $0$ for $p$ and $q$ must agree. Since $F$ is an isomorphism of metric graphs, all of the non-degenerated edges must be the same length, so the positive coordinates of $p$ and $q$ must also be identical, meaning that $p=q$. In other words, each the projection $c_{D(\overline{\sigma})_{G(\overline{\sigma},\overline{\nu})}}: D(\overline{\sigma})_{G(\overline{\sigma},\overline{\nu})} \to \H(\overline{\sigma})$ is an injection. As a result, $c_{D(\overline{\sigma})_{G(\overline{\sigma},\overline{\nu})}}$ is a homeomorphism of a closed lattice polyhedron in $\R^m$ with its image in $\H(\overline{\sigma})$. 

Let $Y$ be a face of $D(\overline{\sigma})_{G(\overline{\sigma},\overline{\nu})}$; the preceding paragraph also shows that $c_{D(\overline{\sigma})_{G(\overline{\sigma},\overline{\nu})}}|_Y$ is a homeomorphism of the face $Y$ with its image. Given such a face, $Y$, let $c_{\hat{Y}}$ be the associated function and $\hat{Y}$ the image of that function.

Consider the sets
$$\overline{P} = \bigcup_{G,\overline{\nu}} \{\hat{Y} \mid Y \mbox{ is a face of } D(\overline{\sigma})_{G(\overline{\sigma},\overline{\nu})}\}$$
and
$$\overline{c} = \bigcup_{G,\overline{\nu}} \{c_{\hat{Y}} \mid Y \mbox{ is a face of } D(\overline{\sigma})_{G(\overline{\sigma},\overline{\nu})}\}.$$
We will now show that $\overline{P}$ and $\overline{c}$ give $\H(\overline{\sigma})$ the structure of a polehedral complex.

Notice that $\H(\overline{\sigma})$ is, by definition, the union of the images of the unrestricted charts, so it is certainly the union of these polyhedral cells.

Now we must show that the intersection of two polyhedral cells is a polyhedral cell. Consider two polyhedral cells $\hat{Y}, \hat{Y}' \in \overline{P}$. Then there is a trivalent graph $G$ and collection of integer partitions $\overline{\nu}$ such that $\hat{Y}$ is the image of a face $Y$ of $D(\overline{\sigma})_{G(\overline{\sigma},\overline{\nu})}$ and there is a trivalent graph $G'$ and collection of integer partitions $\overline{\nu}'$ such that $\hat{Y}'$ is the image of a face $Y'$ of $D(\overline{\sigma})_{G'(\overline{\sigma},\overline{\nu}')}$. 

Notice that the set of leaves in $G$ and $G'$ have the same labels. Let $D_1$ be the set of bounded edges, $e$, of $G$ such that there does not exist an edge $e'$ of $G'$ with $\pi(e') = \pi(e)$ (\ref{partitionpi}). Let $D_1'$ be the set of bounded edges, $e'$, of $G'$ such that there does not exist an edge $e$ of $G$ with $\pi(e') = \pi(e)$.

Let $H_1$ be the degeneration of $G$ by the edges of $D_1$; let $H_1'$ be the degeneration of $G'$ by the edges of $D_1'$. Because $G$ and $G'$ were trivalent trees, $H_1$ and $H_1'$ are stable trees (\ref{stabledegen}). As above, $\overline{\lambda}$ gives a bijection of $L(H_1)$ to $L(H_1')$; moreover, by construction, the set of (set) partitions created by bounded edges in each of $H_1$ and $H_1'$ are identical. So by \ref{graphisomorph}, there is an isomorphism $F: H_1 \to H_1'$ that agrees with the labeling on the vertices.

Let $D_2$ be the set of edges $e_i$ of $H_1$ such that $F_E(e_i) = e_j'$ but $\nu_i \neq \nu_j$. Let $D_2'$ be the set of edges $e_j'$ of $H_1'$ such that $F_E(e_i) = e_j'$ but $\nu_i \neq \nu_j$. Let $H_2$ be the degeneration of $H_1$ by the edges of $D_2$, and let $H_2'$ be the degeneration of $H_1'$ by the edges of $D_2'$. Clearly, $F$ descends to an isomorphism $F: H_2 \to H_2'$, and now $F$ respects all labelings and integer partitions.

Consider the face of $D(\overline{\sigma})_{G(\overline{\sigma},\overline{\nu})}$ determined by setting the coordinates associated with $D_1 \cup D_2$ to zero and intersecting with $Y$; call this face $Q$. Similarly, consider the face of $D(\overline{\sigma})_{G'(\overline{\sigma},\overline{\nu}')}$ determined by setting the coordinates associated with $D_1' \cup D_2'$ to zero and intersecting with $Y'$; call this face $Q'$. Pick a point $p \in Q$. Consider the following point $q$. If $p_i > 0$ and $F_E(e_i) = e_j$, then set $q_j = p_i$. Otherwise, let $q_j = 0$. By the above discussion, $G(\overline{\lambda},\overline{E})(p)$ is isomorphic to $G(\overline{\lambda},\overline{E})(q)$ by $F$. So the image of $p$ is in the intersection, $\hat{Y} \cap \hat{Y}'$. Similarly, the image of every point from $\hat{Q}'$ is in the intersection. So $\hat{Q} = \hat{Q}'$ is contained in $\hat{Y} \cap \hat{Y}'$.

Let $\hat{p} \in \hat{Y} \cap \hat{Y}'$; then $p$ represents a metric graph in $Y$ and $Y'$. The partitions of the leaves that can be realized in each version of $p$ must be a subset of those possible for each of $G$ and $G'$, so certainly $p$ lies in the set of points for which the edges of $D_1$ have been degenerated. Similarly, the two versions of $p$ must have identical integer partitions on the bounded edges, so $p$ also lives in the face on which the edges of $D_2$ have been degenerated. Hence $p$ lies in $Y$, meaning $\hat{p} \in \hat{Q}$. Thus $\hat{Q} = \hat{Q}'$ is the intersection, which is clearly a polyhedral cell.

Finally, we must show that the different charts for the faces are lattice equivalent. Suppose $\hat{Y} = \hat{Y'}$, where $Y$ is a face of $D(\overline{\sigma})_{G(\overline{\sigma},\overline{\nu})}$ and $Y'$ is a face of $D(\overline{\sigma})_{G'(\overline{\sigma},\overline{\nu}')}$. Then for any point, $p \in Y$, there is a point $p' \in \hat{Y}'$ with which it is identified. The coordinates of $p$ and $p'$ corresponding to the sets $D_1 \cup D_2$ constructed above are all zero. The other coordinates are permuted by the isomorphism, and this permutation is the same for all points. Thus the affine spaces containing $Y$ and $Y'$ are identified globally by a permutation of the coordinates. This isomorphism clearly respects the lattices, so these charts are lattice-equivalent.

Having checked all conditions in \ref{polycomplex}, we see that $\H(\overline{\sigma})$ is a polyhedral complex.
\end{proof}

\begin{thm}
The space $\H(\overline{\sigma})$ is connected.
\end{thm}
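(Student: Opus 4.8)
The plan is to reduce connectedness to the elementary fact that a union of connected subspaces sharing a common point is connected. Each cone $D(\overline{\sigma})_{G(\overline{\sigma},\overline{\nu})}$ is a copy of $\R^m_{\geq 0}$, which is convex and hence connected, and the quotient map onto $\H(\overline{\sigma})$ is continuous, so the image of each cone in $\H(\overline{\sigma})$ is connected. Since $\H(\overline{\sigma})$ is by definition the union of these images, it suffices to produce a single point lying in every one of them. First I would exhibit that point as the common image of the origins of all the cones.

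The candidate common point is the image of $0 \in \R^m_{\geq 0}$. Under the modular interpretation, $G(\overline{\lambda},\overline{E})(0)$ degenerates \emph{every} bounded edge in the sense of \ref{degeneration}; contracting all $m = n-3$ bounded edges of a trivalent tree with $n$ labeled leaves yields the unique star graph with one internal vertex joined to the $n$ unbounded edges labeled $\lambda_1,\ldots,\lambda_n$. Crucially, this star depends on neither the topological type $G$ nor the choice $\overline{\nu}$. I would then check that the two origins coming from any acceptable $G(\overline{\sigma},\overline{\nu})$ and $G'(\overline{\sigma},\overline{\nu}')$ are identified by the gluing relation: both degenerate graphs equal this common star, the identity-on-labels isomorphism $F$ satisfies $F_E(\lambda_i) = \lambda_i$, and the condition ``$F_E(e_i) = e_j \Rightarrow \nu_i = \nu_j'$'' on bounded edges is vacuous because no bounded edge survives the total degeneration. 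Hence all cone-origins are glued to a single point $\star \in \H(\overline{\sigma})$.

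With $\star$ in hand, the conclusion is immediate: $\H(\overline{\sigma})$ is the union of the connected images of the cones, each of which contains $\star$, so $\H(\overline{\sigma})$ is connected. The only genuine caveat, rather than a hard obstacle, is nonemptiness: a cone contributes to $D(\overline{\sigma})$ only when $G(\overline{\sigma},\overline{\nu})$ is acceptable in the sense of \ref{acceptable}, so if no acceptable labeling exists then $\H(\overline{\sigma})$ is empty and connectedness is vacuous; otherwise at least one acceptable cone exists and the argument above applies verbatim.

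The step I expect to require the most care is the verification that the gluing conditions are genuinely satisfied at the origins. The substance is entirely bookkeeping: confirming via \ref{degeneration} that simultaneously contracting all bounded edges of any trivalent $n$-leaf tree produces the \emph{same} labeled star (independently of the order of contractions, as noted in the remark following \ref{degeneration}), and that the surviving-edge compatibility condition on $\overline{\nu}$ is vacuously met. No balancing, weighting, or degree data enters the argument, so once this identification is established the connectedness follows with no further analysis.
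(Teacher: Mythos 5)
Your proposal is correct and follows essentially the same route as the paper: both identify the images of all cone-origins as a single common point (the star with $n$ labeled unbounded edges, where the $\overline{\nu}$-compatibility condition is vacuous) and conclude connectedness from the fact that the connected images of the cones all share that point. Your added remark about the vacuous case when no acceptable labeling exists is a reasonable extra precaution the paper omits, but the substance of the argument is the same.
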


\begin{proof}
The graphs $G(\overline{\sigma},\overline{\nu})(0,\ldots,0)$ and $G'(\overline{\sigma},\overline{\nu}')(0,\ldots,0)$ are both trees with a degree $n$ vertex at the end of $n$ unbounded edges. Clearly, there exists an isomorphism of these trees respecting $\overline{\lambda}$ (and thus $\overline{\sigma}$), and there are no bounded edges left to consider, so that isomorphism respects $\overline{E}$ and $\overline{\nu}$. So these two points are identified in $\H(\overline{\sigma})$. The point $(0,\ldots,0)$ is in every face of these cones as well. So, the image point is in the image of every chart (meaning every polyhedral cell). In the Euclidean topology, that is enough for us to know that the space is connected.
\end{proof}

\begin{remark}
Although $\H(\overline{\sigma})$ is not embedded, the previous theorem shows that it has a fan-like structure.
\end{remark}

\begin{dfn}\label{weightsdfn}
Consider a top-dimensional polyhedron $P_i \in \overline{P}$, meaning that it is the image of the full copy of $\R^m_{\geq0}$ from $D(\overline{\sigma})_{G(\overline{\lambda},\overline{\sigma},\overline{E},\overline{\nu})}$, where $G = (V,E)$ is a trivalent tree. Define the weight of $P_i$ as
$$w(P_i) = \frac{1}{d!} \left(\prod_{e_i \in \overline{E}} \frac{1}{|K_{\nu_i}|}\right) \left(\prod_{v \in (V \setminus L(G))} I(v)\right).$$
Notice that the condition $v \in V \setminus L(G)$ is the same as saying that $v$ is an internal vertex.
\end{dfn}

\section{The Morphism to $\M_{0,n}$}
\textit{In this section, we define a morphism from the polyhedral complex constructed above to Mikhalkin's moduli space of marked tropical rational curves and show that the degree of that morphism captures information about the Hurwitz numbers.}

Many of the ideas in this section are derived from \cite{MikM}, including the statements of \ref{doubleratio} and \ref{ratiocoord}. More detail is provided here because it is absent from the original.

\begin{dfn}\label{doubleratio}\cite{MikM}
Consider one of the polyhedra from \ref{Dsig}, $D(\overline{\sigma})_{G(\overline{\lambda},\overline{\sigma},\overline{E},\overline{\nu})}$. Fix $4$ distinct elements, $w,x,y,z \in \overline{\lambda}$, and think of these as labels of the leaves. Define the function $d_{(w,x),(y,z)}: D(\overline{\sigma})_{G(\overline{\lambda},\overline{\sigma},\overline{E},\overline{\nu})} \to \R$ as follows. First notice that, because $G$ is a tree, there is a unique (oriented) path, $P_{(w,x)}$, in $G$ from $w$ to $x$. Similarly, there is a unique (oriented) path, $P_{(y,z)}$, in $G$ from $y$ to $z$. Call this intersection $P_{(w,x),(y,z)}$. Notice that $P_{(w,x),(y,z)}$ is connected; if the paths separate and rejoin, that would give a circuit. Notice that no unbounded edge can be used in more than one of these paths because the leaves are distinct (and paths don't repeat edges); so, $P_{(w,x),(y,z)}$ is contained in the bounded edges of $G$. Let $p \in D(\overline{\sigma})_{G(\overline{\lambda},\overline{\sigma},\overline{E},\overline{\nu})}$; then $G(\overline{\lambda},\overline{E})(p)$ contains a copy of the intersection, $P_{(w,x),(y,z)}(p)$, but here each edge has a length. If the orientations of $P_{(w,x)}$ and $P_{(y,z)}$ agree on $P_{(w,x),(y,z)}(p)$,  then let $d_{(w,x),(y,z)}(p)$ be the length of $P_{(w,x),(y,z)}(p)$. If the orientations disagree, let $d_{(w,x),(y,z)}(p)$ be the negative of the length of $P_{(w,x),(y,z)}(p)$. We call $d_{(w,x),(y,z)}$ the \textbf{double ratio} associated with these distinct pairs of ordered points.
\end{dfn}

\begin{lemma}
On each face $Y$ of $D(\overline{\sigma})_{G(\overline{\sigma},\overline{\nu})}$, $d_{(w,x),(y,z)} \circ c_{\hat{Y}}$ is linear.
\end{lemma}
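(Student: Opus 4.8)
The plan is to reduce the statement to the observation that the double ratio is, already on the entire cone $D(\overline{\sigma})_{G(\overline{\sigma},\overline{\nu})}$, a fixed sign times a sum of coordinate functions; linearity on every face is then immediate. First I would isolate the combinatorial content of \ref{doubleratio}. The oriented paths $P_{(w,x)}$ and $P_{(y,z)}$ are computed in the fixed tree $G$, so their intersection $P_{(w,x),(y,z)}$ is a fixed set of edges, independent of the point $p$. As noted in \ref{doubleratio}, this intersection is connected and uses no unbounded edge, so it consists of some of the bounded edges $e_1,\ldots,e_m$. Let $S \subseteq \{1,\ldots,m\}$ be the set of indices of the bounded edges lying on $P_{(w,x),(y,z)}$.

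Next comes the one point that needs genuine care: the sign in the definition is constant along the whole intersection. Because $P_{(w,x),(y,z)}(p)$ is a single connected path, the relative orientation of its two traversals (once as part of $P_{(w,x)}$, once as part of $P_{(y,z)}$) cannot switch partway along: a switch would force the two paths to separate and later rejoin, producing a circuit and contradicting that $G$ is a tree. Hence there is a single $\epsilon \in \{+1,-1\}$, determined by $G$ and the labels $w,x,y,z$, such that for every $p = (p_1,\ldots,p_m) \in D(\overline{\sigma})_{G(\overline{\sigma},\overline{\nu})}$ the signed length of $P_{(w,x),(y,z)}(p)$ is the signed sum of its edge lengths:
$$d_{(w,x),(y,z)}(p) = \epsilon \sum_{i \in S} p_i.$$
This is a homogeneous linear function of the coordinates on the whole cone.

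Finally I would transfer this to a face $Y$. A face of $D(\overline{\sigma})_{G(\overline{\sigma},\overline{\nu})} = \R^m_{\geq 0}$ is cut out by setting the coordinates in some index set to zero, and (as constructed in the proof that $\H(\overline{\sigma})$ is a polyhedral complex) the chart $c_{\hat{Y}}$ is just the restriction of the injective quotient map $c_{D(\overline{\sigma})_{G(\overline{\sigma},\overline{\nu})}}$ to $Y$. Under this identification the coordinates parametrizing $Y$ are a subset of the $p_i$, and any degenerated coordinate contributes $0$ to the sum above; thus $d_{(w,x),(y,z)} \circ c_{\hat{Y}}$ is the restriction of the linear map $\epsilon \sum_{i \in S} p_i$ to the coordinate subspace $V_Y$, which is again linear.

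The main obstacle is the sign-consistency step: one must argue rigorously that the two orientations cannot reverse their relative agreement in the interior of the intersection path. Once that is established from connectedness of $P_{(w,x),(y,z)}$ together with the tree property of $G$, the remaining steps—writing the length as a coordinate sum and restricting through the chart—are routine.
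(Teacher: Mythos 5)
Your proof is correct and takes essentially the same approach as the paper: both rest on the observation that the intersection path and the orientation sign are determined combinatorially by $G$ and the four labels, independent of $p$, so the double ratio is a fixed signed sum of coordinate functions. The paper phrases this as a direct check of homogeneity and additivity rather than writing the closed form $\epsilon\sum_{i\in S}p_i$, but the content is identical, and your explicit treatment of the sign-consistency point is a reasonable expansion of the paper's one-line remark that ``the choice of a double ratio determines all orientation issues.''
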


\begin{proof}
Pick $p,q \in D(\overline{\sigma})_{G(\overline{\sigma},\overline{\nu})}$ and $r \in \R^+$. Notice that the choice of a double ratio determines all orientation issues, so we do not need to worry about the signs when working with a single function. The length of $P_{(w,x),(y,z)}(rp)$ is computed from $P_{(w,x),(y,z)}(p)$ by first scaling the length of each included edge by $r$ and then summing, which gives the same result as computing the length of $P_{(w,x),(y,z)}(p)$ and then scaling the total length by $r$. Similarly, the length of $P_{(w,x),(y,z)}(p+q)$ is computed by first adding the edge lengths for $p$ and $q$ and then computing the intersection length, which gives the same result as computing the lengths of $P_{(w,x),(y,z)}(p)$ and $P_{(w,x),(y,z)}(q)$ and then adding them.
\end{proof}

\begin{lemma}\label{ratiocoords}\cite{MikM}
Let $p \in D(\overline{\sigma})_{G(\overline{\sigma},\overline{\nu})}$. Then, for each $i$, there exists a choice of $w,x,y,z \in \overline{\lambda}$ such that $|d_{(w,x),(y,z)}(p)| = p_i$. In other words, each edge length is a double ratio.
\end{lemma}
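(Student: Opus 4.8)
The plan is to prove the slightly stronger statement that, for each bounded edge $e_i$, one can choose four distinct leaves $w,x,y,z$ depending only on $i$ and the combinatorial type $G$ (not on $p$) so that the intersection path $P_{(w,x),(y,z)}$ equals the single edge $e_i$. Once this is arranged, the metric copy $P_{(w,x),(y,z)}(p)$ is just $e_i$ carrying length $p_i$ (or the contracted point of length $0$ when $p_i = 0$), so $|d_{(w,x),(y,z)}(p)| = p_i$ irrespective of the orientation convention in \ref{doubleratio}.

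To build these leaves I would write $e_i = \{v_1,v_2\}$ and exploit trivalence: $v_1$ meets two further edges $a_1,a_2$ and $v_2$ meets two further edges $b_1,b_2$, none equal to $e_i$. Deleting $e_i$ separates $G$ into the component of $v_1$ and the component of $v_2$; deleting $v_1$ then splits its component into the branch through $a_1$ and the branch through $a_2$, and likewise $b_1,b_2$ split the other component at $v_2$. Each of these four branches is a nonempty subtree and hence contains a leaf of $G$ (a single-vertex branch is itself a leaf, and a branch carrying an edge has a leaf by \ref{leavesexist}). I would then take $w$ in the $a_1$-branch, $y$ in the $a_2$-branch, $x$ in the $b_1$-branch, and $z$ in the $b_2$-branch; since the four branches are pairwise disjoint these leaves are automatically distinct.

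The main step, and the one requiring care, is verifying that $P_{(w,x),(y,z)}$ is exactly $\{e_i\}$. The path $P_{(w,x)}$ travels from $w$ up to $v_1$ inside the $a_1$-branch, crosses $e_i$, and descends to $x$ inside the $b_1$-branch, while $P_{(y,z)}$ travels from $y$ to $v_1$ inside the $a_2$-branch, crosses $e_i$, and descends to $z$ inside the $b_2$-branch. On the $v_1$ side their edges lie in the disjoint $a_1$- and $a_2$-branches, and on the $v_2$ side in the disjoint $b_1$- and $b_2$-branches, so the only edge, and indeed the only common vertices beyond $v_1$ and $v_2$, shared by both paths is $e_i$. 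The delicate point is precisely this disjointness of the four branches: it is what forces the intersection down to a single edge rather than a longer path, and it is guaranteed by trivalence of $G$ together with the fact that a bounded edge genuinely disconnects the tree. With the intersection pinned to $e_i$, the length computation gives $|d_{(w,x),(y,z)}(p)| = p_i$ and the lemma follows.
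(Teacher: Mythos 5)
Your argument is correct and follows essentially the same route as the paper's proof: both select the four leaves from the four branches hanging off the two endpoints of $e_i$ via the other edges guaranteed by trivalence, and both conclude that the two paths intersect exactly in $e_i$. Your write-up is in fact slightly cleaner, since it avoids the paper's notational reuse of $w$ for both an endpoint of $e_i$ and one of the chosen leaves, and it makes explicit the disjointness of the four branches that the paper leaves as ``clearly.''
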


\begin{proof}
Fix $1 \leq i \leq n$ and consider $e_i = \{v,w\}$. If you are familiar with the classical or tropical moduli spaces of marked stable curves, you could find this as the pull-back of the only double ratio on $\M_{0,4}$ by the forgetful morphism $\M_{0,n} \to \M_{0,4}$ defined by those $4$ marked points. But we can locate this double ratio directly as follows.

We will construct two paths whose intersection is $e_i$. Because $e_i$ is bounded and $G$ is trivalent, there are two other edges coming out of each of $v$ and $w$: $\{v,a\}, \{v,b\}, \{w,c\}, \{w,d\}$. Remove $\{v,a\}$ from $G$ and consider $\pi(\{v,a\}) = \{S(\{v,a\}),S'(\{v,a\})\}$. Let $w$ be any leaf coming from the part of the partition associated with the component that contains $a$. Similarly, pick $x,y,z$, being careful to use either $c$ or $d$ to produce $x$. Then the intersection of these paths is clearly $e_i$, so $d_{(w,x),(y,z)}(p) = \pm p_i$. The same argument holds for degenerations of $G$ by simply picking $2$ pairs of adjacent vertices to play the roles of $a,b,c,d$. In the degeneration, there may be multiple ways to realize the edge's length as a double ratio.
\end{proof}

\begin{remark}
The proof above shows that $e_i$ is part of $P_{(w,x),(y,z)}$ if $\pi(e_i)$ separates $w$ from $x$ and $y $ from $z$. We will say that $d_{(w,x),(y,z)}$ is \textbf{$e_i$-compatible} in this case.
\end{remark}

\begin{lemma}\label{ratiocoord}
The length of $e_i$ for a point $p$ is the minimal, non-zero (absolute) value of the $e_i$-compatible double ratios evaluated at $p$.
\end{lemma}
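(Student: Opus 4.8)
The plan is to pin down $p_i$, the length of $e_i$ at the point $p$, by a two-sided estimate among the $e_i$-compatible double ratios: I would show that every such ratio has absolute value at least $p_i$, and that this lower bound is attained, so that the minimal (non-zero) absolute value is exactly $p_i$.

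I would first recall, from the remark following Lemma~\ref{ratiocoords}, that $d_{(w,x),(y,z)}$ is $e_i$-compatible precisely when $e_i$ lies on the intersection path $P_{(w,x),(y,z)}$. For the lower bound, note that by Definition~\ref{doubleratio} the value $|d_{(w,x),(y,z)}(p)|$ is the length of $P_{(w,x),(y,z)}(p)$, that is, the sum of the non-negative lengths of the bounded edges traversed by that path. If the ratio is $e_i$-compatible then $e_i$ is one of those edges, so $|d_{(w,x),(y,z)}(p)| \geq p_i$. Hence no $e_i$-compatible double ratio at $p$ has absolute value smaller than $p_i$.

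To see that the bound is sharp, I would appeal to the construction carried out in the proof of Lemma~\ref{ratiocoords}: one can choose leaves $w,x,y,z$ so that the paths $P_{(w,x)}$ and $P_{(y,z)}$ meet in the single edge $e_i$. For that choice $P_{(w,x),(y,z)} = \{e_i\}$, the ratio is $e_i$-compatible, and $|d_{(w,x),(y,z)}(p)| = p_i$ exactly. Combining the two steps, $p_i$ is the minimum of the absolute values of the $e_i$-compatible double ratios at $p$, and it is attained; when $p_i > 0$ all of these values are $\geq p_i > 0$, hence non-zero, so the minimal non-zero value is precisely $p_i$.

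The step deserving the most attention is achievability rather than the inequality: the lower bound alone only shows that $p_i$ bounds the $e_i$-compatible ratios from below, and it is the explicit leaf choice from Lemma~\ref{ratiocoords} --- yielding an intersection path equal to $e_i$ and nothing more --- that guarantees the minimum equals $p_i$ and not the length of some strictly longer path through $e_i$. The non-zero qualifier then merely discards the degenerate value that arises exactly when $e_i$ has been contracted, keeping the formula fixed to the true edge length on the cells where $e_i$ is present.
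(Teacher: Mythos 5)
Your proposal is correct and follows essentially the same route as the paper's own proof: achievability via the explicit leaf choice from Lemma~\ref{ratiocoords}, plus the observation that any $e_i$-compatible intersection path containing additional edges can only be longer. Your version is somewhat more careful about the degenerate (contracted) case, but the argument is the same.
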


\begin{proof}
By \ref{ratiocoords}, the length of $e_i$ does appear on the list of values, up to a sign change. Notice, however, that the lengths of the segments are all positive. So any path containing $e_i$ and other edges must be strictly longer than the path containing only $e_i$ and hence have a larger absolute value.
\end{proof}

\begin{remark}
The definition of $d_{(w,x),(y,z)}$ clearly agrees on all copies of points from different cones that are identified in $\H(\overline{\sigma})$ because it only depends on the metric structure on the graph. So we can think of $d_{(w,x),(y,z)}$ as a function from $\H(\overline{\sigma})$ to $\R$.
\end{remark}

\begin{remark}
Notice that $d_{(x,w),(y,z)} = -d_{(w,x),(y,z)} = -d_{(x,w),(z,y)}$. Also notice that $d_{(y,z),(w,x)} = d_{(w,x),(y,z)}$. We say that two double ratios are equivalent if they differ only by these kinds of reorderings (but respect the vertex pairs). The equivalence classes depend only on a choice of $4$ vertices and a way of putting those vertices into disjoint pairs. Hence there are $N = 3\left(\begin{array}{c}n\\4\end{array}\right)$ equivalence classes.
\end{remark}

\begin{dfn}
Pick $N = 3\left(\begin{array}{c}n\\4\end{array}\right)$ double ratios, one from each equivalence class, and order them $(d_1, \ldots d_N)$. Define $\phi: \H(\overline{\sigma}) \to \R^N$ by $p \mapsto (d_1(p),\ldots,d_N(p))$.
\end{dfn}

\begin{cor}\label{latticeindex}
Let $q \in \phi(\H(\overline{\sigma}))$ such that the coordinates of $q$ are integral. Then for any $p \in \H(\overline{\sigma})$ such that $q = \phi(p)$, the coordinates of $p$ (in any chart) are integral.
\end{cor}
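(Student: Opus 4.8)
The plan is to deduce integrality of the edge-length coordinates of $p$ from integrality of the coordinates of $q$, using Lemma \ref{ratiocoords} to express each edge length as the absolute value of a double ratio. The essential observation is that $\phi$ records the value of exactly one double ratio from each equivalence class, while the reordering relations identify every double ratio with $\pm$ its class representative; so the hypothesis that all coordinates of $q = \phi(p)$ are integers amounts to saying that \emph{every} double ratio takes an integer value at $p$.

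Concretely, I would fix a chart for the cell containing $p$, coming from some $D(\overline{\sigma})_{G(\overline{\sigma},\overline{\nu})}$, and write $(p_1,\ldots,p_m)$ for the coordinates of $p$ in that chart (the lengths of the bounded edges); the goal is to show each $p_i \in \Z$. By Lemma \ref{ratiocoords}, for each index $i$ there exist four leaves $w,x,y,z \in \overline{\lambda}$ with $|d_{(w,x),(y,z)}(p)| = p_i$, where $d_{(w,x),(y,z)}$ is regarded, as in the remark following its definition, as a function on all of $\H(\overline{\sigma})$.

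Next I would place this double ratio among the chosen coordinate functions of $\phi$. By construction $\phi = (d_1,\ldots,d_N)$ uses one representative from each of the $N = 3\binom{n}{4}$ equivalence classes, and the relations $d_{(x,w),(y,z)} = -d_{(w,x),(y,z)}$ and $d_{(y,z),(w,x)} = d_{(w,x),(y,z)}$ show that any two equivalent double ratios agree up to sign as functions on $\H(\overline{\sigma})$. Hence $d_{(w,x),(y,z)} = \pm\, d_j$ for the representative $d_j$ of its class, so $d_{(w,x),(y,z)}(p) = \pm\, d_j(p) = \pm\, q_j$. Since $q$ has integral coordinates, $q_j \in \Z$, and therefore $p_i = |d_{(w,x),(y,z)}(p)| = |q_j| \in \Z$. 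As $i$ was arbitrary, every coordinate of $p$ in this chart is an integer.

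Finally I would note that the conclusion does not depend on the chart: the functions $d_{(w,x),(y,z)}$ depend only on the metric graph $G(\overline{\lambda},\overline{E})(p)$, and any two charts for a single cell are lattice-equivalent, so the affine change of coordinates between them preserves the integer lattice. The only genuine content is the bookkeeping of the middle step — checking that the double ratio furnished by Lemma \ref{ratiocoords} really is a fixed coordinate function of $\phi$ up to sign — and this is immediate from the definition of the equivalence classes. In particular, no minimality argument such as the refinement in Lemma \ref{ratiocoord} is needed for integrality.
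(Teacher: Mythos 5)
Your proof is correct and follows essentially the same route as the paper's, which simply asserts in two sentences that the edge lengths appear among the integral coordinates of $q$; you supply the justification the paper leaves implicit, namely Lemma \ref{ratiocoords} together with the observation that equivalent double ratios agree up to sign, so each edge length is $|q_j|$ for some $j$. Your closing remarks about chart-independence via lattice-equivalence and the dispensability of Lemma \ref{ratiocoord} are both accurate.
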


\begin{proof}
Being integral in the image means every coordinate is integral, including the ones that measure the edge lengths. So any preimage point has edge lengths that are all integral.
\end{proof}

\begin{cor}
For each face $Y$ of $D(\overline{\sigma})_{G(\overline{\sigma},\overline{\nu})}$, $\phi \circ c_{\hat{Y}}$ is a linear isomorphism onto its image.
\end{cor}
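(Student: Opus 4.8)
The plan is to combine the two facts already established about double ratios: their linearity on faces and the fact that edge lengths are recoverable from them. First I would observe that $\phi \circ c_{\hat{Y}}$ is linear. Each coordinate of $\phi$ is a double ratio $d_{(w,x),(y,z)}$, and the lemma immediately preceding \ref{ratiocoords} shows that $d_{(w,x),(y,z)} \circ c_{\hat{Y}}$ is linear on the face $Y$. A map into $\R^N$ all of whose coordinate functions are linear is itself linear, so $\phi \circ c_{\hat{Y}}$ is linear.

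Since a linear map is automatically a linear isomorphism onto its image as soon as it is injective, the remaining task is injectivity. Here I would invoke \ref{ratiocoords} together with its refinement \ref{ratiocoord}. The face $Y$ corresponds to the degeneration $H$ of $G$ obtained by contracting the coordinate directions set to zero along $Y$; the free coordinates of $Y$ are exactly the lengths of the bounded edges of $H$. By \ref{ratiocoords}, applied to $H$ as in the degeneration case of that proof, each such edge length equals $|d_{(w,x),(y,z)}|$ for a suitable choice of $w,x,y,z \in \overline{\lambda}$, and by \ref{ratiocoord} it is pinned down as the minimal nonzero absolute value among the $e_i$-compatible double ratios. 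Thus the vector $\phi(c_{\hat{Y}}(p))$ determines every free coordinate of $p$, while the remaining coordinates vanish identically on $Y$. Hence any two points of $Y$ with the same image coincide, so $\phi \circ c_{\hat{Y}}$ is injective.

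Combining the two steps, $\phi \circ c_{\hat{Y}}$ is an injective linear map and therefore a linear isomorphism onto its image. The one point that needs care — and where I would slow down — is the passage to faces: I must check that the edge-length recovery of \ref{ratiocoords} genuinely applies to the degenerate graph $H$ rather than to the original trivalent tree $G$, using the adjacent-vertex version of the argument noted in the proof of \ref{ratiocoords}, so that every nonzero coordinate of a point on $Y$ is realized as some double ratio. Everything else is a formal consequence of linearity plus injectivity.
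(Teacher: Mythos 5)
Your proposal is correct and matches the paper's argument: the paper likewise deduces linearity from the linearity of the double-ratio coordinate functions and injectivity from the fact that the domain's coordinates (the edge lengths) are recoverable among the double ratios via \ref{ratiocoords}. Your extra care about the degenerate graph $H$ on a proper face is already handled in the paper's proof of \ref{ratiocoords}, so nothing further is needed.
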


\begin{proof}
Linearity follows from the linearity of the coordinate functions. Injectivity follows from the fact that the coordinates from the domain are coordinate functions.
\end{proof}

\begin{lemma}
The image of $\phi: \H(\overline{\sigma}) \to \R^N$ is independent of $\overline{\sigma}$.
\end{lemma}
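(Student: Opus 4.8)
The goal is to show that the image $\phi(\H(\overline{\sigma})) \subset \R^N$ does not depend on the choice of ramification data $\overline{\sigma}$. The plan is to identify this image purely in terms of the underlying metric-graph geometry, which is exactly the structure that the double ratios record, and to show that this geometry is insensitive to $\overline{\sigma}$.

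First I would observe that $\phi$ factors through the ``forgetting'' of all integer-partition data. Concretely, each cone $D(\overline{\sigma})_{G(\overline{\sigma},\overline{\nu})}$ is a copy of $\R^m_{\geq 0}$ whose points encode a labelled metric graph $G(\overline{\lambda},\overline{E})(p)$, and Definition~\ref{doubleratio} shows that each $d_{(w,x),(y,z)}$ depends \emph{only} on this labelled metric graph, not on $\overline{\sigma}$ or $\overline{\nu}$. Thus $\phi(p)$ is determined by the labelled metric tree underlying $p$. So the plan is to prove that the set of labelled metric trees arising as $G(\overline{\lambda},\overline{E})(p)$, ranging over all acceptable $G(\overline{\sigma},\overline{\nu})$ and all $p \in \R^m_{\geq 0}$, is itself independent of $\overline{\sigma}$; this immediately forces the images to coincide.

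The key step is to understand which trivalent topological types $G$ actually appear. A cone $D(\overline{\sigma})_{G(\overline{\sigma},\overline{\nu})}$ is included only when $G(\overline{\sigma},\overline{\nu})$ is \emph{acceptable}, i.e.\ $I(v) = \tr(K_{\mu_1}K_{\mu_2}K_{\mu_3}) \neq 0$ at every internal vertex $v$. Here I would argue that, for \emph{any} fixed trivalent labelled tree $G$ and \emph{any} assignment $\overline{\sigma}$ to its leaves, there exists at least one choice of bounded-edge partitions $\overline{\nu}$ making $G(\overline{\sigma},\overline{\nu})$ acceptable. The natural way to see this is to exploit the classical interpretation (Remark after \ref{acceptable}): a trivalent tree corresponds to a maximally degenerate admissible cover built from three-point covers, one per vertex, and the product relation $K_{\sigma_1}\cdots K_{\sigma_n}$ having a nonzero coefficient on the identity (the Hurwitz number is positive, or one can use Proposition~\ref{states} to decompose the trace over intermediate partitions $\nu$) guarantees a compatible assignment of cycle-types along the internal edges with every local trace nonzero. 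In other words, acceptability of \emph{some} filling is automatic once the three-point Hurwitz numbers in play are nonzero, and this holds for every $\overline{\sigma}$, so every trivalent labelled topological type $G$ contributes a top cone regardless of $\overline{\sigma}$.

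Granting that, the set of labelled metric trees realised by $\H(\overline{\sigma})$ is exactly the set of all metric trees on the trivalent (and degenerate) topological types with leaf-labels $\overline{\lambda}$, with arbitrary nonnegative edge lengths—a description with no reference to $\overline{\sigma}$. Since $\phi$ sends each such metric tree to the same point of $\R^N$ irrespective of which $\overline{\sigma}$ produced it, I conclude $\phi(\H(\overline{\sigma})) = \phi(\H(\overline{\sigma}'))$ for all $\overline{\sigma},\overline{\sigma}'$. I expect the main obstacle to be the acceptability step: one must rule out the possibility that some $\overline{\sigma}$ forces a topological type $G$ to drop out entirely because \emph{no} internal filling $\overline{\nu}$ is acceptable. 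The cleanest resolution is to note that the map $\phi$ in fact realises $\M_{0,n}$ (whose image is the standard space of labelled metric trees), and to lean on the nonvanishing of the relevant class-algebra traces—using \ref{states} to expand a global trace as a sum of products of local ones—to certify that each vertex can be filled; the delicate point is handling degenerate (higher-valence) vertices, where one should check acceptability on a trivalent refinement first and then degenerate.
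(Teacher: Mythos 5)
Your proof takes a genuinely different --- and considerably more thorough --- route than the paper's. The paper's entire proof is the one-line observation that the double ratios depend only on the underlying labelled metric graph and never mention the integer partitions. It does not address the point you correctly isolate: the \emph{domain} $\H(\overline{\sigma})$ is assembled only from \emph{acceptable} data $G(\overline{\sigma},\overline{\nu})$, so one must still check that the set of labelled metric trees actually realised does not shrink as $\overline{\sigma}$ varies. Your reduction of the lemma to showing that every trivalent topological type admits at least one acceptable filling $\overline{\nu}$, certified by iterated application of Proposition~\ref{states} (each $\tr(K_{\mu_1}K_{\mu_2}K_{\mu_3})$ is a nonnegative integer, so the global trace is positive if and only if some term of the expansion over $\overline{\nu}$ is positive), is exactly the right mechanism; it is the same identity that drives the degree computation in the main theorem. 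What the paper's proof buys is brevity; what yours buys is an actual argument for the nontrivial half of the claim.

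One caveat: your argument genuinely requires $\tr(K_{\sigma_1}\cdots K_{\sigma_n}) \neq 0$, and this hypothesis cannot be dropped. When that trace vanishes (e.g.\ $d=2$, $n=4$, $\overline{\sigma}=\{(2^1),(2^1),(2^1),(1^2)\}$), the nonnegativity of the terms forces \emph{every} filling of \emph{every} trivalent type to be unacceptable, so $\H(\overline{\sigma})$ is empty and its image is empty rather than all of $\tilde{M}$. Thus the lemma as stated fails in that degenerate regime; your proof is correct precisely on the complement, where the Hurwitz number is positive, and it has the virtue of making the needed hypothesis visible in a way the paper's proof does not.
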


\begin{proof}
The definition of $\phi$ used only the metric graph structure and did not mention the integer partitions of $d$ associated with each unbounded edge.
\end{proof}

\begin{remark}
Different choices of double ratios to define the map $\phi$ will change the image, but as Mikhalkin points out, the image differs only by negating some coordinates, which clearly produces an isomorphic polyhedral structure.
\end{remark}

\begin{notation}
Let $\tilde{M} = \phi(\H(\overline{\sigma}))$.
\end{notation}

\begin{thm}
The collections $\phi(\overline{P}) = \{\phi(P_i) | P_i \in \overline{P}\}$ and $\phi(\overline{c}) = \{\phi \circ c_i | c_i \in \overline{c}\}$ give $\tilde{M}$ the structure of a polyhedral complex.
\end{thm}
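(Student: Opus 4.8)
The plan is to verify, one by one, the four defining conditions of a polyhedral complex in \ref{polycomplex} for the triple $(\tilde{M}, \phi(\overline{P}), \phi(\overline{c}))$. The preceding corollary, that each $\phi \circ c_{\hat{Y}}$ is a linear isomorphism onto its image, does most of the local work for free: it says that each proposed chart $\phi \circ c_i \colon X_i \to \phi(P_i)$ is a homeomorphism of the (unchanged) lattice polyhedron $X_i$ onto the cell $\phi(P_i)$, so each $\phi(P_i)$ is a genuine lattice polyhedron in $\R^N$ and each $\phi \circ c_i$ a legitimate chart. The union condition is then immediate, since $\tilde{M} = \phi(\H(\overline{\sigma})) = \phi(\bigcup_i P_i) = \bigcup_i \phi(P_i)$. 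The face condition also transfers directly from $\H(\overline{\sigma})$: if $Y'$ is a face of $X_i$, then because $\H(\overline{\sigma})$ is already a polyhedral complex there is a cell $P_j$ with $c_i(Y') = P_j$, whence $\phi \circ c_i(Y') = \phi(P_j) \in \phi(\overline{P})$.

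The essential difficulty is the intersection condition, and it is made delicate by the fact that $\phi$ is \emph{not} injective on $\H(\overline{\sigma})$: the double ratios depend only on the metric graph and ignore the bounded-edge partitions $\overline{\nu}$, so two distinct cells lying over the same combinatorial type but carrying different $\overline{\nu}$ have the \emph{same} image. The observation that tames this is Lemma \ref{ratiocoord} together with \ref{ratiocoords}: from the vector $\phi(p)$ one recovers every edge length of $G(\overline{\lambda},\overline{E})(p)$ as the minimal nonzero $e_i$-compatible double ratio, and hence recovers the entire underlying metric graph, combinatorial type included. Consequently $\phi(P_i)$ depends only on the combinatorial type $G_Y$ of the face $Y$ with $P_i = c_i(Y)$, not on $\overline{\nu}$; writing $C_{[G'']}$ for this common image, the distinct members of $\phi(\overline{P})$ are exactly the cones $C_{[G'']}$ indexed by the combinatorial types $[G'']$ occurring in degenerations of acceptable trivalent types.

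I would then compute the intersection directly in terms of leaf partitions. A vector $z \in C_{[G'']} \cap C_{[G''']}$ is, by the recovery above, the double-ratio image of a single metric graph whose combinatorial type is simultaneously a degeneration of $G''$ and of $G'''$; by \ref{distinctpartitions} such common degenerations are classified by their sets of leaf partitions (\ref{partitionpi}), and the least degenerate common degeneration $H$ is the tree whose partition set is $\Pi'' \cap \Pi'''$ (the common partitions are pairwise compatible, so they determine a tree, realized via \ref{graphisomorph}). Every common degeneration is a further degeneration of $H$, so $C_{[G'']} \cap C_{[G''']} = C_{[H]}$. Since $H$ is obtained from $G''$ by contracting exactly the edges whose partitions lie outside $\Pi'''$, the cone $C_{[H]}$ is the image under $\phi$ of the corresponding face of the cone over $G''(\overline{\sigma},\overline{\nu})$, and that face is a cell of $\H(\overline{\sigma})$; thus $\phi(P_i) \cap \phi(P_j) = \phi(P_k)$ for some $P_k \in \overline{P}$, as required. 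I expect this identification of the intersection with the cone of the maximal common degeneration to be the main obstacle, precisely because I must rule out the intersection being strictly larger than a single cone, which is where the recovery Lemma \ref{ratiocoord} is indispensable.

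Finally, for the lattice-equivalence condition I would treat two charts with a common image $\phi(P_i) = \phi(P_j)$, which now genuinely occurs whenever the two cells share a combinatorial type but differ in $\overline{\nu}$. The transition map $(\phi \circ c_j)^{-1} \circ (\phi \circ c_i) \colon X_i \to X_j$ is a composite of affine isomorphisms and hence affine, carries $X_i$ onto $X_j$, and by \ref{latticeindex} sends integral points to integral points in both directions; therefore it restricts to an isomorphism of the lattices $V_{X_i} \cap \Z^j$ and $V_{X_j} \cap \Z^k$, so the two charts are lattice-equivalent. Having checked all four conditions of \ref{polycomplex}, I would conclude that $\tilde{M}$ carries the structure of a polyhedral complex.
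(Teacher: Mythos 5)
Your proposal is correct, and your treatment of the chart, union, face, and lattice-equivalence conditions of \ref{polycomplex} matches the paper's (the paper leaves the face condition implicit, and handles lattice-equivalence by post-composing the already-equivalent charts with the linear map $\phi$, while you argue via \ref{latticeindex}; both work). Where you genuinely diverge is the intersection condition. The paper's proof is a reduction to the corresponding fact already proved for $\H(\overline{\sigma})$: given two image cells, it chooses preimage cells carrying the same $\overline{\nu}$, intersects upstairs, and pushes down, the point being that $\phi(P_i \cap P_j) = \phi(P_i) \cap \phi(P_j)$ once the $\overline{\nu}$-ambiguity is removed. You instead work entirely downstairs: you use \ref{ratiocoords} and \ref{ratiocoord} to show that a point of $\tilde{M}$ determines its underlying metric graph, conclude that an image cell depends only on the combinatorial type and not on $\overline{\nu}$, and then identify the intersection of two such cones with the cone of the maximal common degeneration, classified by the intersection of the leaf-partition sets via \ref{distinctpartitions} and \ref{graphisomorph}. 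Your route is more self-contained and makes explicit the recovery of the metric graph from the double ratios (a fact the paper only invokes later, in the degree computation), but it essentially re-runs the combinatorial argument already carried out in the proof that $\H(\overline{\sigma})$ is a polyhedral complex; the paper's reduction avoids that repetition at the cost of leaving the identification $\phi(P_i)\cap\phi(P_j)=\phi(P_i\cap P_j)$ for matching $\overline{\nu}$ to the reader, which itself quietly relies on the same recovery lemma you use.
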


\begin{proof}
The space $\tilde{M}$ is a subset of $\R^N$, which gives it a topology. Because the maps are linear, the images of the closed polyhedra are still closed in $\R^N$. In addition, because the maps are linear isomorphisms, they are also homoemorphisms on the cones. The space $\tilde{M}$ is the image of the union of the polyhedra $\overline{P}$, but this is trivially the union of the images, which are the polyhedra in $\phi(\overline{P})$. 

As before, showing that the intersection of two polyhedra is a polyhedron is a more subtle than the rest of this proof. Note that $\phi$ forgets the integer partitions, $\overline{\nu}$. Given two polyhedra in $\phi(\H(\overline{\sigma}))$, there are many choices of preimage polyhedra. As long as we pick the preimages with the same choice of $\overline{\nu}$, then the intersection of the lifts will have as its image the intersection of the images.

We already know that the charts in $\overline{c}$ are lattice-equivalent on intersections. In $\phi(\overline{c})$, each of these is post-composed with $\phi$, the same linear isomorphism, which clearly preserves lattice-equivalence. So, $\tilde{M}$ is a polyhedral complex.
\end{proof}

\begin{lemma}
Given the polyhedral structures above, $\phi: \H(\overline{\sigma}) \to \tilde{M}$ is a morphism of polyhedral complexes.
\end{lemma}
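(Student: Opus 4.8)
The plan is to verify directly the two conditions in the definition of a morphism of polyhedral complexes, leaning on the structural facts already established about $\phi$ and about $\tilde{M}$. First I would check continuity of $\phi$. Each coordinate of $\phi$ is a double ratio $d_{(w,x),(y,z)}$, which by the remark following \ref{ratiocoord} is well defined on all of $\H(\overline{\sigma})$ because it depends only on the metric graph structure, and this structure is exactly what the gluing identifications preserve. Restricted to any face it is linear, hence continuous; since $\H(\overline{\sigma})$ carries the quotient topology from the cells and continuity is a local condition, $\phi$ is continuous.

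Next I would verify that each polyhedral cell of $\H(\overline{\sigma})$ maps entirely into a single cell of $\tilde{M}$. This is immediate from the construction of $\tilde{M}$, whose cells are precisely the sets $\phi(P_i)$ for $P_i \in \overline{P}$, equipped with the charts $\phi \circ c_i \in \phi(\overline{c})$. Thus, for a given cell $P_i$ of $\H(\overline{\sigma})$, I take $P_j' = \phi(P_i)$; trivially $\phi(P_i) \subseteq \phi(P_i)$, so $P_i$ maps into this target cell.

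The remaining condition is that $c_j'^{-1} \circ \phi \circ c_i$ be affine and integral where defined. The key move is to select, as the chart for $\phi(P_i)$, the map $\phi \circ c_i$ itself; this is a legitimate chart since the corollary that $\phi \circ c_{\hat{Y}}$ is a linear isomorphism onto its image guarantees it is a homeomorphism of the lattice polyhedron $X_i$ with $\phi(P_i)$, and $\phi(\overline{c})$ contains it by construction. With $c_j' = \phi \circ c_i$ the composition collapses:
$$c_j'^{-1} \circ \phi \circ c_i = (\phi \circ c_i)^{-1} \circ \phi \circ c_i = \mathrm{id}_{X_i},$$
which is affine and integral trivially. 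Having checked continuity, the cell-mapping condition, and the affine-integral condition, we conclude that $\phi$ is a morphism of polyhedral complexes.

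I expect the only real subtlety to be the bookkeeping around chart selection. The definition of a morphism only asks that \emph{some} target cell containing the image, together with \emph{its} chart, make the composition affine and integral. Because the construction of $\tilde{M}$ supplies $\phi \circ c_i$ as one of its charts, the composition reduces to the identity and nothing further is needed. Should one instead be forced to use a different designated chart $\phi \circ c_k$ for the cell $\phi(P_i) = \phi(P_k)$, the composition would be $(\phi \circ c_k)^{-1} \circ (\phi \circ c_i)$, which is still affine and integral by the lattice-equivalence of the charts of $\tilde{M}$ established when it was shown to be a polyhedral complex; so the conclusion is insensitive to that choice.
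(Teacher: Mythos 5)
Your proof is correct and follows essentially the same route as the paper: both arguments observe that the cells of $\tilde{M}$ are by construction the $\phi$-images of the cells of $\H(\overline{\sigma})$ with charts $\phi \circ c_i$, so that the composition $c_j'^{-1} \circ \phi \circ c_i$ cancels to the identity (or, for a different representative chart, to a map already known to be affine and integral by the lattice-equivalence established for $\tilde{M}$). Your additional explicit check of continuity and the remark on chart selection only make the paper's terse argument more careful.
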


\begin{proof}
Be the very definition of $\tilde{M}$, each polyhedral cell from $\H(\overline{\sigma})$ maps into a polyhedral cell of $\tilde{M}$. The check that $\phi$ is locally affine and integral is trivial: substituting in the definition of the charts on $\tilde{M}$, we see that $\phi$ is composed with its inverse. This cancels, leaving the original maps from the lattice equivalence in $\H(\overline{\sigma})$, for which the desired property has already been shown.
\end{proof}

\begin{thm}
Given the weights defined in \ref{weightsdfn}, $\deg(\phi) = \frac{1}{d!}\tr(K_{\sigma_1} \cdots K_{\sigma_n})$.
\end{thm}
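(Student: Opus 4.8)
The plan is to fix a point $q$ in the interior of a top-dimensional cell of $\tilde{M}$, compute $\deg(\phi)_q$ as a weighted sum over the fiber $\phi^{-1}(q)$, and show that this sum collapses to $\frac{1}{d!}\tr(K_{\sigma_1}\cdots K_{\sigma_n})$ independently of $q$. First I would identify the fiber. Since $q$ lies in the interior of a top cell, it determines a trivalent topological type $G$ (a metric tree with all edge lengths positive and no degenerations). A preimage $p$ must lie in a cell that maps onto this top cell, hence in the interior of a top cell of $\H(\overline{\sigma})$; such a cell is a full copy of $\R^m_{\geq 0}$ attached to some acceptable labeling $G(\overline{\sigma},\overline{\nu})$, and its image under $\phi$ is the top cell of $\tilde{M}$ of topological type $G$. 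Thus every preimage has topological type exactly $G$, and the fiber is in bijection with the acceptable choices of $\overline{\nu}$ on the bounded edges of $G$. Because $\phi \circ c_i$ is a linear isomorphism onto its image and, by Corollary \ref{latticeindex}, carries the integral points of the domain cell bijectively onto the integral points of the image cell, the lattice index $\ind(\phi)_p = 1$ for every $p$ in the fiber.

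With these identifications, $\deg(\phi)_q = \sum_{\overline{\nu}} w(P_{G,\overline{\nu}})$, the sum ranging over acceptable $\overline{\nu}$. Since an unacceptable labeling has some internal vertex with $I(v) = 0$, and hence weight zero, I may extend the sum over \emph{all} labelings $\overline{\nu}$ without changing its value. Substituting the weight from Definition \ref{weightsdfn} and pulling out the common factor $\frac{1}{d!}$, the whole computation reduces to the purely algebraic identity
$$\sum_{\overline{\nu}} \left(\prod_{e_i \in \overline{E}} \frac{1}{|K_{\nu_i}|}\right)\left(\prod_{v \in V \setminus L(G)} I(v)\right) = \tr(K_{\sigma_1}\cdots K_{\sigma_n}),$$
which I would prove for every trivalent tree $G$ with $n$ leaves. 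Proving it for all $G$ simultaneously establishes both the claimed value and the fact that $\deg(\phi)_q$ is independent of $q$, so that $\deg(\phi)$ is well defined.

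I would prove this identity by induction on $n$. The base case $n = 3$ is immediate: $G$ is a single internal vertex with no bounded edges, the sum has one (empty) product term, and its value is exactly $I(v) = \tr(K_{\sigma_1}K_{\sigma_2}K_{\sigma_3})$. For the inductive step, Lemma \ref{twoleaftripod} supplies an internal vertex $v$ at which two unbounded edges, say those carrying $\sigma_a$ and $\sigma_b$, meet; its third edge is a bounded edge $e_k$ with partition $\nu_k$. Deleting the two leaves and $v$ and promoting $e_k$ to an unbounded edge yields a trivalent tree $G'$ with $n-1$ leaves, in which $e_k$ now carries a fixed profile $\nu_k$, while every other vertex and bounded edge contributes exactly as in $G$. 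Separating the factors attached to $v$ and $e_k$ gives
$$\sum_{\overline{\nu}} \left(\prod_{e_i} \frac{1}{|K_{\nu_i}|}\right)\left(\prod_{v} I(v)\right) = \sum_{\nu_k} \frac{1}{|K_{\nu_k}|}\,\tr(K_{\sigma_a}K_{\sigma_b}K_{\nu_k})\, S_{G'}(\nu_k),$$
where $S_{G'}(\nu_k)$ is the analogous sum for $G'$ with the new leaf labeled $\nu_k$. By the inductive hypothesis $S_{G'}(\nu_k) = \tr(K_{\nu_k}\,\Pi)$, where $\Pi$ is the product of the $K_{\sigma_i}$ over the remaining leaves; here I use that the class algebra is commutative (being the centre of $\R[S_d]$) to place $\nu_k$ first. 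Applying the Orthogonal Basis Lemma (Proposition \ref{states}) with $\omega_1 = K_{\sigma_a}K_{\sigma_b}$ and $\omega_2 = \Pi$ collapses the sum to $\tr(K_{\sigma_a}K_{\sigma_b}\Pi) = \tr(K_{\sigma_1}\cdots K_{\sigma_n})$, completing the induction.

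The hard part is not the algebra, which falls out cleanly from the gluing identity of Proposition \ref{states} once the tree is peeled off one cherry at a time; rather, it is the geometric bookkeeping of the first paragraph — verifying that the fiber over a generic $q$ consists precisely of the $\overline{\nu}$-labelings of a single fixed topological type and that each contributes with lattice index $1$. The commutativity of the class algebra is what makes the tree-order irrelevant and lets the same trace appear no matter which trivalent $G$ we start from, which is exactly the independence of $q$ needed for $\deg(\phi)$ to exist.
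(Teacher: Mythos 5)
Your proposal is correct and follows essentially the same route as the paper's proof: reduce to a weighted sum over the fiber with lattice index $1$, peel off a "cherry" vertex supplied by Lemma \ref{twoleaftripod}, and collapse the sum over $\nu_k$ via the Orthogonal Basis Lemma \ref{states}. The only (cosmetic) difference is that you run the induction on the purely combinatorial identity rather than on the degree of the smaller morphism $\hat{\phi}: \hat{H} \to \hat{M}$ as the paper does.
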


\begin{proof}
We will prove this theorem by induction on the number of internal vertices in the topological types, $k = n-2$. But first we simplify the computation in all cases.

Pick $q$ in the interior of a top-dimensional polyhedral cell in $\tilde{M}$. All of the polyhedral cells map isomorphically through $\phi$, so every preimage of $q$ is in the interior of top-dimension polyhedral cell in $\H(\overline{\sigma})$. By \ref{latticeindex}, the lattice from each of these domain points maps onto the entire lattice in the codomain. So for any preimage $p$, $\ind(\phi)_p = 1$. Hence we must compute
$$\deg(\phi)_q = \sum_{\begin{array}{c}p \in \H(\overline{\sigma})\\\phi(p) = q\end{array}} w(p) \cdot \ind(\phi)_p = \sum_{\begin{array}{c}p \in \H(\overline{\sigma})\\\phi(p) = q\end{array}} w(p).$$

Recall that, for $p$ in the interior of $D(\overline{\sigma})_{G(\overline{\lambda},\overline{\sigma},\overline{E},\overline{\nu})}$, $$w(p) = \frac{1}{d!} \left(\prod_{e_i \in \overline{E}} \frac{1}{|K_{\nu_i}|}\right) \left(\prod_{v \in (V \setminus L(G))} I(v)\right).$$

First, suppose $k=1$, the smallest possible number of internal vertices in a trivalent graph. In the unique (topological type of a) trivalent graph with only one internal vertex, there are no bounded edges. In addition, there is only one vertex of degree $3$. If $v$ is that degree $3$ vertex, then $I(v) = \tr(K_{\sigma_1} K_{\sigma_2} K_{\sigma_3})$. So for any $p \in \phi^{-1}(q)$,
$$w(p) = \frac{1}{d!} \cdot 1 \cdot \tr(K_{\sigma_1} K_{\sigma_2} K_{\sigma_3}).$$
Notice also that there are no choices for $\overline{\nu}$ in a graph without bounded edges. So there is only this one preimage point $p = \phi^{-1}(q)$, and this single index is actually the the degree.

Now suppose that $k > 1$ and that the expression is known for all trivalent trees with $j < k$ internal vertices. The topological type of the preimages of $q$ can be determined from the coordinates of $q$, as seen in \cite{MikM}. 
Call this topological type $G = (V,E)$. By \ref{twoleaftripod}, $G$ has an internal vertex that is the intersection of two unbounded edges. Permute the labeling $\overline{\lambda}$ (simultaneously on all of $\H(\overline{\sigma})$) so that the unbounded edges $\lambda_{n-1}$ and $\lambda_n$ intersect at the internal vertex $\hat{v}$. In addition, permute the labeling $\overline{E}$ such that the third edge at $\hat{v}$ is $e_m$.

Consider the following topological type of graphs, $\hat{G} = (\hat{V},\hat{E})$, defined as follows. Thinking of $\overline{\lambda}$ as a labeling of $L(G)$, let $\hat{V} = V \setminus \{\lambda_{n-1}, \lambda_n\}$. Thinking of $\overline{\lambda}$ as a labeling of the unbounded vertices, let $\hat{E} = E \setminus \{\lambda_{n-1}, \lambda_n\}$. In short, $\hat{G}$ is formed from $G$ be removing two unbounded edges that intersect (and their leaves).
\begin{center}
\psset{xunit=0.5cm,yunit=0.5cm,algebraic=true,dotstyle=o,dotsize=3pt 0,linewidth=0.8pt,arrowsize=3pt 2,arrowinset=0.25}
\begin{pspicture*}(-4,-1.1)(16,7.7)
\psline(-2.94,0.94)(-0.84,1.34)
\psline(-1.6,-0.54)(-0.84,1.34)
\psline(-0.84,1.34)(1.02,2.82)
\psline(1.02,2.82)(3.34,1.42)
\psline(1.02,2.82)(0.76,4.3)
\psline(0.76,4.3)(1.5,4.8)
\psline(0.76,4.3)(0.02,4.76)
\pscircle[linestyle=dashed,dash=2pt 2pt](0.76,4.5){0.6}
\rput[tl](2.1,5.2){$\sigma_{n-1}$}
\rput[tl](-1.55,5.2){$\sigma_n$}
\rput[tl](-1.9,3.8){$\nu_m$}
\rput[tl](-3.6,1.2){$\sigma_1$}
\rput[tl](-2.4,-0.3){$\sigma_2$}
\rput[tl](3.4,1.5){$\sigma_3$}
\rput[tl](0,1.8){$\nu_1$}
\psline{->}(-0.8,3.6)(0.8,3.6)
\rput[tl](0,7.2){$G$}

\psline{->}(4.7,1.8)(6.7,1.8)

\psline(7.06,0.94)(10.16,1.34)
\psline(9.4,-0.54)(10.16,1.34)
\psline(10.16,1.34)(12.02,2.82)
\psline(12.02,2.82)(14.34,1.42)
\psline(12.02,2.82)(11.76,4.3)
\rput[tl](11,5.3){$\nu_m$}
\rput[tl](6.4,1.2){$\sigma_1$}
\rput[tl](8.6,-0.3){$\sigma_2$}
\rput[tl](14.4,1.5){$\sigma_3$}
\rput[tl](11,1.8){$\nu_1$}
\rput[tl](11,7.3){$\hat{G}$}
\end{pspicture*}
\end{center}
Notice that $\hat{v}$, which was internal in $G$, is now a leaf. So $\overline{\lambda}_{\hat{G}} = \{\lambda_1,\ldots,\lambda_{n-2},e_m\}$ and $\overline{E}_{\hat{G}} = \{e_1,\ldots,e_{m-1}\}$ are labelings of the unbounded and bounded edges of $\hat{G}$ respectively. Moreover, $\overline{\sigma}_{\hat{G}} = \{\sigma_1,\ldots,\sigma_{n-2},\nu_m\}$ and $\overline{\nu}_{\hat{G}} = \{\nu_1,\ldots,\nu_{m-1}\}$ are collections of integer partitions associated to these edges. This information determines a polyhedral cell in a version of $\H(\overline{\sigma})$, $\hat{H}$, with one fewer internal vertices.

Each polyhedral cell containing a preimage of $q$ has a distinct image in $\hat{H}$ obtained by simply forgetting the length of $e_m$. Similarly, $q$ has an analogous point in the image of these points. 
So 
$$\deg(\phi)_q = \sum_{\begin{array}{c}p \in \H(\overline{\sigma})\\\phi(p) = q\end{array}} w(p) = \sum_{\begin{array}{c}p \in \H(\overline{\sigma})\\\phi(p) = q\end{array}} \frac{1}{d!}\prod_{e_i \in \overline{E}} \frac{1}{|K_{\nu_i}|} \prod_{v \in (V \setminus L(G))} I(v)$$
Recall that the data of a preimage point is the same as a choice of $\overline{\nu}$. Also, we can factor out the parts of the product coming from the vertex $\hat{v}$.
$$= \sum_{\overline{\nu}} \left(\frac{1}{|K_{\nu_m}|} I(\hat{v})\right) \left(\frac{1}{d!} \prod_{e_i \in \overline{E}_{\hat{G}}} \frac{1}{|K_{\nu_i}|} \prod_{v \in (\hat{V} \setminus L(\hat{G}))} I(v)\right)$$
The sum over $\overline{\nu}$ can be decomposed into a sum over each term in $\overline{\nu}$. Notice that the two factored terms only depend on $\nu_m$, so we can bring it outside that part of the sum.
$$= \sum_{\nu_m} \left(\frac{1}{|K_{\nu_m}|} I(\hat{v})\right) \sum_{\overline{\nu}_{\hat{G}}} \left(\frac{1}{d!} \prod_{e_i \in \overline{E}_{\hat{G}}} \frac{1}{|K_{\nu_i}|} \prod_{v \in (\hat{V} \setminus L(\hat{G}))} I(v)\right)$$
By our inductive hypothesis, the internal sum is just the degree of the morphism $\hat{\phi}: \hat{H} \to \hat{M}$, so we may substitute.
$$\deg(\phi)_q = \sum_{\nu_m} \left(\frac{1}{|K_{\nu_m}|} \tr(K_{\sigma_{n-1}} K_{\sigma_n} K_{\nu_m})\right) \left(\frac{1}{d!} \tr(K_{\sigma_1 }\cdots K_{\sigma_{n-2}} K_{\nu_m})\right)$$
Not every possible such product appears, but the ones that have been removed have value zero (see \ref{acceptable}), so we may assume they are present in this sum as well. Then the orthogonal basis lemma, \ref{states}, allows us to simplify to
$$\deg(\phi)_q = \frac{1}{d!} \tr(K_{\sigma_1} \cdots K_{\sigma_n}).$$
Notice that this expression does not depend on the polyhedron containing $q$, so
$$\deg(\phi) = \frac{1}{d!}\tr(K_{\sigma_1} \cdots K_{\sigma_n}).$$
\end{proof}

\begin{remark}
If the last proof is hard to conceptualize, think about it in a slightly different way. Unpacking the definition of $I(v) = \tr(K_{\mu_1}K_{\mu_2}K_{\mu_3})$ in the sum in the previous proof, we see that every integer partition, $\nu_i$, will appear in two distinct trace functions in the product and each integer partition $\sigma_i$ will appear in one. Repeated applications of the orthogonal basis lemma will absorb every factor of $\frac{1}{|K_{\nu_i}|}$ and combine the products of traces into a single trace containing one copy of $K_{\sigma_i}$ for each integer partition in $\overline{\sigma}$. 
\end{remark}

\begin{remark}
The space $\tilde{M}$ is exactly Mikhalkin's moduli space of tropical, genus $0$ curves with $n$ marked points, $\M_{0,n}$. He uses open cells, so his polyhedra correspond to the relative interiors of my polyhedral cells. He also uses \textit{combinatorial types} of graphs, which correspond to labeling just the leaves of the trees; my function $G(\overline{\lambda},\overline{E})$ also labels the the bounded edges, but this labeling is just notation to talk about the integer partitions $\overline{\nu}$ in a consistent manner. We then both add lengths to the bounded edges. This means that the map $\phi$ factors through his embedding of $\M_{0,n}$ into $\R^m$.
\end{remark}

So, we summarize:

\begin{thm}
Given a ramification profile, $\overline{\sigma}$, there is a connected polyhedral complex, $\H(\overline{\sigma})$ and a morphism of polyhedral complexes $\phi: \H(\overline{\sigma}) \to \M_{0,n}$ such that $\deg(\phi) = h(\ol{\sigma})$. Moreover, there is a modular interpretation of points in $\H(\overline{\sigma})$ as tropical ramified covers so that $\phi$ is the forgetful morphism taking a cover to its codomain with marked points at the ramification values.
\end{thm}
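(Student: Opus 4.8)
The plan is to assemble this statement from the results already established, since each clause has essentially been verified in isolation and the summary theorem only has to collect them. First I would invoke the theorem showing that $\H(\overline{\sigma})$ is a polyhedral complex together with the theorem showing it is connected; these give the first assertion verbatim. Next, using the remark that identifies $\tilde{M} = \phi(\H(\overline{\sigma}))$ with Mikhalkin's moduli space $\M_{0,n}$, together with the lemma that $\phi: \H(\overline{\sigma}) \to \tilde{M}$ is a morphism of polyhedral complexes, I obtain the promised morphism $\phi: \H(\overline{\sigma}) \to \M_{0,n}$.

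For the degree equality I would simply chain two computations that the paper has already carried out separately. The degree theorem above gives $\deg(\phi) = \frac{1}{d!}\tr(K_{\sigma_1}\cdots K_{\sigma_n})$, and in particular establishes that this value is independent of the chosen point $q$, so that $\deg(\phi)$ is genuinely well-defined. On the classical side, Proposition \ref{pigens} together with the proposition counting conjugacy classes of monodromy representations gives $h(\overline{\sigma}) = \frac{1}{d!}\tr(K_{\sigma_1}\cdots K_{\sigma_n})$. Equating the two right-hand sides yields $\deg(\phi) = h(\overline{\sigma})$, which is the arithmetic heart of the statement but is now purely formal.

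For the \emph{moreover} clause I would recall that each point $p$ in the interior of a cone $D(\overline{\sigma})_{G(\overline{\sigma},\overline{\nu})}$ (Definition \ref{Dsig}) determines a metric tree $G(\overline{\lambda},\overline{E})(p)$ carrying the integer partition $\sigma_i$ on each marked unbounded edge and the integer partition $\nu_i$ on each bounded edge; this is exactly the data of a tropical ramified cover in the sense defined in the introduction. Because $\H(\overline{\sigma})$ is formed by gluing these cones precisely along isomorphisms respecting $\overline{\lambda}$ and $\overline{\nu}$, the assignment descends to a consistent modular interpretation on $\H(\overline{\sigma})$. Since $\phi$ is defined purely through the double ratios of the underlying metric graph, it forgets the partitions $\overline{\nu}$ and retains only the marked metric tree, namely the codomain curve in $\M_{0,n}$; this is exactly the tropical analogue of the forgetful morphism.

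I expect the \emph{moreover} clause, not the numerical equality, to be the delicate point. The modular interpretation is cleanest on interiors of top-dimensional cells, where a point corresponds to a unique acceptable choice $\overline{\nu}$, whereas on lower faces a single point may lie in the closure of several cones carrying different partitions, so a cover is not literally unique there. I would therefore state the modular interpretation as holding on the dense locus of trivalent combinatorial types and note, as the earlier remarks already caution, that upgrading this to a genuine ``tropical admissible cover'' equipped with an actual covering map is left open; the assertion here concerns only the curve-plus-partition data, which the cones parametrize by construction.
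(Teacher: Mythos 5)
Your proposal is correct and matches the paper exactly: the paper states this theorem as a summary ("So, we summarize:") with no separate proof, intending precisely the assembly you describe — the polyhedral-complex and connectedness theorems, the identification of $\tilde{M}$ with $\M_{0,n}$, the degree computation $\deg(\phi) = \frac{1}{d!}\tr(K_{\sigma_1}\cdots K_{\sigma_n})$ chained with the class-algebra expression for $h(\overline{\sigma})$, and the modular reading of the cones. Your added caution about the modular interpretation on lower-dimensional faces is a reasonable refinement consistent with the paper's own remarks on tropical ramified covers.
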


\begin{remark}
Mikhalkin compactifies his space in \cite{MikM} by allowing the lengths of the bounded edges to grow to infinity and shows that the compact object is still smooth. We could do the analogous construction and extend our morphism to the compactified case. However, degree is defined for us only in the interiors of the top dimensional cones, so this adds nothing. In addition, the top-dimensional cones already correspond to the most degenerate classical curves, and further degenerating adds no new interesting curve from the modular perspective. However, mathematicians with a more combinatorial perspective on tropical geometry may wish to see Mikhalkin's discussion of the compactification in \cite{MikM}.
\end{remark}

\subsection{Discussion}

\begin{remark}
If we let $d = 1$ in the construction above, then we get a version of our Hurwitz space. However, notice that there would then be no choices for the labels of the edges, so $\phi$ would be an injection, and we would have recovered the construction of Mikhalkin's moduli space, $\M_{0,n}$.
\end{remark}

\begin{remark}
The condition of being balanced as a polyhedral complex is what we need to guarantee that there can be a consistent notion of intersection theory (including the notion of degree) in tropical geometry. While we have not embedded $\H(\overline{\sigma})$ in a large vectorspace, the result above indicates that there is not an obstruction to putting a tropical structure on $\H(\overline{\sigma})$ compatible with the structure that Mikhalkin gives to $\M_{0,n}$ (i.e. finding an embedding of $\H(\overline{\sigma})$ as a balanced polyhedral complex, which would make it an honest tropical object).
\end{remark}

\begin{questions}
There are several questions that need answers.
\begin{itemize}
\item Does $\H(\overline{\sigma})$ have an embedding as a (simply) balanced polyhedral complex?
\item Does the the stratification of $\H(\overline{\sigma})$ correspond to the stratification of the boundary of the classical Hurwitz space?
\item Is there an algorithm for transforming the data from our definition of a tropical ramified cover into something that looks more like an honest tropical admissible cover? Are those covers the tropicalization of a classical admissible covers?
\item Will this construction carry over into the case of higher genus? Recent work by Kozlov (\cite{Kozlov2,Kozlov}) and Caporaso (\cite{Caporaso2,Caporaso}) indicates that the moduli spaces of higher genus curves can be given tropical structures much like Mikhalkin's from genus zero. Instead of being like Real manifolds, these spaces are like oribifolds.
\end{itemize}
\end{questions}

\section{Extensions}
\textit{Here we realize that most of this construction works if the class algebra is replaced by a general Frobenius algebra.}

\begin{dfn}
Let $V$ be a finite dimensional, unital algebra over a field $k$. Then $V$ is a \textbf{Frobenius algebra} over $k$ if $V$ has a non-degenerate bilinear pairing $h : V \times V \to k$ such that, for any triple of elements $a,b,c \in V$, 
$$h(ab,c) = h(a,bc).$$
\end{dfn}

\begin{thm}
The class algebra, $Z(\R[S_d])$, is a Frobenius algebra.
\end{thm}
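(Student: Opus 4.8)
The plan is to exhibit an explicit pairing built from the trace function $\tr$ and to verify the three requirements in the definition of a Frobenius algebra, leaning entirely on the structural facts already assembled in this section.

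First I would dispatch the algebra hypotheses. By Lemma \ref{basis}, $Z(\R[S_d])$ has the basis $\{K_\sigma \mid \sigma \mbox{ an integer partition of } d\}$, and there are only finitely many integer partitions of $d$, so $Z(\R[S_d])$ is a finite-dimensional $\R$-algebra. It is unital, with identity element $K_{(1^d)}$, the class of the identity permutation. Next I would propose the candidate pairing $h : Z(\R[S_d]) \times Z(\R[S_d]) \to \R$ defined by $h(a,b) = \tr(ab)$. Bilinearity is immediate: multiplication is bilinear and $\tr$ is linear, as already shown. The associativity condition $h(ab,c) = h(a,bc)$ comes essentially for free, since $\tr((ab)c) = \tr(a(bc))$ follows from associativity of multiplication in $\R[S_d]$; the pairing simply reads off the coefficient of the identity in the (unambiguous) triple product $abc$.

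The only substantive point is non-degeneracy, and here the orthogonality relations of Example \ref{conjsize} do all of the work. Evaluating $h$ on basis elements gives $h(K_\sigma, K_{\sigma'}) = \tr(K_\sigma K_{\sigma'})$, which equals $0$ when $\sigma \neq \sigma'$ and equals $|K_\sigma| \neq 0$ when $\sigma = \sigma'$. Thus the Gram matrix of $h$ with respect to the basis $\{K_\sigma\}$ is diagonal with nonzero diagonal entries $|K_\sigma|$, hence invertible, and therefore $h$ is non-degenerate. Having checked that $h$ is a non-degenerate bilinear pairing satisfying the associativity identity, we conclude that $Z(\R[S_d])$ is a Frobenius algebra over $\R$.

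I do not expect any genuine obstacle here: every ingredient is already in place (the linearity of $\tr$ and the orthogonality computation of Example \ref{conjsize}), so the ``hard part'' is merely recognizing that $\tr(ab)$ is the correct pairing and that the orthogonal basis is precisely what forces non-degeneracy. I would close with the observation that $h$ is in addition symmetric, since $Z(\R[S_d])$ is commutative and hence $\tr(ab) = \tr(ba)$, although symmetry is not required by the definition.
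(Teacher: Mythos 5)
Your proof is correct and takes the same approach as the paper: the identical pairing $h(a,b) = \tr(ab)$ with the identical verification of $h(ab,c) = \tr(abc) = h(a,bc)$. You are in fact more thorough than the paper's own proof, which never checks non-degeneracy; your observation that the Gram matrix is diagonal with nonzero entries $|K_\sigma|$, via the orthogonality computation of Example \ref{conjsize}, supplies exactly the step the paper leaves implicit.
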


\begin{proof}
Define the bilinear pairing $h : V \times V \to \R$ by:
$$h(g,g') = \tr(gg').$$
Note that if $g$, $g'$, and $g''$ are in the class algebra, then $h(gg',g'') = \tr(gg'g'') = h(g,g'g'')$, so $V$ is a Frobenius algebra.
\end{proof}

There are only a handful of aspects of our construction that depended on the class algebra.
\begin{enumerate}
\item The bilinear pairing gives the trace function, but there is not an obvious orthogonal basis for the Frobenius algebra.
\item Instead of choosing integer partitions for each edge of a topological type of tropical graph, we would choose basis vectors for each edge.
\item We already realized that $|K_\sigma| = \tr(K_\sigma K_\sigma)$. We could replace this quantity in the expressions above with $h(v,v)$ for a basis vector $v$. It is not at all clear what role these numbers play. If we instead replace each basis vector above by $K_\sigma \to \frac{K_\sigma}{\sqrt{|K_\sigma|}}$ in order to make the basis orthonormal, then the weights become
$$w(P_i) = \frac{1}{d!} \left(\prod_{\lambda_i \in \overline{\lambda}} \sqrt{|K_{\sigma_i}|} \right) \left(\prod_{v \in (V \setminus L(G))} I(v)\right).$$
The first two terms in this product no longer depend on the cone at all, but it is not clear what either term would mean in another Frobenius algebra.
\end{enumerate}

There are a few connections that we can make at this time.

\begin{remark}
Notice that the induction in the main theorem that rips off an internal vertex is really a special case of the famous Cut-and-Join formula. Also, notice that the vertex $\hat{v}$ corresponds classically to a copy of $\p^1$ with three special points. If you consider these points to be punctures, this object is the famous ``pair of pants" from a $2$-dimensional topological quantum field theory. The category of 2D-TQFTs is known to be equivalent to the category of Frobenius algebras. It is, however, not clear if there is any reasonable classical geometry interpretation of this construction in general as there was in the case of admissible covers.
\end{remark}


\baselineskip=15.5pt plus .5pt minus .2pt







\printindex

\nocite{*}      

\bibliographystyle{plain}  
\bibliography{biblio}        
\index{Bibliography@\emph{Bibliography}}%


\begin{vita}
\index{Vita@\emph{Vita}}%
Brian Paul Katz
\index{Brian Paul Katz}%
was born in Greensboro, NC, USA on November 26, 1980, the son of Jefferey David Katz and Laurie Ann North Katz. After completing his High School studies in Greensboro, NC in 1999, he entered Williams College in Williamstown, MA. He received a degree of B.\,A. in Mathematics, Music, and Chemistry cum laude with Honors from Williams College in 2003. In the fall of 2003 he started graduate studies in the department of Mathematics at The University of Texas at Austin where he was employed as a graduate research assistant and teaching assistant. In the fall of 2006 he became an assistant instructor in the same department. In the spring of 2009 he accepted a position as an assistant professor in the Department of Mathematics and Computer Science at Augustana College in Rock Island, Illinois, where he now teaches.
\end{vita}

\end{document}